\newtheorem{theorem}{Theorem}[section]
\newtheorem{corollary}[theorem] {Corollary}
\newtheorem{definition}[theorem]{Definition}
\newtheorem{lemma} [theorem]{Lemma}
\newtheorem{observation}[theorem]{Observation}
\newtheorem{proposition}[theorem]{Proposition}
\title{\bf {\sc Weak Integer Additive Set-Indexed Graphs: A Creative Review}}
\author{K P Chithra$^1$, K A Germina$^2$ and N K Sudev$^3$ \\ \\
$^1$ {\small Naduvath mana, Nandikkara, Thrissur-680301, India.} \\  {\small  email:{\em chithrasudev@gmail.com}}\\
$^2$ {\small Department of Mathematics, School of Mathematical \& Physical Sciences}\\{\small Central University of Kerala, Kasaragod-671316, India.}\\ {\small email:{\em srgerminaka@gmail.com}}\\
$^3$ {\small Department of Mathematics, Vidya Academy of Science \& Technology}\\{\small Thalakkottukara, Thrissur-680501, India.}\\ {\small email:{\em sudevnk@gmail.com}}
}
\date{}
\begin{document}
\maketitle

\begin{abstract}
For a non-empty ground set $X$, finite or infinite, the {\em set-valuation} or {\em set-labeling} of a given graph $G$ is an injective function $f:V(G) \to \mathcal{P}(X)$, where $\mathcal{P}(X)$ is the power set of the set $X$. A set-indexer of a graph $G$ is an injective set-valued function $f:V(G) \to \mathcal{P}(X)$ such that the function $f^{\ast}:E(G)\to \mathcal{P}(X)-\{\emptyset\}$ defined by $f^{\ast}(uv) = f(u ){\ast} f(v)$ for every $uv{\in} E(G)$ is also injective., where $\ast$ is a binary operation on sets. An integer additive set-indexer (IASI) is defined as an injective function $f:V(G)\to \mathcal{P}({\mathbb{N}_0})$ such that the induced function $g_f:E(G) \to \mathcal{P}(\mathbb{N}_0)$ defined by $g_f (uv) = f(u)+ f(v)$ is also injective, where $\mathbb{N}_0$ is the set of all non-negative integers and $\mathcal{P}(\mathbb{N}_0)$ is its power set. A weak IASI is an IASI $f$ such that $|f^+(uv)|= \text{max}(f(u),f(v))$. In this paper, we critically and creatively review the concepts and properties of weak integer additive set-valued graphs.
\end{abstract}
\textbf{Key words}: Set-indexers, integer additive set-indexers, weak integer additive set-indexers, weakly uniform integer additive set-indexers, sparing number of a graph.

\noindent \textbf{AMS Subject Classification : 05C78}

\section{Preliminaries}

For all  terms and definitions, not defined specifically in this paper, we refer to \cite{FH}. Unless mentioned otherwise, all graphs considered here are simple, finite and have no isolated vertices.

The researches on graph labeling problems commenced with the introduction of the concept of number valuations of graphs in \cite{AR1}. Since then, the studies on graph labeling have contributed significantly to the researches in graph theory and associated filelds. Graph labeling problems have numerous theoretical and practical applications. Many types of graph labelings are surveyed and listed in \cite{JAG}.

Motivated from various problems related to social interactions and social networks, Acharya introduced, in \cite{BDA1}, the notion of set-valuation of graphs analogous to the number valuations of graphs. For a non-empty ground set $X$, finite or infinite, the {\em set-valuation} or {\em set-labeling} of a given graph $G$ is an injective function $f:V(G) \to \mathcal{P}(X)$, where $\mathcal{P}(X)$ is the power set of the set $X$. 

Also, Acharya defined a {\em set-indexer} of a graph $G$ as an injective set-valued function $f:V(G) \to \mathcal{P}(X)$ such that the function $f^{\ast}:E(G)\to \mathcal{P}(X)-\{\emptyset\}$ defined by $f^{\ast}(uv) = f(u ){\ast} f(v)$ for every $uv{\in} E(G)$ is also injective, where $\mathcal{P}(X)$ is the set of all subsets of $X$ and $\ast$ is a binary operation on sets. 

Taking the symmetric difference of two sets as the operation  between two set-labels of the vertices of $G$, the following theorem was proved in \cite{BDA1}.

\begin{theorem}\label{T-SIG}
\cite{BDA1} Every graph has a set-indexer.
\end{theorem}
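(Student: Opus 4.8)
The plan is to prove the statement constructively, exhibiting an explicit set-indexer for an arbitrary graph $G$ using the symmetric difference operation, as signalled in the paragraph immediately preceding the statement. Write $V(G) = \{v_1, v_2, \ldots, v_n\}$ and take the ground set to be $X = \{1, 2, \ldots, n\}$. I would then define the vertex set-valuation by $f(v_i) = \{i\}$, so that each vertex receives a distinct singleton subset of $X$. Since distinct indices give distinct singletons, $f$ is injective, which settles the first requirement in the definition of a set-indexer.

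It remains to analyse the induced edge function $f^{\ast}$, where $f^{\ast}(uv) = f(u) \triangle f(v)$ and $\triangle$ denotes symmetric difference. For an edge $v_iv_j$ with $i \neq j$ one computes $f^{\ast}(v_iv_j) = \{i\} \triangle \{j\} = \{i, j\}$, a two-element set; in particular every edge label is non-empty, so $f^{\ast}$ indeed maps into $\mathcal{P}(X) - \{\emptyset\}$, as the definition demands.

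The one point that genuinely requires verification — and which I regard as the crux, modest as it is — is the injectivity of $f^{\ast}$. Suppose $f^{\ast}(v_iv_j) = f^{\ast}(v_kv_l)$ for two edges. Then $\{i, j\} = \{k, l\}$ as unordered pairs, and since $v_i \mapsto i$ is a bijection between vertices and indices, this forces $\{v_i, v_j\} = \{v_k, v_l\}$, i.e. the two edges coincide. Hence distinct edges receive distinct labels and $f^{\ast}$ is injective. The structural feature being exploited is that the symmetric difference of two distinct singletons records exactly the unordered pair of endpoints, so that the edge is recoverable from its label; this is precisely what makes the choice $\ast = \triangle$ convenient here (note, by contrast, that intersection would send every such edge to $\emptyset$ and violate the non-emptiness condition, so the choice of operation is not incidental).

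Since $G$ was arbitrary and the ground set $X$ was chosen of size $n = |V(G)|$, the construction produces a set-indexer for every graph, completing the argument. I do not anticipate any real obstacle beyond the injectivity check above; the only care needed is to make sure the index-to-vertex correspondence is fixed once and for all so that the recovery of an edge from $\{i,j\}$ is unambiguous.
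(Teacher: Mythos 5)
Your construction is correct and follows exactly the route the paper indicates: the paper states this result with only a citation to Acharya and the remark that symmetric difference is the operation used, and your assignment of distinct singletons $f(v_i)=\{i\}$, giving edge labels $\{i,j\}$ that are non-empty and recover the edge uniquely, is the standard realization of that idea. The injectivity check is complete as written (noting the graph is simple), so there is nothing to add.
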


During a personal communication with the second author, Acharya introduced the notion of integer additive set-indexers of graphs, using the concept of sum sets of two sets of non-negative integers and the definition was first appeared in \cite{GA} as follows. 

\begin{definition}\label{DEFN-1}{\rm
Let $\mathbb{N}_0$ be the set of all non-negative integers. An {\em integer additive set-labeling} (IASL, in short) is an injective function $f:V(G)\to \mathcal{P}(\mathbb{N}_0)$.

An {\em integer additive set-indexer} (IASI) is defined as an injective function $f:V(G)\to \mathcal{P}(\mathbb{N}_0)$ such that the induced function $g_f:E(G) \to \mathcal{P}(\mathbb{N}_0)$ defined by $f^+ (uv) = f(u)+ f(v)=\{a+b: a \in f(u), b \in f(v)\}$ is also injective. A graph $G$ which admits an IASI is called an integer additive set-indexed graph. 

An IASI is said to be {\em $k$-uniform} if $|f^+(e)| = k$ for all $e\in E(G)$. That is, a connected graph $G$ is said to have a $k$-uniform IASI if all of its edges have the same set-indexing number $k$.}
\end{definition}

If either $f(u)$ or $f(v)$ is countably infinite, then clearly their sum set will also be countably infinite and hence the study of the cardinality of the sum set $f(u)+f(v)$ becomes trivial. Hence, we restrict our discussion to finite sets of non-negative integers. We denote the cardinality of a set $A$ by $|A|$. 

The cardinality of the set-label of an element (vertex or edge) of a graph $G$ is called the {\em set-indexing number} of that element. If the set-labels of all vertices of $G$ have the same cardinality, then the vertex set $V(G)$ is said to be {\em uniformly set-indexed}. An element is said to be {\em mono-indexed} if its set-indexing number is $1$.

The bounds for the set-indexing numbers of edges of an IASI graph was estimated in the following lemma.

\begin{lemma}\label{L-IASI3}
\cite{GS1} If $f$ is an IASI of a graph $G$, then $\text{max}(|f(u)|, |f(v)|) \le |f^+(uv)|= |f(u)+f(v)| \le |f(u)| |f(v)|$, where $u,v \in V(G)$.
\end{lemma}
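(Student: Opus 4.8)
The plan is to prove the two inequalities separately, treating $f(u)+f(v)$ as a sumset of two finite sets of non-negative integers. Write $A = f(u)$ and $B = f(v)$ with $|A|=m$ and $|B|=n$, so that $A+B = \{a+b : a\in A, b\in B\}$. The object is to bound $|A+B|$ both below by $\max(m,n)$ and above by $mn$.

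For the upper bound, the approach is a simple counting argument: every element of $A+B$ arises as a sum $a+b$ with $a\in A$ and $b\in B$, so $A+B$ is the image of the product set $A\times B$ under the addition map. Since $|A\times B| = mn$, and the image of a set under any function has cardinality at most that of the domain, we immediately get $|A+B|\le mn$. No injectivity is needed here; distinct pairs may collapse to the same sum, which is precisely why the bound can be strict.

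For the lower bound $|A+B|\ge \max(m,n)$, the plan is to fix the smaller set and translate the larger one. Suppose without loss of generality that $n = |B|\ge m = |A|$; pick any fixed element $a_0\in A$. Then the translate $a_0 + B = \{a_0+b : b\in B\}$ is a subset of $A+B$, and translation by the constant $a_0$ is injective, so $|a_0+B| = |B| = n$. Hence $|A+B|\ge n = \max(m,n)$. The symmetric argument fixing an element of $B$ handles the case $m\ge n$, so in all cases $|A+B|\ge \max(m,n)$.

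I do not anticipate any serious obstacle here, as both bounds are elementary facts about sumsets; the only point requiring a little care is making the lower-bound argument symmetric, i.e. ensuring one translates the larger set by a single element of the smaller one so that the resulting translate is genuinely a subset of $A+B$ of the full size $\max(m,n)$. Combining the two displayed inequalities yields $\max(|f(u)|,|f(v)|)\le |f^+(uv)| \le |f(u)|\,|f(v)|$, as required.
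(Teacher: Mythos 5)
Your proof is correct: the upper bound follows because $A+B$ is the image of $A\times B$ under the addition map, and the lower bound follows since any translate $a_0+B$ (or $A+b_0$) is a full-sized subset of $A+B$, translation being injective on the integers. The paper itself states this lemma without proof, citing \cite{GS1}, and your argument is precisely the standard one underlying that cited result, so there is nothing to add.
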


Further studies on integer additive set-indexers have been progressed on the basis of this lemma and the set-indexing numbers of the elements of given IASI graphs. In the following discussions, we report the properties and characteristics of a special type IASI graphs, the set-label of whose elements follow a definite pattern.

\section{WIASI of Graphs}

\subsection{Introduction}

The IASIs induces the minimum and maximum possible set-indexing numbers for the edges of given graphs are of special interest. Hence, we defined

\begin{definition}{\rm
\cite{GS1,GS2}A {\em weak integer additive set-indexer} (WIASI) of a graph $G$ is an IASI $f$ such that $|f^+(uv)| = max(|f(u)|,|f(v)|)$ for all $u, v \in V(G)$ and a {\em strong integer additive set-indexer} (SIASI) of $G$ is an IASI such that if $|g_f(uv)| = |f(u)|\,|f(v)|$ for all $u, v \in V(G)$.} 
\end{definition}

The second and third authors of this paper have studied the properties and characteristics of WIASIs and the requirements for certain graphs to be a WIASI graphs.

It was observed that if a graph $G$ admits a WIASI, then at least one end vertex of every edge of $G$ is mono-indexed. If two end vertices of an edge of $G$ are mono-indexed, then that edge will also be mono-indexed. The minimum number of mono-indexed edges possible in a WIASI graph is defined to be the {\em sparing number} of that grph. The {\em sparing number} of a graph $G$ is denoted by $\varphi(G)$.

An obvious result about an IASI $f$ of a given graph $G$ is the restriction of it to the substructures of $G$ which led us to the following result. A property is said to be {\em hereditary} if existence of that property for an object implies its existence for any subobject of that object.

\begin{proposition}\label{P-WIASI2}
\cite{GS0} If a given graph $G$ admits a WIASI $f$, then any subgraph $H$ of $G$ also admits a WIASI $f^*$, which is the restriction of $f$ to $V(H)$. That is, the existence of WIASI is a hereditary property.
\end{proposition}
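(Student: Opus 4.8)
The plan is to verify directly that the restriction $f^*$ inherits each of the three defining properties of a WIASI, exploiting the fact that a subgraph $H$ satisfies $V(H) \subseteq V(G)$ and $E(H) \subseteq E(G)$. Because all three conditions --- injectivity on vertices, injectivity of the induced edge map, and the weak cardinality equation --- are stated pointwise on the individual elements of the graph, each one should descend to the subgraph with essentially no further work. First I would observe that $f^*$ is injective on $V(H)$: since $f$ is injective on the larger set $V(G)$ and $V(H) \subseteq V(G)$, the restriction of an injective map to a subset of its domain remains injective.

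Next I would check that the induced edge function of $f^*$ is injective on $E(H)$. The key point is that for every edge $uv \in E(H)$ we have $u, v \in V(H) \subseteq V(G)$, so $(f^*)^+(uv) = f^*(u) + f^*(v) = f(u) + f(v) = f^+(uv)$; thus $(f^*)^+$ is precisely the restriction of $f^+$ to $E(H) \subseteq E(G)$. Since $f^+$ is injective on all of $E(G)$, its restriction to $E(H)$ is again injective, which establishes that $f^*$ is an IASI of $H$.

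Finally I would verify the weak condition. For each $uv \in E(H)$, using the identity $(f^*)^+(uv) = f^+(uv)$ together with $f^*(u) = f(u)$ and $f^*(v) = f(v)$, the WIASI property of $f$ yields $|(f^*)^+(uv)| = |f^+(uv)| = \max(|f(u)|, |f(v)|) = \max(|f^*(u)|, |f^*(v)|)$, so $f^*$ is indeed a WIASI of $H$. There is no genuine obstacle in this argument: it rests entirely on the single structural observation that passing to a subgraph only deletes vertices and edges, while every defining condition of a WIASI is a local, per-element statement and therefore survives restriction. The only point deserving a moment's care is the middle step, namely confirming that the induced edge map of the restriction coincides with the restriction of the induced edge map --- but this is immediate once one notes that both endpoints of every edge of $H$ already lie in $V(H)$.
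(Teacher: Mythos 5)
Your proof is correct and is exactly the restriction argument the paper has in mind (the paper states the result as an ``obvious'' consequence of restricting $f$ to a substructure, citing \cite{GS0} without spelling out details, and your three checks --- injectivity of $f^*$ on $V(H)$, the identity $(f^*)^+ = f^+|_{E(H)}$, and the pointwise weak cardinality condition --- are precisely what that sketch requires). You even handle the one delicate point, that the induced edge map of the restriction agrees with the restriction of the induced edge map, so nothing is missing.
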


The most important characteristic of a WIASI graph is the following.

\begin{theorem}\label{T-WIASI3}
\cite{GS3} A graph $G$ admits a WIASI if and only if $G$ is bipartite or $G$ has at least one mono-indexed.
\end{theorem}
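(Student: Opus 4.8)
The plan is to establish the two implications separately, leaning on the stated observation that in any WIASI of $G$ at least one end vertex of every edge is mono-indexed, together with the hereditary Proposition~\ref{P-WIASI2}. For the forward implication, suppose $f$ is a WIASI of $G$ and that $G$ is \emph{not} bipartite; I must produce a mono-indexed edge. Call a vertex $v$ \emph{heavy} if $|f(v)|\ge 2$ and mono-indexed otherwise, so every vertex is of exactly one type. Since every edge has a mono-indexed end, no edge can join two heavy vertices, i.e. the heavy vertices form an independent set. Because $G$ is non-bipartite it contains an odd cycle $C$, and by Proposition~\ref{P-WIASI2} the restriction of $f$ to $C$ is again a WIASI, so the same constraint holds on $C$. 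Now assume for contradiction that $C$ has no mono-indexed edge. An edge of $C$ cannot join two heavy vertices (they are independent) and cannot join two mono-indexed vertices (that would be a mono-indexed edge, since the sum of two singletons is a singleton), so every edge of $C$ runs between a heavy vertex and a mono-indexed one. This exhibits a proper $2$-colouring of $C$, contradicting the fact that $C$ is odd. Hence $C$, and therefore $G$, carries a mono-indexed edge.

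For the converse I construct a suitable WIASI whenever $G$ is bipartite; the other alternative, that $G$ already admits a WIASI possessing a mono-indexed edge, requires nothing further. Let $V(G)=V_1\cup V_2$ be a bipartition. I assign to the vertices of $V_1$ pairwise distinct singletons and to the vertices of $V_2$ pairwise distinct sets of cardinality at least $2$, all drawn from $\mathbb{N}_0$ so that $f$ is injective. For any edge $uv$ with, say, $u\in V_1$, the label $f(u)=\{a\}$ is a singleton, so $f^+(uv)=\{a\}+f(v)$ is a translate of $f(v)$; hence $|f^+(uv)|=|f(v)|=\max(|f(u)|,|f(v)|)$, consistent with the lower bound of Lemma~\ref{L-IASI3}, and the weak condition holds on every edge. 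Note that this labelling in fact produces a WIASI with \emph{no} mono-indexed edge, which is exactly what makes the dichotomy of the statement meaningful: bipartiteness yields a mono-indexed-edge-free WIASI, while the necessity argument above shows non-bipartiteness forces one.

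The main obstacle is not the short parity step but the injectivity bookkeeping in the converse: one must simultaneously keep $f$ injective, keep the induced sum sets $f(u)+f(v)$ pairwise distinct over all edges, and preserve the max-condition, all while using multi-element labels on $V_2$. I expect to resolve this by a greedy choice of labels—ordering the vertices and, at each stage, selecting a label (a singleton for $V_1$, a two-element set for $V_2$) whose entries avoid the finitely many sums already realized—so that no two edges can collide; this is possible because $E(G)$ is finite whereas $\mathbb{N}_0$ is infinite. With this selection in hand, $g_f$ is injective, $f$ is a genuine WIASI, and the two implications combine to give the asserted equivalence.
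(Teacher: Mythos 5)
Your proof is correct and takes essentially the same route as the paper's sketch: the same bipartite construction (distinct singletons on one partition, distinct non-singleton sets on the other, so every edge's sum set is a translate of the non-singleton label), and the same parity obstruction for necessity, since vertices with non-singleton labels form an independent set and a WIASI without mono-indexed edges would force a bipartition. Your detour through an odd cycle via Proposition~\ref{P-WIASI2}, and your greedy bookkeeping to secure injectivity of the induced edge function (a detail the paper's two-line sketch silently omits), are harmless refinements of the same argument rather than a different approach.
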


The proof of this theorem follows from the fcts that  $G$ is a bipartite graph, then all vertices in one partition can be labeled by distinct singleton sets and all vertices in the other partition can be labeled by non-singleton sets and if $G$ is not bipartite, some of the mono-indexed are adjacent. Hence,

\begin{observation}\label{O-WIASI4}
\cite{GS3} The sparing number of a bipartite graph is $0$.
\end{observation}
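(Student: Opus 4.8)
The plan is to exhibit, for an arbitrary bipartite graph $G$, a single WIASI under which no edge is mono-indexed. Since the sparing number $\varphi(G)$ is defined as the minimum number of mono-indexed edges over all WIASIs of $G$, and this quantity is trivially non-negative, producing one such labeling immediately forces $\varphi(G) = 0$.

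First I would fix a bipartition $V(G) = V_1 \cup V_2$ and assign to each vertex of $V_1$ a distinct singleton subset of $\mathbb{N}_0$, and to each vertex of $V_2$ a distinct subset of $\mathbb{N}_0$ of cardinality at least $2$. This is precisely the labeling scheme already invoked in the proof of Theorem~\ref{T-WIASI3}, so the existence of a WIASI of this shape is available to us; what remains is to confirm the weak cardinality condition and, crucially, to check that no edge becomes mono-indexed.

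Next I would verify the weak condition $|f^+(uv)| = \max(|f(u)|,|f(v)|)$ edge by edge. The key point is that every edge $uv$ joins $V_1$ to $V_2$, so exactly one endpoint, say $u$, carries a singleton label $f(u) = \{a\}$. Then $f^+(uv) = \{a\} + f(v) = \{a+b : b \in f(v)\}$, and since the translation $b \mapsto a+b$ is injective, we get $|f^+(uv)| = |f(v)| = \max(1,|f(v)|) = \max(|f(u)|,|f(v)|)$. Thus the weak condition holds automatically on every edge. Moreover $|f^+(uv)| = |f(v)| \ge 2 > 1$, so no edge is mono-indexed under $f$.

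The only genuine care required is in guaranteeing that $f$ and the induced $f^+$ are both injective while respecting the partition constraint; this is where the main work lies, though it is routine given the infinitude of $\mathcal{P}(\mathbb{N}_0)$. For instance, one may draw the singleton labels and the non-singleton labels from widely separated blocks of $\mathbb{N}_0$ so that distinct edges yield distinct sum sets, thereby securing injectivity of $f^+$. Having thus exhibited a WIASI of $G$ with zero mono-indexed edges, we obtain $\varphi(G) \le 0$, and combined with non-negativity this gives $\varphi(G) = 0$.
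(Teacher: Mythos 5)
Your proposal is correct and follows essentially the same route as the paper: the paper's justification for this observation is precisely the labeling scheme used for Theorem~\ref{T-WIASI3}, assigning distinct singleton sets to one partition class and distinct non-singleton sets to the other, so that no edge has both endpoints mono-indexed and hence $\varphi(G)=0$. Your added verification of the weak condition via translation injectivity and the care about injectivity of $f$ and $f^+$ simply make explicit what the paper leaves implicit.
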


Invoking Theorem \ref{T-WIASI3}, the possible number of mono-indexed edges in a cycle $C_n$ of length $n$.

\begin{observation}\label{O-WIASI5}
\cite{GS3} A cycle $C_n$ has odd number of mono-indexed edges if $n$ is odd and even number of mono-indexed edges if $n$ is even.
\end{observation}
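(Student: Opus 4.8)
The plan is to reduce the statement to a short double-counting argument on the vertices of the cycle, using the structural consequences of the WIASI condition recorded just before Proposition \ref{P-WIASI2}: an edge is mono-indexed if and only if both of its end vertices are mono-indexed, and moreover every edge must have at least one mono-indexed end vertex.

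First I would fix a WIASI $f$ of $C_n$ with vertices $v_1,v_2,\ldots,v_n$ and classify each vertex according to whether it is mono-indexed ($|f(v_i)|=1$) or not. Since $|f^+(uv)|=\max(|f(u)|,|f(v)|)$, an edge $v_iv_{i+1}$ is mono-indexed, i.e.\ $|f^+(v_iv_{i+1})|=1$, precisely when $|f(v_i)|=1$ and $|f(v_{i+1})|=1$; hence the mono-indexed edges are exactly those joining two mono-indexed vertices. At the same time, the WIASI requirement that at least one end vertex of every edge be mono-indexed means that no edge of $C_n$ joins two non-mono-indexed vertices.

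Next I would count the incidences between mono-indexed vertices and edges in two ways. Let $m$ be the number of mono-indexed vertices, $a$ the number of mono-indexed edges (both ends mono-indexed), and $b$ the number of edges with exactly one mono-indexed end. Since every vertex of $C_n$ has degree $2$, summing the degrees of the mono-indexed vertices gives $2m$, whereas counting the same incidences edge by edge gives $2a+b$; thus $2a+b=2m$. Because no edge has both ends non-mono-indexed, each of the $n$ edges is counted in $a$ or in $b$, so $a+b=n$. Eliminating $b$ yields $a=2m-n$.

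Finally I would read off the parity: since $2m$ is even, $a=2m-n\equiv n\pmod 2$, so the number $a$ of mono-indexed edges is odd exactly when $n$ is odd and even exactly when $n$ is even, which is the assertion. The only step needing care is the identity $a+b=n$, which rests on the fact that the WIASI condition forbids an edge with both ends non-mono-indexed; once that is in place the remainder is routine arithmetic. An alternative route counts the adjacent mono/non-mono changes around the cyclic pattern of vertices and uses that such changes occur an even number of times, but the incidence count above is cleaner and sidesteps the degenerate case in which every vertex is mono-indexed.
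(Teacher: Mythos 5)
Your proof is correct, and it takes a genuinely different route from the paper. The paper obtains Observation~\ref{O-WIASI5} by invoking the characterisation in Theorem~\ref{T-WIASI3}: around the cycle one labels vertices alternately by singleton and non-singleton sets, bipartiteness (equivalently, evenness of $n$) decides whether the alternation closes up, and the parity of the count of mono-indexed edges is read off from how the alternating pattern must break. You instead bypass Theorem~\ref{T-WIASI3} entirely and use only the two local facts about a WIASI (an edge is mono-indexed iff both ends are, and no edge joins two non-mono-indexed vertices), then double-count incidences between mono-indexed vertices and edges: with $m$ mono-indexed vertices, $a$ mono-indexed edges and $b$ mixed edges, the identities $2a+b=2m$ and $a+b=n$ give the exact formula $a=2m-n$, from which the parity claim is immediate since $2m$ is even. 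Your argument is the more informative one: it yields the precise count $a=2m-n$ for \emph{every} WIASI (not just a parity statement), and combined with the fact that the mono-indexed vertices form a vertex cover, so $m\ge\lceil n/2\rceil$, it even recovers $a\ge 1$ for odd $n$, consistent with Observation~\ref{O-WIASI7}. What the paper's route buys in exchange is uniformity: the same characterisation theorem drives the sparing-number results for whole graph classes at once, whereas your incidence count exploits $2$-regularity and is specific to cycles. Both identities you rely on ($a+b=n$ resting on the absence of edges with two non-mono-indexed ends, and $2a+b=2m$ resting on every vertex of $C_n$ having degree $2$) are justified exactly where you flag them, so there is no gap.
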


An obvious result due to Theorem \ref{T-WIASI3} is

\begin{theorem}\label{T-WIASI6}
Every graph admits a weak integer additive set-indexer.
\end{theorem}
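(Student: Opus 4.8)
The plan is to produce, for an arbitrary graph $G$, an explicit weak integer additive set-indexer, thereby establishing the claim constructively; I will also indicate how the statement drops out of Theorem~\ref{T-WIASI3}. The guiding idea is that the simplest conceivable WIASI is one in which \emph{every} vertex is mono-indexed, since then the weak condition $|f^+(uv)|=\max(|f(u)|,|f(v)|)$ reduces to $1=\max(1,1)$ on every edge and is satisfied automatically.

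Concretely, I would enumerate the vertices as $v_1,v_2,\dots,v_n$ and set $f(v_i)=\{2^{i}\}$ for each $i$. First I would check that $f$ is a legitimate set-labeling: the singletons $\{2^i\}$ are pairwise distinct, so $f$ is injective into the finite subsets of $\mathbb{N}_0$. Next, for any edge $v_iv_j$ one has $f^+(v_iv_j)=\{2^i+2^j\}$, a singleton, so $|f^+(v_iv_j)|=1=\max(|f(v_i)|,|f(v_j)|)$ and the weak condition holds on every edge. It then remains only to verify that $f^+$ is injective on $E(G)$, since this is precisely what upgrades the labeling from a mere set-labeling to an IASI.

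The one point requiring an actual argument --- and hence the main obstacle --- is this edge-injectivity, namely that distinct edges receive distinct sums. This is exactly the demand that the chosen vertex values form a Sidon set (a $B_2$-set), and the powers of two are the canonical such set: if $2^i+2^j=2^k+2^l$ with $i<j$ and $k<l$, then uniqueness of the binary representation forces $\{i,j\}=\{k,l\}$, whence $v_iv_j=v_kv_l$. Since $G$ is simple and loopless, the two endpoints of any edge carry different exponents, so this dichotomy applies and $f^+$ is injective. Therefore $f$ is an IASI, and by the previous paragraph it is weak; this settles the theorem independently of the earlier results.

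Finally, I would record the promised reduction to Theorem~\ref{T-WIASI3}. The standing convention in this paper is that every graph has no isolated vertices, so $E(G)\neq\emptyset$; choosing any single edge and mono-indexing its two ends realizes ``at least one mono-indexed edge,'' so the right-hand disjunct of the characterization is met by \emph{every} graph. Thus the theorem is immediate from Theorem~\ref{T-WIASI3}, while the explicit labeling $f(v_i)=\{2^i\}$ above makes the underlying construction fully transparent.
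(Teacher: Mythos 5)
Your construction is correct, but it is a genuinely different route from the paper's. The paper disposes of this theorem in one line, citing Theorem~\ref{T-WIASI3}: every graph is either bipartite (label one part by singletons, the other by non-singleton sets) or can be given a labeling with mono-indexed adjacent vertices, so the characterization is always satisfiable. You instead build an explicit, self-contained WIASI from scratch: label all vertices by the singletons $\{2^i\}$, observe that the weak condition degenerates to $1=\max(1,1)$ on every edge, and prove edge-injectivity via the Sidon property of powers of two (uniqueness of binary representation forces $\{i,j\}=\{k,l\}$ from $2^i+2^j=2^k+2^l$, and simplicity of $G$ guarantees $i\neq j$). This argument is complete and does not lean on any earlier result; it even exhibits a $1$-uniform IASI, which is simultaneously weak and strong in the paper's terminology. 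What your approach buys is transparency and independence --- the theorem stands even for a reader who has not absorbed the characterization --- at the cost of producing a labeling in which \emph{every} edge is mono-indexed, so it says nothing about the sparing number, whereas the paper's route situates the theorem inside the bipartite/mono-indexed dichotomy that drives the rest of the sparing-number theory. Your closing reduction to Theorem~\ref{T-WIASI3} is essentially the paper's own proof, so you have in effect supplied both arguments; just note that the characterization as stated is about the existence of a labeling realizing the mono-indexed edge, which your explicit construction is precisely what certifies.
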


\subsection{Sparing Number of Graphs}

As all graphs are WIASI graphs, our interest is to determine the sparing number of a WIASI graph. We call the WIASI of a graph $G$, which yields the least possible number of mono-indexed edges, the {\em primary WIASI}.  Unless mentioned otherwise, the weak IASI of a graph, mentioned during the study about sparing number of various graphs and graph classes, is the primary weak IASI.

Due to Observation \ref{O-WIASI4}, the sparing number of paths, trees and even cycles is $0$. For odd cycles, if we label alternate vertices by distinct singleton vertices and distinct non-singleton vertices in such a way that no two adjacent vertices have non-singleton set-labels, then there exists exactly one mono-indexed egde. Hence,

\begin{observation}\label{O-WIASI7}
The sparing number of an odd cycle is $1$.
\end{observation}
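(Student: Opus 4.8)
The plan is to prove the two inequalities $\varphi(C_n)\ge 1$ and $\varphi(C_n)\le 1$ separately for an odd cycle $C_n$.

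For the lower bound I would invoke Observation \ref{O-WIASI5}: under any WIASI the number of mono-indexed edges of an odd cycle is odd, and an odd non-negative integer is at least $1$, so every WIASI of $C_n$ has at least one mono-indexed edge, giving $\varphi(C_n)\ge 1$. The same conclusion can be reached via Theorem \ref{T-WIASI3}, since an odd cycle is not bipartite and must therefore realise the second alternative, namely the presence of at least one mono-indexed edge.

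For the upper bound I would exhibit a WIASI with exactly one mono-indexed edge. Write $C_n=v_1v_2\cdots v_nv_1$ with $n=2k+1$. Assign distinct singleton set-labels to the odd-indexed vertices $v_1,v_3,\dots,v_{2k+1}$ and distinct non-singleton set-labels to the even-indexed vertices $v_2,v_4,\dots,v_{2k}$, keeping all vertex labels mutually distinct so that $f$ is injective. Each edge $v_iv_{i+1}$ with $1\le i\le n-1$ then joins a singleton-labeled vertex and a non-singleton-labeled vertex; since the sumset of a singleton $\{a\}$ with a set $B$ is the translate $a+B$, Lemma \ref{L-IASI3} forces $|f^+(v_iv_{i+1})|=|B|=\max(|f(v_i)|,|f(v_{i+1})|)$, so the weak condition holds on these edges and none of them is mono-indexed. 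Because $n$ is odd, the end vertices $v_n=v_{2k+1}$ and $v_1$ of the remaining edge are both odd-indexed and hence both singleton-labeled, so the single edge $v_nv_1$ is mono-indexed and satisfies $|f^+(v_nv_1)|=1=\max(1,1)$. Thus the labeling is a WIASI of $C_n$ with precisely one mono-indexed edge, whence $\varphi(C_n)\le 1$, and combining the bounds gives $\varphi(C_n)=1$.

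The step requiring the most care is verifying that this labeling is genuinely an IASI, that is, that the induced edge map $f^+$ is injective. The weak/mono-indexed structure above holds for every admissible choice of integers, but distinctness of the edge sumsets is not automatic. I would resolve this by choosing the integers appearing in the vertex labels to be sufficiently generic, for instance spacing them so that they grow geometrically (a Sidon-type choice), which forces the sets $f(v_i)+f(v_{i+1})$ to be pairwise distinct and completes the verification that $f$ is a WIASI. This is the main, though essentially routine, obstacle; everything else follows directly from the cited results.
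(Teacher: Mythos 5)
Your proof is correct and takes essentially the same route as the paper: the lower bound follows from non-bipartiteness via Theorem \ref{T-WIASI3} (or the parity statement in Observation \ref{O-WIASI5}), and the upper bound uses exactly the alternating singleton/non-singleton labeling of an odd cycle that the paper sketches just before the observation. Your extra care in verifying injectivity of $f^+$ by a generic (Sidon-type) choice of integers fills in a detail the paper leaves implicit, but it is a refinement of the same argument rather than a different approach.
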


Under a primary WIASI of a complete graph $K_n$, only one vertex of $K_n$ can be labeled by no-singleton set, as all vertices of $K_n$ are adjacent to each other. As a result, we have

\begin{theorem}\label{T-WIASI8}
\cite{GS3} The sparing number of a complete graph $K_n$ is $\frac{1}{2}(n-1)(n-2)$.
\end{theorem}

\begin{proposition}\label{P-WIASI8a}
\cite{GS6} The sparing number of a complete graph $K_n$ is equal to the number of triangles which contain the vertex that is not mono-indexed in $K_n$.
\end{proposition}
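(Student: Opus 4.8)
The plan is to reinterpret the count already established in Theorem~\ref{T-WIASI8} combinatorially, by exhibiting a bijection between the mono-indexed edges of $K_n$ under a primary WIASI and the triangles of $K_n$ that pass through the unique non-mono-indexed vertex. To begin, I would recall the structural fact underlying Theorem~\ref{T-WIASI8}: since every pair of vertices of $K_n$ is adjacent and a WIASI forces at least one end of each edge to be mono-indexed, at most one vertex can carry a non-singleton set-label. A primary (that is, mono-indexed-edge-minimizing) WIASI therefore leaves exactly one vertex, say $v$, non-mono-indexed, while the remaining $n-1$ vertices are all mono-indexed.

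Next I would characterize the mono-indexed edges precisely. Because $f$ is a WIASI we have $|f^+(uv)| = \max(|f(u)|,|f(v)|)$, so an edge $uv$ is mono-indexed if and only if $\max(|f(u)|,|f(v)|)=1$, i.e.\ if and only if \emph{both} $u$ and $v$ are mono-indexed. As $v$ is the only non-mono-indexed vertex, no edge incident with $v$ is mono-indexed, whereas every edge joining two of the other $n-1$ vertices is mono-indexed. Hence the set of mono-indexed edges is exactly the edge set of the $K_{n-1}$ induced on $V(K_n)\setminus\{v\}$.

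I would then define the map sending each mono-indexed edge $ab$ (with $a,b\neq v$) to the triangle $\{v,a,b\}$. Each triangle of $K_n$ that contains $v$ has exactly this form for a unique unordered pair $\{a,b\}$, and conversely each mono-indexed edge produces precisely one such triangle; the map is therefore a bijection. Counting either side gives $\binom{n-1}{2}=\tfrac{1}{2}(n-1)(n-2)$, so the number of mono-indexed edges (the sparing number, by Theorem~\ref{T-WIASI8}) equals the number of triangles through $v$, as claimed.

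The result is essentially a counting reinterpretation rather than a deep statement, so I do not expect a serious obstacle. The one point requiring care is the first step, namely justifying that a primary WIASI really does leave only a single non-mono-indexed vertex; but this is already part of the reasoning behind Theorem~\ref{T-WIASI8} and may simply be invoked, after which the bijection and the identity $\binom{n-1}{2}=\tfrac{1}{2}(n-1)(n-2)$ complete the argument.
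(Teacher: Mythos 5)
Your proof is correct and follows the same route the paper intends: it uses the structural fact behind Theorem~\ref{T-WIASI8} (exactly one non-mono-indexed vertex $v$ in a primary WIASI of $K_n$) and identifies the $\binom{n-1}{2}$ mono-indexed edges of the induced $K_{n-1}$ with the triangles through $v$. The paper states this proposition without proof (citing \cite{GS6}), but your bijective counting argument is precisely the reinterpretation of Theorem~\ref{T-WIASI8} that the surrounding text suggests.
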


A solution to the question regarding the sparing number of an arbitrary graph $G$ is obtained from Theorem \ref{T-WIASI3} as 

\begin{theorem}\label{T-WIASI9}
\cite{GS5} Let $G$ be a WIASI graph and let $E'$ be a minimal subset of $E(G)$ such that $G-E'$ is bipartite. Then, $\varphi(G)=|E'|$. 
\end{theorem}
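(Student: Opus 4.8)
The plan is to translate the WIASI condition into a purely structural statement about which vertices carry non-singleton set-labels, and then match that structure against an edge-deletion that produces a bipartite graph. First I would record the dichotomy underlying the whole theory of the sparing number: under any WIASI $f$ of $G$, if $uv\in E(G)$ then at least one of $f(u),f(v)$ is a singleton, since if both had cardinality $\ge 2$ the sum set would satisfy $|f(u)+f(v)|\ge |f(u)|+|f(v)|-1 > \max(|f(u)|,|f(v)|)$, contradicting weakness. Consequently the set $S$ of non-mono-indexed vertices is independent, its complement $M=V(G)\setminus S$ is a vertex cover, and an edge is mono-indexed \emph{precisely} when both ends lie in $M$. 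This gives the working identity $\varphi(G)=\min\{\,|E(G[M])| : M \text{ a vertex cover of } G\,\}$, which I would take as the engine of the argument.

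For the inequality $\varphi(G)\ge |E'|$, I would start from a primary WIASI, let $M$ be its set of mono-indexed vertices and $F=E(G[M])$ its mono-indexed edges, so $|F|=\varphi(G)$. Deleting $F$ destroys exactly the edges inside $M$; since $S=V(G)\setminus M$ is independent it carries no edges, so every surviving edge of $G-F$ runs between $M$ and $S$. Hence $(M,S)$ is a bipartition of $G-F$, so $G-F$ is bipartite and $F$ is an edge set of the required type; passing to a suitable minimal subset realising bipartiteness then yields $|E'|\le \varphi(G)$.

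For the reverse inequality I would run the construction backwards: given a minimal $E'$ with $G-E'$ bipartite under a bipartition $(A,B)$, I would declare one colour class to be the non-mono-indexed set, label it with distinct non-singleton sets and the remaining vertices with distinct singletons. Since every edge then has a mono-indexed end, weakness $|f^+(uv)|=\max(|f(u)|,|f(v)|)$ holds automatically (by Lemma~\ref{L-IASI3}, translating a set by a single integer preserves its cardinality), and a sufficiently spread-out choice of labels makes both $f$ and the induced map $f^+$ injective. The mono-indexed edges of the resulting WIASI are then exactly those interior to the singleton class, and I would argue this count is $|E'|$.

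The main obstacle is precisely this last step, and it is where the word ``minimal'' must do real work. A generic bipartition $(A,B)$ of $G-E'$ need not have either colour class independent in $G$ (the edges of $E'$ live \emph{inside} the classes), whereas the non-mono-indexed vertices of a WIASI are forced to be independent; so one cannot blindly set $S=A$ or $S=B$. The crux is to show that $E'$ — or the orientation of the bipartition — can be chosen so that one colour class is genuinely independent in $G$, at which point $E'$ coincides with the edges inside the complementary vertex cover and the vertex-cover identity closes the loop. I expect this to require restricting to the ``right'' minimal $E'$, namely those cut out by a bipartition with an independent side, or proving that such a choice always attains the minimum. Indeed, a direct check on $K_4$, where $\varphi(K_4)=3$ while a two-edge matching already makes $K_4$ bipartite, shows that a bare minimum-cardinality reading of $E'$ is too weak and that this independence requirement is the genuine content of the theorem.
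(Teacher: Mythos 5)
Your structural analysis is correct and, frankly, more careful than anything the paper offers: the paper gives no proof of Theorem~\ref{T-WIASI9} at all, only the attribution to \cite{GS5} and the one-line remark that it is ``obtained from Theorem~\ref{T-WIASI3}''. That remark yields at best your first direction: the sumset bound $|f(u)+f(v)|\ge |f(u)|+|f(v)|-1$ forces every edge to have a singleton end, so the non-mono-indexed vertices form an independent set $S$, the mono-indexed edges are exactly those inside the cover $M=V(G)\setminus S$, deleting them leaves a graph bipartitioned by $(M,S)$, and hence a minimal bipartizing set satisfies $|E'|\le\varphi(G)$. Your working identity $\varphi(G)=\min\{|E(G[M])| : M \text{ a vertex cover of } G\}$ is also correct, and is the right engine.

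The obstacle you isolate in the converse direction is not a defect of your attempt but of the statement itself: as literally written (whether ``minimal'' is read as inclusion-minimal or as minimum cardinality) the theorem is false, and your $K_4$ check is a valid counterexample. Deleting a perfect matching from $K_4$ leaves $C_4$, so an inclusion-minimal, indeed minimum, bipartizing set has $|E'|=2$, while $\varphi(K_4)=3$ by Theorem~\ref{T-WIASI8}; the failure occurs precisely because neither class of the resulting bipartition of $C_4$ is independent in $K_4$, so no WIASI realizes that deletion. The correct formulation is the one your vertex-cover identity supplies: $E'$ must range over edge sets interior to a vertex cover, equivalently over deletions whose residual bipartition has one class independent in $G$. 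Your corrected version is also the one consistent with the paper's other results (Theorem~\ref{T-WIASI14} and the formula $\varphi(G)=|E(G)|-\sum_{v\in I}d(v)$ for a maximum independent set $I$), whereas Observation~\ref{O-WIASI9a} inherits the same error: the maximal bipartite subgraph of $K_4$ has $4$ edges, not $|E(K_4)|-\varphi(K_4)=3$. In short, everything you prove is proved correctly, the last step cannot be completed because the claim as stated is untrue, and there is no proof in the paper against which to weigh your route --- your counterexample is the substantive contribution here.
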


Invoking Theorem \ref{T-WIASI9}, it is obvious that the sparing number of a bipartite graph $G$ is the difference  between the sizes of $G$ and its maximal bipartite subgraph. Hence, we have

\begin{observation}\label{O-WIASI9a}
The number of edges in a maximal bipartite subgraph of a given graph $G$ is $|E(G)|-\varphi(G)$. 
\end{observation}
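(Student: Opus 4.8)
The plan is to derive this observation directly from Theorem~\ref{T-WIASI9} by recognising a duality between deleting a minimal edge set that makes $G$ bipartite and retaining a maximal bipartite subgraph. First I would fix the interpretation of the statement: a maximal bipartite subgraph $H$ of $G$ is understood as a spanning subgraph (on the same vertex set $V(G)$) having the largest possible number of edges among all bipartite subgraphs of $G$. With this reading, any such $H$ is obtained from $G$ by deleting some set of edges $E'\subseteq E(G)$, so that $H=G-E'$ is bipartite and $|E(H)| = |E(G)|-|E'|$.

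The central step is to argue that the $E'$ arising here is exactly a minimum-cardinality edge set whose removal renders $G$ bipartite, in the sense of Theorem~\ref{T-WIASI9}. Maximising $|E(H)| = |E(G)|-|E'|$ over bipartite spanning subgraphs $H=G-E'$ is the same as minimising $|E'|$ subject to the constraint that $G-E'$ be bipartite, since $|E(G)|$ is a fixed constant. Hence choosing a maximal bipartite subgraph is equivalent to choosing a minimal deletion set $E'$, and the two optimisation problems have the same extremal solutions.

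Invoking Theorem~\ref{T-WIASI9}, which asserts that $\varphi(G)=|E'|$ for such a minimal $E'$, I would then substitute to obtain
\[
|E(H)| = |E(G)| - |E'| = |E(G)| - \varphi(G),
\]
which is precisely the claimed formula.

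The main obstacle is definitional rather than computational: one must be careful that ``maximal'' here is read as maximum cardinality (not merely inclusion-wise maximal) and that the subgraph is spanning, so that the correspondence between a maximal bipartite subgraph and a minimal deletion set $E'$ is an exact complementary relationship. Likewise, the word ``minimal'' in Theorem~\ref{T-WIASI9} must be taken to mean minimum size for the two extremal problems to coincide. Once this correspondence is pinned down, the observation is an immediate consequence of Theorem~\ref{T-WIASI9}.
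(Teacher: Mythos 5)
Your proposal is correct and follows essentially the same route as the paper, which also obtains the observation by invoking Theorem~\ref{T-WIASI9} and noting that a maximal bipartite subgraph is the complement $G-E'$ of a minimal deletion set $E'$, so its size is $|E(G)|-|E'|=|E(G)|-\varphi(G)$. Your additional care in reading ``maximal'' as maximum cardinality and ``minimal'' as minimum size makes the complementarity precise, but it is the same argument.
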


\subsection{Sparing Number and Some Graph Parameters}

The relation between the sparing number and certain other parameters such as chromatic number, matching number, covering number etc. are studied in \cite{GS5}. The major results proved in that studies are mentioned in this section.

\begin{theorem}\label{T-WIASI10}
\cite{GS5} Let $G$ be a $k$-chromatic graph. Then, its sparing number is the total number of vertices of $G$ which are not in the two maximal color classes of $G$.
\end{theorem}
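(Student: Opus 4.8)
The plan is to translate the sparing number into a purely structural quantity and then read that quantity off a proper $k$-colouring. The starting point is the observation recorded before Proposition \ref{P-WIASI2}: under any WIASI the vertices carrying non-singleton set-labels form an independent set, and an edge is mono-indexed precisely when both of its end vertices are mono-indexed. Hence, if $M$ denotes the set of mono-indexed vertices, then $V(G)\setminus M$ is independent (equivalently $M$ is a vertex cover) and the number of mono-indexed edges equals $|E(G[M])|$. Consequently $\varphi(G)=\min\{|E(G[M])|:V(G)\setminus M \text{ is independent}\}$, so minimising the mono-indexed edges amounts to choosing the independent non-mono-indexed set so as to absorb as many edges of $G$ as possible. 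Since the colour classes of a proper colouring are exactly independent sets, a $k$-colouring is the natural device for producing and controlling such a set.

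First I would establish the upper bound by an explicit construction from a proper $k$-colouring $V_1,\dots,V_k$ with $|V_1|\ge|V_2|\ge\cdots\ge|V_k|$, where $V_1,V_2$ are the two maximal colour classes. The induced subgraph $G[V_1\cup V_2]$ is bipartite, so by Observation \ref{O-WIASI4} it carries a WIASI with no mono-indexed edge, obtained by giving the vertices of $V_1$ distinct non-singleton labels and the vertices of $V_2$ distinct singleton labels. I would then extend this labelling to all of $G$ by assigning a singleton to every vertex lying outside $V_1\cup V_2$, choosing the underlying integers generically so that $f$ and $f^{+}$ remain injective (this is always possible, which is what underlies Theorem \ref{T-WIASI6}). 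In this labelling the only mono-indexed edges are those joining two mono-indexed vertices, and the aim is to account for them one per vertex of $V_3\cup\cdots\cup V_k$, giving at most $|V(G)|-|V_1|-|V_2|$ mono-indexed edges.

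The second half is the matching lower bound: every WIASI of $G$ must have at least $|V(G)|-|V_1|-|V_2|$ mono-indexed edges. Here I would argue that the non-mono-indexed independent set can cover the edges incident to at most two colour classes' worth of vertices without leaving a mono-indexed edge behind, so that each of the remaining $|V(G)|-|V_1|-|V_2|$ vertices is forced to contribute a mono-indexed edge; equivalently, via Theorem \ref{T-WIASI9}, these vertices pin down a minimal edge set whose deletion makes $G$ bipartite. I expect the genuine difficulty to lie exactly in this accounting — proving that the two maximal colour classes are simultaneously optimal and that each leftover vertex costs precisely one mono-indexed edge. This is the delicate point, since for a densely connected graph a single leftover vertex may be incident to several mono-indexed edges; making the ``one vertex, one edge'' correspondence rigorous, and pinning down the hypotheses on $G$ under which it actually holds, is the crux of the argument and the step on which I would spend the most care.
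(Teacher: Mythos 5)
Your opening reduction is sound: under any WIASI the non-mono-indexed vertices form an independent set (by Lemma \ref{L-IASI3}, two non-singleton labels on adjacent vertices would force $|f(u)+f(v)|\ge |f(u)|+|f(v)|-1>\max(|f(u)|,|f(v)|)$), so $\varphi(G)=\min\{|E(G[M])|: V(G)\setminus M \text{ independent}\}$. But the accounting step you yourself flag as the crux is not merely delicate --- it fails, and with it the theorem as stated. In your upper-bound construction, labelling $V_1$ with non-singleton sets and everything else with singletons makes \emph{every} edge of $G-V_1$ mono-indexed, and there is no mechanism by which these edges pair off ``one per vertex'' of $V_3\cup\cdots\cup V_k$: the count is $|E(G-V_1)|$, which in a dense graph far exceeds $|V(G)|-|V_1|-|V_2|$. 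Concretely, take $G=K_4$: it is $4$-chromatic with singleton colour classes, so the theorem's formula gives $4-1-1=2$, while Theorem \ref{T-WIASI8} (and equally your min-formula, since every independent set in $K_4$ has at most one vertex, leaving the three edges of a $K_3$ mono-indexed) gives $\varphi(K_4)=3$. So no proof of the statement in this generality can exist; the ``one vertex, one edge'' correspondence holds only under additional hypotheses on $G$ (it is consistent for bipartite graphs and cycles, cf. Theorem \ref{T-WIASI10a}, where each leftover vertex really does cost exactly one edge), and identifying such hypotheses is not a finishing touch but a repair of the statement itself.

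For comparison with the paper: there is nothing to compare against, because the paper supplies no proof --- the theorem is imported from \cite{GS5}, and the paragraph that follows it merely restates the claim in the notation $G-(\mathcal{C}_1\cup\mathcal{C}_2)$. Note also that the route you suggest for the lower bound via Theorem \ref{T-WIASI9} is itself unsafe: minimal edge-bipartization only bounds $\varphi$ from below in general (for $K_4$, deleting a perfect matching leaves the bipartite graph $K_{2,2}$, so that theorem would give $2$, again contradicting Theorem \ref{T-WIASI8}), because a minimum bipartization need not produce a bipartition one of whose sides is independent in $G$ itself. So your instinct about where the difficulty lies was exactly right; the correct conclusion to draw from it is that the claimed equality should be replaced by the inequality $\varphi(G)\ge |V(G)|-|\mathcal{C}_1|-|\mathcal{C}_2|$ at best, or restricted to a class of graphs for which your per-vertex accounting can be made exact.
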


That is, if $\mathcal{C}_1$ and $\mathcal{C}_2$ are the two maximal color classes of a $k$-chromatic graph $G$, then the sparing number of $G$ is the number of all vertices in $G-(\mathcal{C}_1 \cup \mathcal{C}_2)$. 

For certain types of graphs, another beautiful relation between the chromatic number and the sparing number exists, as stated in the following theorem.

\begin{theorem}\label{T-WIASI10a}
\cite{GS5} If $G$ is a bipartite graph or cycle, then $\chi(G)-\varphi(G)=2$, where $\chi(G)$ is the chromatic number of $G$.
\end{theorem}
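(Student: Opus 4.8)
The plan is to establish the identity $\chi(G)-\varphi(G)=2$ by splitting into the two structurally different situations named in the hypothesis and, in each, evaluating $\chi(G)$ directly from standard colouring facts while reading $\varphi(G)$ off the observations already proved. Since every even cycle is itself bipartite, the two genuinely distinct cases to analyse are (i) $G$ bipartite and (ii) $G$ an odd cycle.

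First I would treat the bipartite case. Because the graphs under consideration have no isolated vertices, a bipartite $G$ contains at least one edge, so it is exactly $2$-chromatic, giving $\chi(G)=2$. Observation~\ref{O-WIASI4} records that the sparing number of a bipartite graph is $0$, so $\varphi(G)=0$ and hence $\chi(G)-\varphi(G)=2-0=2$. This single computation already covers every even cycle, since even cycles are bipartite.

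Next I would handle an odd cycle $C_n$. An odd cycle is not bipartite and is well known to require three colours, so $\chi(C_n)=3$. By Observation~\ref{O-WIASI7} its sparing number is $1$, whence $\chi(C_n)-\varphi(C_n)=3-1=2$, completing the case analysis.

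Because both ingredients, namely the chromatic numbers and the sparing numbers, are either elementary or already available from the earlier observations, I do not expect any genuine obstacle here; the only point demanding a little care is the appeal to the standing assumption that $G$ has no isolated vertices, which is what guarantees $\chi(G)=2$ (rather than $1$) in the bipartite case and thereby forces the constant on the right-hand side to be exactly $2$.
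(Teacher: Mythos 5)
Your proposal is correct and follows exactly the argument the paper intends: the paper itself states this theorem with only a citation to \cite{GS5}, and the evident proof is precisely your case split, using Observation~\ref{O-WIASI4} ($\varphi(G)=0$, $\chi(G)=2$ for a bipartite graph, noting even cycles are bipartite) and Observation~\ref{O-WIASI7} ($\varphi(C_n)=1$, $\chi(C_n)=3$ for an odd cycle). Your remark that the standing no-isolated-vertices assumption is what rules out $\chi(G)=1$ is a point of care the paper leaves implicit.
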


The following theorem establishes the relation between the  sparing number and the matching number $\nu$ of paths and cycles.

\begin{theorem}\label{T-WIASI11}
\cite{GS5} If $G$ is a path or a cycle on $n$ vertices, then $\varphi(G)=\lceil \frac{n}{2} \rceil-\nu(G)$, where $\nu(G)$ is the matching number of $G$. 
\end{theorem}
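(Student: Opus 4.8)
The plan is to reduce the identity to a short case analysis driven by two data only: whether $G$ is a path or a cycle, and the parity of $n$, since for both families the matching number and the sparing number are completely determined by these. First I would record the matching numbers, which are standard: both $P_n$ and $C_n$ satisfy $\nu(G)=\lfloor n/2\rfloor$, realized by selecting alternate edges along the structure (for a cycle exactly one vertex is left unsaturated, and only when $n$ is odd). Next I would read off the sparing numbers already available in the excerpt. The cleanest route is Theorem~\ref{T-WIASI9}: $\varphi(G)$ equals the minimum number of edges whose deletion renders $G$ bipartite. A path and an even cycle are already bipartite, so $\varphi=0$ by Observation~\ref{O-WIASI4}; deleting a single edge of an odd cycle yields a path, so $\varphi=1$, in agreement with Observation~\ref{O-WIASI7}.

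With these ingredients, the theorem collapses to the elementary identity
\[
\left\lceil \tfrac{n}{2}\right\rceil-\left\lfloor \tfrac{n}{2}\right\rfloor=
\begin{cases}
0 & \text{if } n \text{ is even},\\
1 & \text{if } n \text{ is odd},
\end{cases}
\]
obtained by substituting $\nu(G)=\lfloor n/2\rfloor$ into the right-hand side $\lceil n/2\rceil-\nu(G)$. For a cycle this value matches $\varphi(C_n)$ in \emph{both} parities ($0$ for even $n$, $1$ for odd $n$), and for an even path it gives $0=\varphi(P_n)$; so in each of these three cases the claimed equality is immediate. The bookkeeping here is routine, and the only thing to be careful about is stating the matching numbers precisely so that the ceiling–floor gap is correctly attributed to parity rather than to the path-versus-cycle distinction.

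The delicate point, which I expect to be the genuine obstacle, is the \emph{odd path}. There a maximum matching leaves precisely one vertex unsaturated, giving $\nu(P_n)=\lfloor n/2\rfloor$, so the right-hand side evaluates to $\lceil n/2\rceil-\lfloor n/2\rfloor=1$; yet $P_n$ is bipartite and hence $\varphi(P_n)=0$ by Observation~\ref{O-WIASI4}. These two values do not reconcile, so a completely uniform proof across all four cases cannot go through. My expectation is therefore that the argument must either restrict the path family to even $n$, or recast the invariant on the right-hand side (for instance replacing $\lceil n/2\rceil$ by $\lfloor n/2\rfloor$ for paths, which does restore $\varphi(P_n)=0$), while retaining the ceiling for the cycle family where the odd case genuinely contributes the extra mono-indexed edge. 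Pinning down this parity accounting for the path is where the real work of the proof lies; the remaining cases follow mechanically from the matching numbers and Observations~\ref{O-WIASI4} and~\ref{O-WIASI7}.
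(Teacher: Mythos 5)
Your case analysis is the right (and essentially the only available) approach: the paper states this theorem without proof, citing \cite{GS5}, so the entire content is the verification you carried out --- $\nu(P_n)=\nu(C_n)=\lfloor n/2\rfloor$ under the vertex-count convention, $\varphi=0$ for the bipartite cases (paths, even cycles) by Observation \ref{O-WIASI4} or Theorem \ref{T-WIASI9}, and $\varphi=1$ for odd cycles by Observation \ref{O-WIASI7}, after which the identity reduces to $\lceil n/2\rceil-\lfloor n/2\rfloor$ being $0$ or $1$ according to parity. Your computations for cycles of both parities and for even paths are correct.

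The obstruction you flagged at the odd path is also correct, and it is a defect of the statement rather than of your argument: for $P_3$ on three vertices one has $\varphi(P_3)=0$ (bipartite) but $\lceil 3/2\rceil-\nu=2-1=1$, so the identity as literally written (``on $n$ vertices'') is false. The reconciliation you conjectured is the right one, and the paper itself signals it: in Proposition \ref{P-WIASI-WO-J2} the authors write ``$P_n$ is a path of \emph{length} $n$'', i.e., $P_n$ has $n$ edges and $n+1$ vertices. Under that convention $\nu(P_n)=\lfloor (n+1)/2\rfloor=\lceil n/2\rceil$, so the right-hand side evaluates to $0=\varphi(P_n)$ for every $n$, while for the cycle $C_n$ the vertex and length conventions coincide and your computation stands unchanged. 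So the theorem should be read with $n$ the number of edges of the path (and either convention for the cycle); with that emendation your proof is complete, and the ``real work'' you anticipated on the odd path dissolves into the convention fix you yourself proposed.
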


The relation between the sparing number and the matching number of certain other graphs, which can be decomposed into a finite number of odd cycles, was established in the following theorem. 

\begin{theorem}\label{T-WIASI12}
\cite{GS5} If $G$ can be decomposed into finite number of odd cycles, then $\varphi(G)=\sum_{i=1}^m{\lceil{\frac{n_i}{2}}\rceil} - \nu(G)$, where $m$ is the number of edge-disjoint cycles in $G$ and $n_i$ is the size of the cycle $C_i$ in $G$.
\end{theorem}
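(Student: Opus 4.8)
The plan is to recast the whole problem as a vertex-covering/independent-set optimisation and then to analyse it cycle by cycle. Recall the structural fact already used in the excerpt: under any WIASI of $G$ at least one end of every edge is mono-indexed, so the non-mono-indexed (non-singleton) vertices form an independent set $S$ and the mono-indexed vertices $V(G)\setminus S$ form a vertex cover; an edge is mono-indexed exactly when neither of its ends lies in $S$. Since $S$ is independent, the number of edges incident to $S$ is $\sum_{v\in S}\deg_G(v)$ with no double counting, so under such a labeling the number of mono-indexed edges equals $|E(G)|-\sum_{v\in S}\deg_G(v)$. Minimising over all independent sets $S$ would give the key identity
\[
\varphi(G)=|E(G)|-\max\Big\{\textstyle\sum_{v\in S}\deg_G(v): S\text{ independent in }G\Big\},
\]
which I would establish first, as it converts the statement into a purely combinatorial extremal problem.

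Next I would exploit the edge-disjoint decomposition $G=C_1\cup\cdots\cup C_m$. By the hereditary property (Proposition~\ref{P-WIASI2}) the restriction of a WIASI of $G$ to each $C_i$ is a WIASI of $C_i$, and since the cycles are edge-disjoint one has $\deg_G(v)=\sum_i\deg_{C_i}(v)$, so $\sum_{v\in S}\deg_G(v)=\sum_i(\text{edges of }C_i\text{ incident to }S)$. For a single odd cycle, $S\cap V(C_i)$ is independent in $C_i$ and hence covers at most $2\lfloor n_i/2\rfloor=n_i-1$ of its edges; this reproves Observation~\ref{O-WIASI7} (one unavoidable mono-indexed edge per odd cycle) and is exactly Theorem~\ref{T-WIASI11} read through the Gallai identity $\tau(C_i)+\nu(C_i)=n_i$ with $\lceil n_i/2\rceil=\tau(C_i)$. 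For the upper bound on $\varphi(G)$ I would build an explicit primary WIASI starting from a maximum matching of $G$: designate as non-singleton one suitably chosen endpoint of each matching edge, extend greedily so that $S$ stays independent while meeting each cycle in as large an independent set as possible, and give distinct singletons to the rest; counting the resulting mono-indexed edges should yield $\sum_i\lceil n_i/2\rceil-\nu(G)$.

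The hard part will be the matching lower bound, namely forcing the \emph{global} matching number $\nu(G)$ — rather than $\sum_i\nu(C_i)$ — to appear. The naive per-cycle estimate above only delivers $\varphi(G)\ge\sum_i\big(n_i-(n_i-1)\big)=m$, because it allows each cycle its own optimal independent set; upgrading this to the stated value is the entire obstruction, and it is caused by vertices lying on several cycles. Such a shared vertex can be made non-singleton only once, so the per-cycle savings cannot simply be summed, and the resulting deficit is precisely what $\nu(G)$ is meant to measure (via relations of the form $\alpha(G)=|V(G)|-\tau(G)$ together with König/Gallai-type identities linking a maximum-weight independent set to a maximum matching). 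I would therefore try to show that an extremal $S$ for the displayed identity can be chosen compatibly with a maximum matching so that the aggregate count over all cycles collapses exactly to $\sum_i\lceil n_i/2\rceil-\nu(G)$; controlling how a single shared vertex apportions its saving among the cycles through it is the delicate step, and it is here that one must use enough regularity of the decomposition (for instance, that the cycles overlap in a controlled, cactus-like fashion) for the aggregation to be tight.
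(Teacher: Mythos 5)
Your opening reduction is sound and is in the spirit of the paper's machinery: under a WIASI the non-singleton vertices form an independent set $S$, the edges that escape mono-indexing are exactly those meeting $S$, and independence kills double counting, so $\varphi(G)=|E(G)|-\max\{\sum_{v\in S}d(v): S ~\text{independent}\}$ is a legitimate reformulation (it is essentially Theorem~\ref{T-WIASI9} together with the vertex-cover/independence results, Theorems~\ref{T-WIASI14} and following). The per-cycle estimate $2\lfloor n_i/2\rfloor=n_i-1$ is also correct. But what you submit is a plan, not a proof: the entire content of the theorem --- making the \emph{global} matching number $\nu(G)$, rather than $\sum_i\nu(C_i)$, come out of the aggregation over cycles --- is exactly the step you defer with ``I would therefore try to show,'' and neither your upper-bound construction nor your lower bound is carried out. (For comparison, the paper itself offers no proof either; the result is quoted from \cite{GS5}, so there is nothing in the source to lean on.)

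Moreover, the gap is not merely unfinished; it cannot be closed for the statement as written, and your own aside about needing ``cactus-like'' overlap is precisely the missing hypothesis. Take $G=K_5$, which is the edge-disjoint union of two Hamiltonian $5$-cycles (vertices $0,\dots,4$ with cycles $0{-}1{-}2{-}3{-}4{-}0$ and $0{-}2{-}4{-}1{-}3{-}0$). Here $m=2$, $\sum_i\lceil n_i/2\rceil=6$ and $\nu(K_5)=2$, so the claimed formula gives $\varphi=4$, while Theorem~\ref{T-WIASI8} gives $\varphi(K_5)=\frac{1}{2}\cdot4\cdot3=6$; equivalently, by your identity, the best independent set in $K_5$ is a single vertex of degree $4$, so $\varphi=10-4=6$. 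Your upper-bound recipe fails concretely at this example: a maximum matching of $K_5$ has two edges, but no two non-singleton vertices may be adjacent, so you cannot designate one endpoint per matching edge as non-singleton, and the greedy extension produces more mono-indexed edges than $\sum_i\lceil n_i/2\rceil-\nu(G)$. The formula tacitly requires $\nu(G)=\sum_i\nu(C_i)$, i.e.\ controlled overlap of the cycles (compare the paper's own caveat following the theorem and the restricted version, Theorem~\ref{WIASI12a}); your per-cycle bound alone yields only $\varphi(G)\ge m$, which is the correct general statement (Theorem~\ref{T-WIASI-WO-U4}) and all that survives without extra hypotheses. So: name the overlap condition explicitly, prove $\nu(G)=\sum_i\nu(C_i)$ under it, and only then does your two-sided argument go through.
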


Theorem \ref{T-WIASI12} does not holds for edge disjoint union of $n$ cycles containing more than one odd cycle, as the relation $\nu(G) =\sum_{i=}^{n}C_{n_i}$ does not hold for $G$.  As a result we proved the following result.

\begin{theorem}\label{WIASI12a}
\cite{GS5} If $G$ is an Eulerian graph that has at most one even cycle then $\varphi(G)=\sum_{i=1}^m{\lceil{\frac{n_i}{2}}\rceil} - \nu(G)$, where $m$ is the number of edge-disjoint cycles and $n_i$ is the size of the cycle $C_i$ in $G$.
\end{theorem}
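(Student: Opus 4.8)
The plan is to reduce the statement to Theorem~\ref{T-WIASI12}, which already settles the case in which every cycle of the decomposition is odd. Since $G$ is Eulerian, it admits an edge-disjoint decomposition into cycles $C_1,C_2,\ldots,C_m$, and by hypothesis at most one of them is even. If all are odd there is nothing new to prove, so I would assume that $C_m$ is the unique even cycle while $C_1,\ldots,C_{m-1}$ are odd. Setting $G'=G-E(C_m)$, the graph obtained by deleting the edges of the even cycle, $G'$ is an edge-disjoint union of the odd cycles $C_1,\ldots,C_{m-1}$, so Theorem~\ref{T-WIASI12} applies and yields $\varphi(G')=\sum_{i=1}^{m-1}\lceil n_i/2\rceil-\nu(G')$.

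The whole argument then rests on two claims describing how the even cycle affects the two quantities in the formula. First, $\varphi(G)=\varphi(G')$: the even cycle $C_m$ is bipartite and by Observation~\ref{O-WIASI4} has sparing number $0$, so by Theorem~\ref{T-WIASI9} a minimal edge set whose removal makes $G'$ bipartite should also bipartize $G$, and conversely. Second, $\nu(G)=\nu(G')+\tfrac{1}{2}n_m$: the even cycle $C_m$ carries two complementary perfect matchings, each of size $\tfrac12 n_m$, and I would show that a maximum matching of $G'$ can be augmented by $\tfrac12 n_m$ edges drawn from $C_m$, with no larger matching of $G$ possible. Granting these, the computation is immediate, since $\lceil n_m/2\rceil=\tfrac12 n_m$ for the even length $n_m$:
\begin{align*}
\varphi(G)=\varphi(G')
&=\sum_{i=1}^{m-1}\left\lceil \tfrac{n_i}{2}\right\rceil-\nu(G') \\
&=\sum_{i=1}^{m-1}\left\lceil \tfrac{n_i}{2}\right\rceil+\tfrac{1}{2}n_m-\nu(G) \\
&=\sum_{i=1}^{m}\left\lceil \tfrac{n_i}{2}\right\rceil-\nu(G).
\end{align*}

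The hard part will be justifying the matching identity $\nu(G)=\nu(G')+\tfrac12 n_m$ when $C_m$ shares several vertices with the odd cycles. The delicate point is that a shared vertex already saturated by a maximum matching of $G'$ blocks one of the two alternating edge sets of $C_m$, so one cannot blindly adjoin a perfect matching of $C_m$; a companion worry for the first claim is that a path inside an odd $C_i$ together with a path inside $C_m$ may close into a new odd cycle, which could raise $\varphi$ above $\varphi(G')$. Both phenomena are controlled only if $C_m$ attaches to the rest of $G$ through a sufficiently restricted set of shared vertices — essentially so that every alternating combination of $C_m$ with an odd cycle stays even and so that the augmentation by $\tfrac12 n_m$ edges is unobstructed. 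I would therefore make this interface condition precise (the Eulerian, single-even-cycle hypothesis is meant to enforce it), prove under it that $G$ has no odd cycle outside $C_1,\ldots,C_{m-1}$, and only then carry out the matching augmentation. This localization of the even cycle, rather than the final arithmetic, is where the essential work lies.
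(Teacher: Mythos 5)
Your reduction scheme---peel off the unique even cycle $C_m$, apply Theorem~\ref{T-WIASI12} to $G'=G-E(C_m)$, and reattach through the two identities $\varphi(G)=\varphi(G')$ and $\nu(G)=\nu(G')+\tfrac{1}{2}n_m$---is a sensible and, in fact, completable plan; note that the paper itself offers no proof to compare against, since the theorem is quoted from \cite{GS5} without argument. But as submitted your text is a program rather than a proof: both load-bearing identities are introduced with ``I would show,'' and the ``interface condition'' that is supposed to control them is never formulated, let alone derived from the hypothesis. You correctly diagnose in your last paragraph that this localization of the even cycle is where the essential work lies---which is precisely an admission that the essential work is missing. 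Concretely, without a proof that $G$ contains no odd cycle other than $C_1,\dots,C_{m-1}$, your first claim can fail (a bipartizing set for $G'$ need not bipartize $G$), and without the augmentation argument your second claim can fail (a shared vertex saturated by every maximum matching of $G'$ would block both perfect matchings of $C_m$; indeed the analogous identity is false when a component attaches through a pendant edge, so some structural input is indispensable).

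Here is how the gap closes, so you can see its actual content. First prove a structural lemma: if two edge-disjoint cycles of the decomposition share two or more vertices, the four ``mixed'' cycles obtained by pairing an arc of one with an arc of the other contain, by a short parity count, at least two even cycles; and if three cycles hang together in a cyclic (non-tree-like) pattern, the combined cycles again include at least two even ones. Hence the single-even-cycle hypothesis forces the $C_i$ to pairwise meet in at most one vertex and to be arranged as a cactus, so the blocks of $G$ are exactly $C_1,\dots,C_m$, every cycle of $G$ is some $C_i$, and each component of $G'$ meets $C_m$ in exactly one vertex. Your first claim then follows from Theorem~\ref{T-WIASI9} (one edge per odd block bipartizes both $G'$ and $G$, and Observation~\ref{O-WIASI4} handles the even block). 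For the matching identity, the inequality $\nu(G)\le\nu(G')+\tfrac{1}{2}n_m$ is immediate by splitting a maximum matching of $G$ along $E(C_m)$; for the reverse, observe that each component of $G'$ is a connected graph all of whose blocks are odd cycles, hence factor-critical, so it has a maximum (near-perfect) matching missing exactly its attachment vertex on $C_m$; the union of these matchings with a perfect matching of $C_m$ has size $\nu(G')+\tfrac{1}{2}n_m$. This factor-criticality observation is the ``unobstructed augmentation'' you were hoping for; with it, your concluding arithmetic is correct as written.
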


We now prove the following relation between the sparing number and matching number of a graph  whose maximal bipartite subgraph is a complete bipartite graph.

\begin{theorem}\label{T-WIASI13}
Let $G$ be a WIASI graph on $n$ vertices and $m$ edges, whose maximal bipartite subgraph is a complete bipartite graph. Then, the sparing number of $G$, $\varphi(G)= m-\nu(n-\nu)$, where $\nu$ is the matching number of $G$.
\end{theorem}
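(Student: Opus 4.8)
The plan is to reduce the statement to a purely combinatorial identity relating the parts of the complete bipartite maximal subgraph to the matching number, and then to settle that identity through a vertex-cover argument. First I would invoke Observation \ref{O-WIASI9a}, which says that a maximal bipartite subgraph $H$ of $G$ has exactly $m-\varphi(G)$ edges. A maximal (inclusion-maximal) bipartite subgraph is spanning, so by hypothesis $H=K_{p,q}$ for some $p,q$ with $p+q=n$, whence $|E(H)|=pq$ and
\[
\varphi(G)=m-pq.
\]
It therefore suffices to prove that $pq=\nu(n-\nu)$; since the map $x\mapsto x(n-x)$ takes the value $pq$ exactly when $x\in\{p,q\}$, this amounts to showing $\nu(G)=\min(p,q)$.

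Next I would establish $\nu(G)=\min(p,q)$ by two inequalities. Because $H=K_{p,q}$ is a spanning subgraph of $G$, every matching of $H$ is a matching of $G$, so $\nu(G)\ge\nu(K_{p,q})=\min(p,q)$. For the reverse inequality I would produce a vertex cover of size $\min(p,q)$. Let $A,B$ be the two colour classes of $H$. The primary WIASI realising $H$ labels the non-singleton vertices by an independent set of $G$ whose removal of incident mono-indexed edges leaves precisely $K_{p,q}$; thus one class, say $A$, is independent in $G$ while all mono-indexed (removed) edges lie inside $B$. Consequently every edge of $G$ either joins $A$ to $B$ or lies inside $B$, so $B$ covers every edge and $\nu(G)\le|B|$. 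Taking $|B|=\min(p,q)$ then forces $\nu(G)=\min(p,q)$, hence $n-\nu(G)=\max(p,q)$ and $\nu(n-\nu)=\min(p,q)\,\max(p,q)=pq$, which substituted into the display gives $\varphi(G)=m-\nu(n-\nu)$.

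The delicate point, and the step I expect to be the main obstacle, is the assertion that the singleton class $B$ is the \emph{smaller} of the two parts, equivalently that in the primary WIASI the larger colour class is the one labelled by non-singleton sets. This is exactly what makes $B$ a minimum vertex cover; without it the bound $\nu(G)\le\min(p,q)$ can fail, as the complete graph $K_n$ illustrates (there a maximal independent set has a single vertex, the mono-indexed edges are forced into the large class, and $\nu(K_n)=\lfloor n/2\rfloor$ exceeds $\min(p,q)=1$). I would make the crucial claim rigorous by arguing from the optimality of the primary WIASI: if the singleton class were the larger part, one could interchange the roles of the two classes (relabelling the larger class by non-singleton sets and the smaller by singletons, which is admissible since in $K_{p,q}$ both classes are independent), thereby shifting the mono-indexed edges into the smaller class and strictly reducing their number, contradicting primality. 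Verifying that this interchange is indeed realisable as a valid WIASI, and that it genuinely lowers the count, is where the careful work lies.
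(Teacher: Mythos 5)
Your first half coincides with the paper's own proof: like the paper, you reduce via Observation \ref{O-WIASI9a} (equivalently Theorem \ref{T-WIASI9}) to $\varphi(G)=m-pq$ for $H=K_{p,q}$ spanning, and you observe that everything then hinges on the identity $\nu(G)=\min(p,q)$. The paper disposes of this identity in a single unexplained clause, ``$\nu=\nu(G)=\nu(H)=|X|$'', so you have correctly located the only nontrivial step, and your lower bound $\nu(G)\ge\nu(K_{p,q})=\min(p,q)$, via $H$ being a spanning subgraph, is fine.

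The gap is in your upper bound, and it is fatal. Your interchange argument justifies the swap by noting that both classes are independent in $K_{p,q}$, but admissibility of a WIASI requires the non-singleton class to be independent in $G$: the deleted edges $E'=E(G)\setminus E(H)$ lie inside the two parts, and whichever part contains edges of $E'$ cannot receive non-singleton labels. If $E'$ sits inside the larger part, your proposed swap puts non-singleton sets on two adjacent vertices and is not a WIASI at all, so primality is never contradicted; the singleton class $B$ is then the \emph{larger} part, and the cover argument yields only $\nu(G)\le\max(p,q)$. Worse, the step cannot be repaired, because $\nu(G)\le\min(p,q)$ is simply false under the theorem's hypotheses: take $G$ to be $K_{2,4}$ with parts $X=\{x_1,x_2\}$, $Y=\{y_1,y_2,y_3,y_4\}$ together with the two extra edges $y_1y_2$ and $y_3y_4$. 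Every bipartite subgraph must lose an edge from each of the four triangles $x_iy_1y_2$, $x_iy_3y_4$ ($i=1,2$); the triangles $x_1y_1y_2$ and $x_1y_3y_4$ are edge-disjoint, so at least two deletions are needed, and since each cross edge lies in only one of the four triangles while each intra edge lies in two, the only pair of deletions that works is $\{y_1y_2,y_3y_4\}$. Hence the unique maximum bipartite subgraph is $K_{2,4}$ and $\varphi(G)=2$ by Theorem \ref{T-WIASI9}. But $\{x_1y_1,\,x_2y_2,\,y_3y_4\}$ is a perfect matching, so $\nu(G)=3>2=\min(p,q)$, and the theorem's formula gives $m-\nu(n-\nu)=10-9=1\ne 2$. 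So the identity $\nu(G)=\nu(H)$ that the paper asserts without argument is really an additional hypothesis, not a consequence of the stated ones; your instinct that this was ``the main obstacle'' was exactly right, but no argument of the kind you sketch can close it.
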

\begin{proof}
Given that the maximal bipartite subgraph $H$ of a given graph $G$ is complete bipartite. Clearly, $V(H)=V(G)$. Let $(X, Y)$ be the bipartition of $V(H)$ such that $|X| \le |Y|$. Then, $\nu=\nu(G)=\nu(H)=|X|$. Therefore, $|Y|= n-\nu$. Since, $H$ is complete bipartite, $|E(H)|=\nu (n-\nu)$. By Theorem, \ref{T-WIASI9}, $\varphi(G)=|E(G)|-|E(H)| = m-\nu(n-\nu)$.
\end{proof}

The existence of the relations between the sparing number and the matching number of graphs other than those mentioned above, if exists, are yet to be verified. 

The relation between the number of mono-indexed vertices of a graph and its covering number is explained in the given problem.

\begin{theorem}\label{T-WIASI14}
\cite{GS5} The minimum number of mono-indexed vertices in a WIASI graph $G$ is equal to the covering number of $G$. 
\end{theorem}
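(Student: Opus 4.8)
The plan is to identify the set of mono-indexed vertices of a WIASI with a vertex cover of $G$ and then read off the minimisation from both sides. First I would invoke the observation already recorded in this excerpt (and underlying Theorem \ref{T-WIASI3}) that in any WIASI $f$ every edge of $G$ has at least one mono-indexed end vertex. This is forced by Lemma \ref{L-IASI3} together with the elementary fact that two finite sets $A,B$ of non-negative integers with $|A|,|B|\ge 2$ satisfy $|A+B|\ge |A|+|B|-1>\max(|A|,|B|)$, which would contradict the weakness condition $|f^+(uv)|=\max(|f(u)|,|f(v)|)$. Hence, fixing any WIASI $f$ of $G$ and letting $M$ denote its set of mono-indexed vertices, every edge $uv$ meets $M$, so $M$ is a vertex cover of $G$ and $|M|$ is at least the covering number of $G$. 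Minimising over all WIASIs of $G$, the minimum number of mono-indexed vertices is at least the covering number.

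For the reverse inequality I would construct, from a minimum vertex cover $S$ of $G$, a WIASI whose mono-indexed vertices are exactly the vertices of $S$. The crucial structural point is that $V(G)\setminus S$ is then an independent set, so no edge of $G$ joins two vertices outside $S$. I would therefore assign distinct singleton subsets of $\mathbb{N}_0$ to the vertices in $S$ and distinct non-singleton subsets to the vertices in $V(G)\setminus S$. For any edge $uv$, at least one end, say $u$, lies in $S$, so $|f(u)|=1$ and $|f^+(uv)|=|f(u)+f(v)|=|f(v)|=\max(|f(u)|,|f(v)|)$; thus the weakness condition holds on every edge. This labeling uses exactly $|S|$ mono-indexed vertices, and $|S|$ equals the covering number of $G$, giving a WIASI that attains the lower bound.

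Combining the two bounds yields the claimed equality. The only point needing care is to confirm that the constructed labeling is a genuine IASI, that is, that both $f$ and the induced edge map $f^+$ are injective; this is where I expect the routine-but-necessary bookkeeping to lie, and it is available because we retain complete freedom in choosing the singletons and the non-singleton sets (for instance by selecting set-labels whose sumsets are forced to be pairwise distinct), consistent with Theorem \ref{T-WIASI6} that every graph admits a WIASI. Once injectivity is arranged no genuine obstacle remains, and the substance of the argument is the identification of mono-indexed vertex sets with vertex covers rather than any delicate cardinality estimate.
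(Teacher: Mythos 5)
Your proposal is correct and takes essentially the same route as the paper, whose one-line proof likewise rests on the observation that every edge of a WIASI graph has a mono-indexed end vertex (so the mono-indexed vertices form a vertex cover) together with labeling a minimum vertex cover $S$ by distinct singleton sets and the independent set $V(G)\setminus S$ by distinct non-singleton sets. Your write-up simply makes explicit the sumset bound $|A+B|\ge |A|+|B|-1>\max(|A|,|B|)$ and the injectivity bookkeeping that the paper leaves implicit.
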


The above theorem follows from the fact that, if $S$ is the minimal vertex cover of $G$, all vertices of $S$ can be labeled by singleton sets. Moreover, this fact yields,

\begin{theorem}\label{T-WIASI14a}
If $S$ is the minimal cover of a graph $G$ on $n$ vertices, then its sparing number $\varphi(G)= |E(G)|-\sum_{v\notin S}d(v)$.
\end{theorem}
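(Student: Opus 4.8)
The plan is to realize the sparing number by the explicit labeling that underlies Theorem \ref{T-WIASI14}, and then count the mono-indexed edges it produces. First I would take $S$ to be a minimum vertex cover of $G$ and label every vertex of $S$ by a distinct singleton subset of $\mathbb{N}_0$ and every vertex of $V(G)\setminus S$ by a distinct non-singleton subset, chosen so that injectivity of $f$ and of $f^+$ holds (which is always possible). Since $S$ is a vertex cover, every edge of $G$ has at least one end in $S$, hence at least one mono-indexed end, so this $f$ is indeed a WIASI; and by Theorem \ref{T-WIASI14} it uses the least possible number of mono-indexed vertices, so it is the natural candidate for a primary WIASI.

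Next I would carry out the edge count. An edge is mono-indexed precisely when both of its ends are mono-indexed, that is, when both ends lie in $S$. Because $S$ is a vertex cover, the complementary set $I=V(G)\setminus S$ is independent, so no edge has both ends in $I$. Consequently every edge is either an edge inside $S$ (mono-indexed) or an edge joining $I$ to $S$ (not mono-indexed), and these two classes partition $E(G)$. Each edge of the second class meets exactly one vertex of $I$, so the independence of $I$ guarantees no double counting and the number of such edges equals $\sum_{v\notin S}d(v)$. Subtracting from $|E(G)|$ shows that the number of mono-indexed edges produced by $f$ is exactly $|E(G)|-\sum_{v\notin S}d(v)$.

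The remaining, and hardest, step is to argue that this count is genuinely minimal, so that it equals $\varphi(G)$ rather than merely bounding it from above. In any WIASI the set of mono-indexed vertices must be a vertex cover, and the mono-indexed edges are exactly those it induces; the delicate point is that passing to a \emph{smallest} cover need not a priori induce the fewest edges. I would resolve this by appealing to Theorem \ref{T-WIASI9}: taking $E'=E(G[S])$, removal of $E'$ destroys all edges inside $S$ and leaves only edges between the independent sets $S$ and $I$, so $G-E'$ is bipartite with parts $S$ and $I$. Verifying that this $E'$ is a minimal such set then yields $\varphi(G)=|E(G[S])|=|E(G)|-\sum_{v\notin S}d(v)$, completing the proof. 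The bipartition argument is the clean part; confirming the minimality of $E'$ is where the real work lies.
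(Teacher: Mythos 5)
Your first two paragraphs coincide exactly with the paper's own proof: label the minimum cover $S$ by distinct singleton sets and the independent complement $I=V(G)\setminus S$ by distinct non-singleton sets, observe that the edges which are not mono-indexed are precisely those meeting $I$, and count them as $\sum_{v\notin S}d(v)$, the independence of $I$ preventing double counting. The paper stops there; it never argues minimality, leaning implicitly on Theorem \ref{T-WIASI14}. So your instinct that the construction alone only gives an upper bound for $\varphi(G)$ is sound, and you have correctly located the real difficulty.

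However, the repair you propose cannot be carried out. Taking $E'=E(G[S])$ does make $G-E'$ bipartite, but $E'$ is in general \emph{not} a minimum bipartizing set, so the appeal to Theorem \ref{T-WIASI9} breaks down instead of finishing the argument. Concretely, in $K_4$ a minimum cover $S$ has three vertices and $E(G[S])$ is a triangle with three edges, yet deleting the two edges of a perfect matching already leaves the bipartite cycle $C_4$; so the ``minimality of $E'$'' that you defer as the remaining work is simply false there, even though the count $|E(G)|-\sum_{v\notin S}d(v)=6-3=3$ agrees with $\varphi(K_4)=3$ from Theorem \ref{T-WIASI8}. (This also shows that Theorem \ref{T-WIASI9}, read literally as minimum edge deletion to bipartiteness, is in tension with Theorem \ref{T-WIASI8}, so it is the wrong key lemma to route through.) What the lower bound actually requires is different: in any WIASI the vertices that are not mono-indexed form an independent set $I'$, and the mono-indexed edges number $|E(G)|-\sum_{v\in I'}d(v)$; one must therefore show that $\sum_{v\in I'}d(v)$ is maximized, over independent sets, by the complement of the chosen minimum cover. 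That is a statement about maximizing \emph{total degree}, not cardinality, and it does not follow from $|I|$ being maximum: in the path $P_4$ with vertices $a,b,c,d$ in order, the minimum cover $\{b,c\}$ gives $\sum_{v\notin S}d(v)=2$ and the formula predicts $\varphi=1$, while the equally small cover $\{a,c\}$ gives degree sum $3$ and the true value $\varphi(P_4)=0$. So the gap you flagged is genuine, your proposed route through Theorem \ref{T-WIASI9} would fail, and in fact the theorem as stated needs the cover to be chosen so that its complement maximizes the degree sum.
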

\begin{proof}
Since $S$ is a covering of $G$, no edges of $G$ have their both end vertices in $V_S$. Therefore, all vertices in $V_S$ can be labeled by distinct non-singleton sets. Therefore, all the edges which are incident on any vertex in $V-S$ have non-singleton set-labels. That is, the number of edges in $G$ that are not mono-indexed is $\sum_{v\notin S}d(v)$. Therefore, $\varphi(G)= |E(G)|-\sum_{v\notin S}d(v)$.
\end{proof}

The above result can be related to the independence number of graphs as follows.

\begin{theorem}
\cite{GS5} If $G$ is a WIASI graph, then the maximum number of vertices that are not mono-indexed in $G$ is equal to the independence number of $G$.
\end{theorem}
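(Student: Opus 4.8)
The plan is to prove the identity in two complementary directions, bounding the maximum number of non-mono-indexed vertices both above and below by the independence number $\alpha(G)$, and then to observe that the whole statement is in fact a clean corollary of Theorem~\ref{T-WIASI14}. Throughout, $\alpha(G)$ denotes the independence number of $G$, and recall that a vertex is non-mono-indexed precisely when its set-label is non-singleton.

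First I would establish the upper bound. The defining cardinality condition of a WIASI forces, as noted in the excerpt, that at least one end vertex of every edge is mono-indexed; equivalently, no edge can carry non-singleton set-labels at both of its ends. Consequently, in any WIASI $f$ of $G$, the collection of vertices that are \emph{not} mono-indexed induces no edge at all, that is, it is an independent set of $G$. Therefore the number of non-mono-indexed vertices under any single WIASI is at most $\alpha(G)$, and hence the maximum taken over all WIASIs of $G$ is at most $\alpha(G)$.

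Next I would establish achievability. Let $I$ be a maximum independent set, so $|I|=\alpha(G)$. Since $I$ is independent, every edge of $G$ has at least one end in $V(G)\setminus I$. I would assign distinct non-singleton sets to the vertices of $I$ and distinct singleton sets to the vertices of $V(G)\setminus I$. For an edge $uv$ with a mono-indexed end $u$, say $f(u)=\{a\}$, one has $f^+(uv)=a+f(v)$ with cardinality $|f(v)|=\max(|f(u)|,|f(v)|)$, so the weak condition holds on every edge automatically. The main obstacle is purely verificational: one must also guarantee that $f$ is injective on vertices and that the induced $f^+$ is injective on edges. The cardinality requirement being free, this reduces to choosing the integer labels with enough arithmetic spread (for instance, rescaling so that the relevant sumsets occupy disjoint integer windows), exactly as in the construction underlying Theorem~\ref{T-WIASI6} that every graph admits a WIASI. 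This yields a WIASI in which precisely the $\alpha(G)$ vertices of $I$ are non-mono-indexed, giving the matching lower bound and hence the equality.

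Finally, I would prefer to present the result more economically by invoking Theorem~\ref{T-WIASI14} directly. Maximising the number of non-mono-indexed vertices is the same as minimising the number of mono-indexed vertices, and Theorem~\ref{T-WIASI14} identifies the latter minimum with the covering number $\tau(G)$ of $G$. Since a set is a vertex cover exactly when its complement is independent, the Gallai identity $\alpha(G)+\tau(G)=|V(G)|$ holds, and it gives that the maximum number of non-mono-indexed vertices equals $|V(G)|-\tau(G)=\alpha(G)$, completing the proof.
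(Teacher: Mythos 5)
Your proposal is correct, and its closing paragraph is precisely the paper's own argument: the paper proves this theorem in one line by combining Theorem~\ref{T-WIASI14} (the minimum number of mono-indexed vertices equals the covering number) with the fact that the complement of a minimum vertex cover is a maximum independent set, i.e.\ the Gallai identity $\alpha(G)+\tau(G)=|V(G)|$. Your first two paragraphs go beyond the paper by supplying a self-contained two-sided argument: the upper bound (non-mono-indexed vertices form an independent set, since $|A+B|\ge |A|+|B|-1$ forces one endpoint of every edge to be mono-indexed) and the achievability construction (distinct non-singleton labels on a maximum independent set, with integer labels spread out to secure injectivity of $f$ and $f^+$, as in Theorem~\ref{T-WIASI6}). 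This is a worthwhile addition, because the paper's one-liner tacitly leans on Theorem~\ref{T-WIASI14}, whose own justification in the text is equally terse; your direct route makes explicit exactly the verification (injectivity of the induced edge labeling) that the paper never spells out, at the cost of a longer proof.
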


The proof of the above theorem is an immediate consequence of the fact that the maximal independent set of a graph $G$ is the complement of the minimal vertex cover of $G$. Hence, the sparing number of $G$ can be related to the maximal independent set of $G$ as follows.

\begin{theorem}
If $I$ is the maximal independent set of a WIASI $G$, then its sparing number is $\varphi(G)= |E(G)|-\alpha(G). \bar{d_I}(v)$, where $\alpha(G)$ is the independence number of $G$ and $\bar{d_I}(V)$ is the average degree of vertices in $I$.
\end{theorem}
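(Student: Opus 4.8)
The plan is to reduce this statement directly to Theorem~\ref{T-WIASI14a} by rewriting the degree sum appearing there in terms of the independence number and the average degree. First I would recall the standard fact, already invoked in the excerpt, that a maximum independent set $I$ of $G$ is precisely the complement of a minimum vertex cover $S$; that is, $I = V(G) \setminus S$ and $|I| = \alpha(G)$. Consequently the condition $v \notin S$ occurring in Theorem~\ref{T-WIASI14a} is equivalent to $v \in I$, so that
\[
\varphi(G) = |E(G)| - \sum_{v \notin S} d(v) = |E(G)| - \sum_{v \in I} d(v).
\]

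Next I would invoke the definition of the average degree of the vertices of $I$, namely
\[
\bar{d_I}(v) = \frac{1}{|I|} \sum_{v \in I} d(v) = \frac{1}{\alpha(G)} \sum_{v \in I} d(v),
\]
which immediately gives $\sum_{v \in I} d(v) = \alpha(G)\,\bar{d_I}(v)$. Substituting this into the displayed expression for $\varphi(G)$ yields $\varphi(G) = |E(G)| - \alpha(G)\,\bar{d_I}(v)$, as claimed. The bulk of the content is therefore inherited from Theorem~\ref{T-WIASI14a}, and the remaining step is purely the algebraic identity relating a sum of degrees to the product of a cardinality and a mean.

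The only point that requires care --- and the closest thing to an obstacle --- is to confirm that the set $I$ in the statement is genuinely a \emph{maximum} independent set, so that $|I| = \alpha(G)$, and correspondingly that its complement is a \emph{minimum} vertex cover, so that Theorem~\ref{T-WIASI14a} applies with $S = V(G)\setminus I$. This is exactly the Gallai-type complementarity between maximum independent sets and minimum vertex covers already relied upon in the preceding theorems of the excerpt, so no new ingredient beyond those results is needed; the argument is essentially a restatement of Theorem~\ref{T-WIASI14a} under the substitution $\sum_{v \in I} d(v) = \alpha(G)\,\bar{d_I}(v)$.
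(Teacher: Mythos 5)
Your proposal is correct and follows essentially the same route as the paper: the paper's own proof likewise establishes $\varphi(G)=|E(G)|-\sum_{v\in I}d(v)$ (by labeling the vertices of $I$ with non-singleton sets, which is exactly the content of Theorem~\ref{T-WIASI14a} under the complementarity $I=V(G)\setminus S$) and then concludes with the identical substitution $\sum_{v\in I}d(v)=\alpha(G)\,\bar{d_I}(v)$. Your explicit attention to $I$ being a \emph{maximum} independent set (so that $|I|=\alpha(G)$ and $V(G)\setminus I$ is a minimum cover) is a point the paper glosses over, but it introduces no new ingredient beyond what the paper already relies on.
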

\begin{proof}
If $I$ is the maximal independent set of a graph $G$, then all vertices of $I$ can be labeled by distinct non-singleton sets. Then, no edge of $G$ that has an end vertex in $I$ will be mono-indexed. Therefore, its sparing number $\varphi(G)= |E(G)|-\sum_{v\in I}d(v)$.

Now, note that $\bar{d_I}(v)=\frac{\sum_{v\in I} (d(v)}{\alpha(G)}$. Therefore, $\varphi(G)= |E(G)|-\alpha(G).\bar{d_I}(v)$.
\end{proof}

\section{WIASIs of Graph Operations}

In this section, we review the admissibility of WIASIs of different graph operations of given WIASI graphs.

\subsection{WIASIs of Graph Unions}
The union of two graphs we mention here need not be the disjoint union. First, we discuss the admissibility of weak IASI by the union of two graphs $G_1$ and $G_2$.

\begin{theorem}\label{T-WISASI-WO-U1}
\cite{GS4} Let $G_1$ and $G_2$ be two WIASI graphs whose WIASIs are $f_1$ and $f_2$ respectively. Then, $G_1\cup G_2$ admits a WIASI $f$ defined by 
\begin{equation*}
f(v)=
\begin{cases}
f_1(v) ~~\text{\rm if}~~ v\in V(G_1)\\
f_2(v) ~~\text{\rm if}~~ v\in V(G_2).
\end{cases}
\end{equation*}
\end{theorem}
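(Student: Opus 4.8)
The plan is to show that the piecewise function $f$ is a WIASI of $G_1\cup G_2$ by checking the three defining requirements in turn---that $f$ is well-defined, that $f$ and its induced edge map $f^+$ are injective, and that the weak cardinality condition holds on every edge---and to locate the genuine difficulty in the injectivity across the two pieces. The first thing I would address is well-definedness: since $G_1\cup G_2$ need not be a disjoint union, a vertex may lie in $V(G_1)\cap V(G_2)$, where both cases of the definition apply. For $f$ to be single-valued I would require $f_1$ and $f_2$ to agree on the common vertices, i.e.\ $f_1(v)=f_2(v)$ for every $v\in V(G_1)\cap V(G_2)$; this is automatic for a disjoint union. Granting this, $f$ is a genuine function on $V(G_1\cup G_2)=V(G_1)\cup V(G_2)$ whose restriction to $V(G_i)$ is exactly $f_i$.

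Next I would verify that $f$ is an IASI. The key structural observation is that every edge of $G_1\cup G_2$ belongs to $E(G_1)$ or to $E(G_2)$, and for such an edge $uv\in E(G_i)$ both endpoints lie in $V(G_i)$, so $f^+(uv)=f_i(u)+f_i(v)=f_i^+(uv)$. Thus $f^+$ restricted to $E(G_i)$ coincides with $f_i^+$, which is injective by hypothesis, and $f$ restricted to $V(G_i)$ is the injective map $f_i$. Injectivity of $f$ on vertices and of $f^+$ on edges \emph{within} each piece is therefore inherited directly from the two given WIASIs.

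The main obstacle is ruling out cross-collisions: a vertex of $V(G_1)\setminus V(G_2)$ could receive the same label as a vertex of $V(G_2)\setminus V(G_1)$, and likewise an edge of $G_1$ could receive the same sum-set as an edge of $G_2$, destroying injectivity of $f$ or of $f^+$. This cannot be avoided in general without a compatibility hypothesis on $f_1$ and $f_2$; the standard device, which I would invoke here, is to assume (or to arrange by relabeling, an operation that preserves both the IASI and the weak properties) that the label sets $f_1\bigl(V(G_1)\setminus V(G_2)\bigr)$ and $f_2\bigl(V(G_2)\setminus V(G_1)\bigr)$ are disjoint. Then no two vertices drawn from different pieces share a label, and consequently no two edges drawn from different pieces share a sum-set, which upgrades the within-piece injectivity of the previous step to global injectivity.

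Finally, the weak condition is purely local to each edge and so transfers edge by edge. For any $uv\in E(G_i)$ we have $|f^+(uv)|=|f_i^+(uv)|=\max\bigl(|f_i(u)|,|f_i(v)|\bigr)=\max\bigl(|f(u)|,|f(v)|\bigr)$, where the middle equality is exactly the statement that $f_i$ is a WIASI. Since every edge of $G_1\cup G_2$ is of this form for some $i$, the maximum-cardinality identity holds throughout, and combining this with the injectivity established above shows that $f$ is a WIASI of $G_1\cup G_2$.
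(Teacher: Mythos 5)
Your overall route matches the paper's: the paper gives no more argument than the piecewise definition together with the remark that on common elements $f_1=f_2$ (hence $f_1^+=f_2^+$), so that $f$ is well-defined and the weak condition transfers edge by edge, exactly as in your first, second and fourth paragraphs. You go further than the paper by explicitly raising cross-piece injectivity of $f$ and $f^+$, which the paper (and the statement itself, taken literally for arbitrary $f_1,f_2$) silently ignores; flagging that is a genuine improvement.

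However, your patch for the cross-collisions contains a real gap: from disjointness of the vertex-label families $f_1\bigl(V(G_1)\setminus V(G_2)\bigr)$ and $f_2\bigl(V(G_2)\setminus V(G_1)\bigr)$ you conclude that ``consequently no two edges drawn from different pieces share a sum-set,'' and that inference is false. Distinct pairs of sets can have identical sum sets: take $f(u_1)=\{0\}$, $f(v_1)=\{1,4\}$ on an edge of $G_1$ and $f(u_2)=\{1\}$, $f(v_2)=\{0,3\}$ on an edge of $G_2$; all four labels are distinct, yet $f^+(u_1v_1)=f^+(u_2v_2)=\{1,4\}$. So mere disjointness of the label ranges does not upgrade within-piece injectivity of $f^+$ to global injectivity. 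A correct repair needs a quantitative separation rather than set-disjointness, e.g.\ translating every label used on $V(G_2)\setminus V(G_1)$ by a constant $c$ exceeding twice the largest integer occurring in any label (translation preserves cardinalities of sum sets, hence preserves both the IASI and the weak property), so that the maximum elements of sum sets arising from the two pieces lie in disjoint ranges; one must then also check the edges incident to common vertices, whose labels cannot be moved because $f_1=f_2$ is forced there. In short: your identification of the difficulty is right and goes beyond the paper, but the step you use to resolve it does not work as stated.
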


It is to be noted that, if $G_1$ and $G_2$ have some common elements, then $f_1=f_2$ for the common vertices and hence $f_1^+=f_2^+$ for common edges of $G_1$ and $G_2$.

Theorem \ref{T-WISASI-WO-U1} can be generalised for the finite union of WIASI graphs $G= \cup_{i=1}^{n}G_i$ by defining the WIASI $f$ for $G$, by $f(v)=f_i(v) ~\text{if}~ v\in G_i$.

In view of the above results, the following theorem is on the sparing number of graph unions.

\begin{theorem}\label{T-WIASI-WO-U2}
\cite{GS4} Let $G_1$ and $G_2$ be two weak IASI graphs. Then, $\varphi(G_1\cup G_2) = \varphi(G_1)+\varphi(G_2)-\varphi(G_1\cap G_2)$.
\end{theorem}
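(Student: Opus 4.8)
The plan is to work directly with mono-indexed edges and reduce the identity to an inclusion--exclusion count, using the combined labeling provided by Theorem \ref{T-WISASI-WO-U1}. First I would record the decisive elementary fact that, under any WIASI, an edge is mono-indexed precisely when both of its end vertices are mono-indexed (singleton-labeled); hence, writing $S$ for the set of singleton-labeled vertices, the mono-indexed edges of any graph are exactly those induced on $S$. This reframes the sparing number as a question about the edges spanned by a chosen set of singleton vertices, a quantity that interacts cleanly with unions and intersections of edge sets.

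Next I would fix WIASIs $f_1$ of $G_1$ and $f_2$ of $G_2$ that agree on $V(G_1)\cap V(G_2)$ and combine them into a WIASI $f$ of $G_1\cup G_2$ as in Theorem \ref{T-WISASI-WO-U1}; the common restriction then furnishes a well-defined WIASI of $G_1\cap G_2$. Since every edge of $G_1\cup G_2$ lies in $E(G_1)$, in $E(G_2)$, or in $E(G_1)\cap E(G_2)=E(G_1\cap G_2)$, applying inclusion--exclusion to the mono-indexed edge sets gives, for this particular $f$,
\[
|M(G_1\cup G_2)| = |M(G_1)| + |M(G_2)| - |M(G_1\cap G_2)|,
\]
where $M(\,\cdot\,)$ denotes the mono-indexed edge set under the relevant restriction, the edges of the intersection being counted once in each of the first two terms.

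It then remains to identify each term with the corresponding sparing number. For the upper bound $\varphi(G_1\cup G_2)\le\varphi(G_1)+\varphi(G_2)-\varphi(G_1\cap G_2)$ I would seek labelings $f_1,f_2$ that are simultaneously primary on $G_1$, on $G_2$ and (via their common restriction) on $G_1\cap G_2$; for the reverse inequality I would start from a primary WIASI of $G_1\cup G_2$ and restrict it, using Proposition \ref{P-WIASI2} to guarantee that each restriction is again a WIASI and hence that $|M(G_i)|\ge\varphi(G_i)$.

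The main obstacle is precisely the compatibility of these extremal labelings across the shared part. Because the intersection term enters with a negative sign, the crude bounds $|M(G_i)|\ge\varphi(G_i)$ and $|M(G_1\cap G_2)|\ge\varphi(G_1\cap G_2)$ do not combine in the required direction, so neither inequality can be concluded term-by-term; a primary WIASI of $G_1$ and a primary WIASI of $G_2$ need not agree on the common vertices, and the restriction of a primary WIASI of the union need not be primary on each factor. I therefore expect the heart of the proof to be showing that the sparing numbers of $G_1$, $G_2$ and $G_1\cap G_2$ can be realized by a single coherent choice of singleton vertices---equivalently, via Theorem \ref{T-WIASI9}, that a minimal bipartite-making edge set of $G_1\cup G_2$ can be split along the intersection so that its traces on the two factors and on their intersection are each minimal. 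This is the step that must be argued with care, and it is plausible that mild conditions on the overlap of $G_1$ and $G_2$ are needed for it to go through.
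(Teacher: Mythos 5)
Your setup reproduces the route the paper implicitly intends: the result is only cited there (from \cite{GS4}), and the surrounding text---Theorem \ref{T-WISASI-WO-U1} together with the remark that $f_1=f_2$ on common vertices, whence $f_1^+=f_2^+$ on common edges---indicates precisely the inclusion--exclusion count of mono-indexed edges under the combined labeling that you write down. Your preliminary reductions are also correct: under any WIASI an edge is mono-indexed exactly when both of its end vertices are, and for a fixed compatible pair $(f_1,f_2)$ one does get $|M(G_1\cup G_2)|=|M(G_1)|+|M(G_2)|-|M(G_1\cap G_2)|$, since $E(G_1\cup G_2)=E(G_1)\cup E(G_2)$ and $E(G_1)\cap E(G_2)=E(G_1\cap G_2)$.

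However, your proposal stops exactly at the step you yourself flag---passing from this identity for one labeling to the identity for the minima---and that gap cannot be closed: without extra hypotheses on the overlap the stated theorem is false, so the ``single coherent choice of singleton vertices'' you hope to construct need not exist. Take $G_1$ and $G_2$ to be the triangles on $\{a,b,c\}$ and $\{a,b,d\}$, sharing the edge $ab$. Then $\varphi(G_1)=\varphi(G_2)=1$ and $G_1\cap G_2$ is the single edge $ab$, which is bipartite, so the right-hand side equals $2$. But $G_1\cup G_2=K_4-cd$, and deleting the single edge $ab$ leaves the $4$-cycle $a\,c\,b\,d$, so Theorem \ref{T-WIASI9} gives $\varphi(G_1\cup G_2)=1$ (equivalently, label the non-adjacent vertices $c$ and $d$ by non-singleton sets). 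Here the primary WIASI of the union restricts to primary WIASIs of $G_1$ and $G_2$ but forces the edge $ab$ of $G_1\cap G_2$ to be mono-indexed even though $\varphi(G_1\cap G_2)=0$---exactly the incompatibility you predicted. Even the edge-disjoint Corollary \ref{C-WIASI-WO-U3} needs such a caveat: $K_5$ is the union of two edge-disjoint $5$-cycles, yet $\varphi(K_5)=6\neq 1+1$. So your closing suspicion should be sharpened from ``plausible'' to necessary: the identity holds only under a compatibility assumption (a single labeling simultaneously primary on $G_1$, $G_2$, $G_1\cap G_2$ and their union), and any correct proof must hypothesize, not derive, that compatibility.
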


\begin{corollary}\label{C-WIASI-WO-U3}
If the two graphs $G_1$ and $G_2$ are edge disjoint, then $\varphi(G_1\cup G_2) = \varphi(G_1)+\varphi(G_2)$.
\end{corollary}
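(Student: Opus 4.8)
The plan is to derive this corollary directly from Theorem \ref{T-WIASI-WO-U2}, whose identity $\varphi(G_1 \cup G_2) = \varphi(G_1) + \varphi(G_2) - \varphi(G_1 \cap G_2)$ already carries the entire structural argument. All that remains is to evaluate the correction term $\varphi(G_1 \cap G_2)$ under the edge-disjointness hypothesis and to show that it vanishes.

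First I would unwind the intersection graph: $G_1 \cap G_2$ has vertex set $V(G_1) \cap V(G_2)$ and edge set $E(G_1) \cap E(G_2)$. The hypothesis that $G_1$ and $G_2$ are edge-disjoint says precisely that $E(G_1) \cap E(G_2) = \emptyset$, so $G_1 \cap G_2$ is an edgeless graph; it may retain the vertices shared by $G_1$ and $G_2$, but it carries no edges.

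Next I would observe that the sparing number of an edgeless graph is $0$. Since $\varphi$ counts the minimum number of mono-indexed edges over all WIASIs, and here there are no edges at all, there is nothing to be mono-indexed. Equivalently, an edgeless graph is vacuously bipartite, so Observation \ref{O-WIASI4} gives $\varphi(G_1 \cap G_2) = 0$ immediately. Substituting this into Theorem \ref{T-WIASI-WO-U2} then yields $\varphi(G_1 \cup G_2) = \varphi(G_1) + \varphi(G_2)$, which is the claim.

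I do not anticipate any genuine obstacle here, as the result is an immediate specialization of the union formula. The single point deserving a line of care is confirming that the intersection graph has sparing number $0$: because it may consist of isolated shared vertices, it falls outside the paper's standing convention that graphs have no isolated vertices, so one should justify $\varphi(G_1 \cap G_2) = 0$ on its own terms rather than simply invoking the convention, and the vacuous-bipartiteness observation settles this cleanly.
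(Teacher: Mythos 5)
Your proposal is correct and follows exactly the route the paper intends: the corollary is an immediate specialization of Theorem \ref{T-WIASI-WO-U2}, with $\varphi(G_1\cap G_2)=0$ because edge-disjointness makes the intersection edgeless (vacuously bipartite, per Observation \ref{O-WIASI4}). Your extra remark justifying the sparing number of an edgeless graph despite the paper's no-isolated-vertices convention is a careful touch, but the argument is the same as the paper's.
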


Invoking Corollary \ref{C-WIASI-WO-U3}, the following theorem for Eulerian graphs was established.

\begin{theorem}\label{T-WIASI-WO-U4}
\cite{GS8} The sparing number of an Eulerian graph $G$ is equal to the number of edge disjoint odd cycles in $G$.
\end{theorem}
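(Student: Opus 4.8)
The plan is to exploit the edge-disjoint cycle decomposition of an Eulerian graph together with the additivity of the sparing number over edge-disjoint unions established in Corollary \ref{C-WIASI-WO-U3}. First I would invoke the classical fact (Veblen's theorem) that a connected graph in which every vertex has even degree decomposes into edge-disjoint cycles; thus I write $G = C_1 \cup C_2 \cup \cdots \cup C_k$, where the $C_i$ are pairwise edge-disjoint cycles.

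Next I would extend Corollary \ref{C-WIASI-WO-U3} from two graphs to a finite family by induction. Setting $H_j = \bigcup_{i=1}^{j} C_i$, each step writes $H_j = H_{j-1} \cup C_j$, where $H_{j-1}$ and $C_j$ share no edges, so that $\varphi(H_{j-1} \cap C_j) = 0$ and Theorem \ref{T-WIASI-WO-U2} gives $\varphi(H_j) = \varphi(H_{j-1}) + \varphi(C_j)$. Iterating yields $\varphi(G) = \sum_{i=1}^{k} \varphi(C_i)$. I would then evaluate each summand: by Observation \ref{O-WIASI4} an even cycle is bipartite and contributes $\varphi(C_i) = 0$, while by Observation \ref{O-WIASI7} an odd cycle contributes $\varphi(C_i) = 1$. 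Hence $\varphi(G)$ equals the number of odd cycles appearing in the chosen decomposition, which is precisely the number of edge-disjoint odd cycles in $G$.

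The main subtlety, and the step I expect to require care, is the well-definedness of ``the number of edge-disjoint odd cycles,'' since an Eulerian graph generally admits many inequivalent cycle decompositions with, a priori, different odd-cycle counts. The additivity argument resolves this cleanly: because $\varphi(G)$ is an intrinsic invariant of $G$ and simultaneously equals the odd-cycle count of every edge-disjoint cycle decomposition, that count must be the same across all such decompositions. I would state this invariance explicitly, so that the quantity named in the theorem is unambiguous, and only then read off the displayed equality as the final conclusion.
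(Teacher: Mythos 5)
You have faithfully reconstructed the argument the paper itself intends (its ``proof'' is exactly the remark following the theorem: decompose the Eulerian graph into edge-disjoint cycles and apply Corollary \ref{C-WIASI-WO-U3}), but the inductive step $\varphi(H_j)=\varphi(H_{j-1})+\varphi(C_j)$ is where both your write-up and the paper's sketch break down. By Theorem \ref{T-WIASI9}, $\varphi$ is the minimum number of edges whose deletion leaves a bipartite graph, and that quantity is not additive over unions that are merely edge-disjoint while sharing vertices: take $G_1$ the $4$-cycle $v_1v_2v_3v_4$ and $G_2$ the single chord $v_1v_3$. All of $G_1$, $G_2$ and $G_1\cap G_2$ are bipartite, so Theorem \ref{T-WIASI-WO-U2} would give $\varphi(G_1\cup G_2)=0$, yet $G_1\cup G_2$ contains triangles and $\varphi(G_1\cup G_2)=1$. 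So the additivity you invoke holds only for vertex-disjoint pieces, and in an Eulerian graph every vertex of degree at least $4$ lies on several cycles of the decomposition, so vertex-disjointness is precisely the hypothesis you can never have. Your appeal to $\varphi(H_{j-1}\cap C_j)=0$ conflates ``the intersection has no edges'' with ``the union creates no new odd cycles,'' and the latter is what fails.

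The defect is in the statement, not only the method, and your well-definedness argument inverts into a refutation. $K_5$ is Eulerian and decomposes into two edge-disjoint $5$-cycles, so your formula yields $\varphi(K_5)=2$, contradicting $\varphi(K_5)=\frac{1}{2}\cdot 4\cdot 3=6$ from Theorem \ref{T-WIASI8} of the same paper. Moreover, the odd-cycle count genuinely depends on the decomposition: the octahedron $K_{2,2,2}$ decomposes into two Hamiltonian $6$-cycles (zero odd cycles) and also into four edge-disjoint triangles (four odd cycles), while directly $\varphi(K_{2,2,2})=12-8=4$, its largest bipartite subgraph having $8$ edges. Hence ``the number of edge-disjoint odd cycles in $G$'' is not a well-defined invariant, and the invariance you deduced collapses together with the additivity it rested on. What survives is only the packing inequality: any edge set making $G$ bipartite must meet each cycle of an edge-disjoint family of odd cycles in a distinct edge, so $\varphi(G)$ is at least the size of any such family; $K_5$ (maximum packing $2$, $\varphi=6$) shows this inequality is in general strict, so no argument along these lines can establish the claimed equality.
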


The proof of the above theorem follows from the fact that an Eulerian graph can be decomposed into edge disjoint cycles and from Corollary \ref{C-WIASI-WO-U3}.

\subsection{WIASIs of Graph Joins}

The following theorem is about the admissibility of WIASIs by the join of two WIASI graphs. 

\begin{theorem}\label{T-WIASI-WO-J1}
\cite{GS4} Let $G_1$ and $G_2$ be two weak IASI graphs. Then, the graph $G_1+G_2$ is a weak IASI graph if and only if either $G_1$ or $G_2$ is a $1$-uniform IASI graph.
\end{theorem}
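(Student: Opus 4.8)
The plan is to prove both implications by analysing the edges of the join that run between $G_1$ and $G_2$. Recall that $G_1+G_2$ has vertex set $V(G_1)\cup V(G_2)$ and edge set $E(G_1)\cup E(G_2)\cup\{uv: u\in V(G_1),\, v\in V(G_2)\}$. Since by Theorem~\ref{T-WIASI6} every graph already admits some WIASI, I read the statement as concerning whether the prescribed WIASIs $f_1$ of $G_1$ and $f_2$ of $G_2$ extend to a WIASI $f$ of the join; this is the reading under which the dichotomy has content. I will use two facts already on record: first, the basic observation noted above that under any WIASI every edge has at least one mono-indexed end vertex; and second, that for a graph without isolated vertices an IASI is $1$-uniform precisely when every vertex is mono-indexed (indeed $|f^+(uv)|=1$ forces $\max(|f(u)|,|f(v)|)=1$ by Lemma~\ref{L-IASI3}, and conversely two singleton labels produce a singleton sum set).

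For necessity I would argue by contraposition. Suppose neither $G_1$ nor $G_2$ is $1$-uniform. Then, by the equivalence above, $G_1$ carries a vertex $u$ with $|f_1(u)|\ge 2$ and $G_2$ carries a vertex $v$ with $|f_2(v)|\ge 2$. The join contains the edge $uv$, and in any labeling extending $f_1$ and $f_2$ neither end of $uv$ is mono-indexed, so by the observation that every edge of a WIASI graph must have a mono-indexed end, $G_1+G_2$ admits no such WIASI. Hence if $G_1+G_2$ is a WIASI graph compatibly with $f_1,f_2$, then at least one of $G_1,G_2$ must be $1$-uniform.

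For sufficiency, assume without loss of generality that $G_1$ is $1$-uniform, so every vertex of $G_1$ is mono-indexed. I would define $f$ on $G_1+G_2$ by retaining $f_1$ on $V(G_1)$ and a translate $f_2(\cdot)+c$ of $f_2$ on $V(G_2)$, and then check the weak condition $|f^+(e)|=\max(|f(u)|,|f(v)|)$ on the three edge types. Edges inside $G_1$ have two singleton labels, so their sum set is a singleton and the condition holds; edges inside $G_2$ inherit it from $f_2$, since translating every label by a constant leaves all set cardinalities and all sum-set cardinalities unchanged; and a cross edge $uv$ with $u\in V(G_1)$ has $f(u)$ a singleton, whence $|f(u)+f(v)|=|f(v)|=\max(|f(u)|,|f(v)|)$. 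Thus one factor being $1$-uniform suffices.

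The one step that needs real care, and where I expect the main obstacle, is guaranteeing that $f$ is genuinely an IASI on the whole join, i.e.\ that $f$ is injective on vertices and that $g_f=f^+$ is injective on edges once the two labelings are merged. This is exactly where the translation constant $c$ (and, if needed, a rescaling of one label set) is used: because $\mathbb{N}_0$ is unbounded, $c$ can be chosen large enough that the value ranges of $f_1$ and of $f_2(\cdot)+c$ are disjoint and that no two edges---within $G_1$, within $G_2$, or across---share a sum set. Since such a translation alters none of the cardinalities checked above, the weak condition is preserved, so the content of this step is only a counting/range argument rather than anything structural. The combinatorial heart of the theorem is therefore the cross-edge dichotomy of the two middle paragraphs: a forbidden cross edge appears exactly when both factors retain a non-mono-indexed vertex, which is ruled out precisely by one factor being $1$-uniform.
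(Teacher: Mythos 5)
Your proof is correct and takes essentially the same route as the paper's: the paper's entire justification is the observation that every vertex of $G_1$ is adjacent to every vertex of $G_2$ in $G_1+G_2$, so a non-mono-indexed vertex in each factor would yield a cross edge with neither end mono-indexed, forcing one factor to be $1$-uniform, which is exactly your cross-edge dichotomy. Your extra care about the proper reading of the statement (needed in light of Theorem \ref{T-WIASI6}) and about restoring injectivity of $f^+$ by translating and, if necessary, rescaling the labels of one factor goes beyond the paper's one-line sketch but does not constitute a different approach.
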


The proof of Theorem \ref{T-WIASI-WO-J1} follows from the fact that every vertex of $G_1$ is adjacent to every vertex of $G_2$ in $G_1+G_2$. In view of this theorem, the sparing number of certain graph joins are established  in \cite{GS4}.

\begin{proposition}
The sparing number of $K_m\times K_n$ is $\frac{1}{2}(m+n-1)(m+n-2)$
\end{proposition}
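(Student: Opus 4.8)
The plan is to reduce the statement to the already-established sparing number of a complete graph. Here the operation in $K_m \times K_n$ must be read, consistently with the surrounding subsection, as the graph join $K_m + K_n$. The central observation is structural: in the join, every vertex of the first copy is adjacent to every vertex of the second copy, while within each factor all vertices are already mutually adjacent because each factor is complete. Hence every pair of vertices of $V(K_m)\cup V(K_n)$ is joined by an edge, so
\[
K_m + K_n = K_{m+n}.
\]
Once this identification is made, the whole proposition becomes a corollary of a previous result.

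First I would confirm that the join is indeed a WIASI graph, so that its sparing number is well defined. This is immediate from Theorem \ref{T-WIASI6}, since every graph admits a WIASI. It is also consistent with Theorem \ref{T-WIASI-WO-J1}: the factor $K_m$ (equivalently $K_n$) admits a $1$-uniform IASI obtained by labelling its vertices with distinct singletons drawn from a Sidon set, which forces each of its edge set-indexing numbers to be $1$ while keeping the induced edge map injective.

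Next I would simply invoke Theorem \ref{T-WIASI8}, which gives the sparing number of a complete graph $K_p$ as $\tfrac{1}{2}(p-1)(p-2)$. Substituting $p = m+n$ yields
\[
\varphi(K_m + K_n) = \varphi(K_{m+n}) = \tfrac{1}{2}(m+n-1)(m+n-2),
\]
which is exactly the claimed value. As a consistency check, the number of non-mono-indexed edges this predicts is $\binom{m+n}{2} - \tfrac{1}{2}(m+n-1)(m+n-2) = m+n-1$, matching the picture in which a single non-singleton-labelled vertex is adjacent to the remaining $m+n-1$ vertices.

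I do not anticipate a genuine obstacle: the only real content is recognising that the join of two complete graphs collapses to a single complete graph, after which the result is a one-line substitution into Theorem \ref{T-WIASI8}. The point most worth stating carefully is therefore the structural identity $K_m + K_n = K_{m+n}$, on which the entire argument hinges.
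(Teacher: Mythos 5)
Your proof is correct and takes essentially the approach the paper intends: the proposition sits in the graph-joins subsection because $K_m\times K_n$ there denotes the join $K_m+K_n=K_{m+n}$, after which the value follows at once from Theorem \ref{T-WIASI8}, which is exactly your argument. Your supporting checks (the Sidon-set singleton labelling making a complete factor $1$-uniform as required by Theorem \ref{T-WIASI-WO-J1}, and the count of $m+n-1$ non-mono-indexed edges at the lone non-singleton vertex) are both sound.
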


\begin{proposition}\label{P-WIASI-WO-J2}
\cite{GS4} The sparing number of $F_{1,n}=K_1+P_n$ is $\frac{n}{2}$, where $P_n$ is a path of length $n$. 
\end{proposition}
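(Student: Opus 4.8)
The plan is to exhibit an optimal primary WIASI of $F_{1,n}$ explicitly and count its mono-indexed edges. First I would fix notation: let $u$ denote the apex (the vertex of $K_1$) and $v_0,v_1,\dots,v_n$ the vertices of the path $P_n$ of length $n$. Then $F_{1,n}=K_1+P_n$ consists of the $n$ path edges $v_iv_{i+1}$ together with the $n+1$ spoke edges $uv_i$, so $|E(F_{1,n})|=2n+1$. In this graph the two end vertices $v_0,v_n$ have degree $2$, each internal vertex $v_1,\dots,v_{n-1}$ has degree $3$, and $d(u)=n+1$. By Theorem~\ref{T-WIASI-WO-J1} the join admits a WIASI, the natural one arising from that construction labelling the $K_1$-factor $u$ by a singleton set.

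Next I would reduce the evaluation of $\varphi(F_{1,n})$ to a weighted independence computation on the path. Since every edge of a WIASI graph has at least one mono-indexed end, the set $S$ of non-mono-indexed vertices is independent, and an edge is mono-indexed exactly when both of its ends are mono-indexed. I would first argue that an optimal WIASI takes $u$ to be mono-indexed: the only alternative is to leave $u$ non-mono, which forces every $v_i$ to be mono-indexed (to spare the spokes) and hence makes all $n$ path edges mono-indexed, which is never better than what follows. With $u$ mono-indexed, $S$ lies entirely in the path and is independent there; a spoke $uv_i$ is spared iff $v_i\in S$, and a path edge is spared iff one of its ends lies in $S$. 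As $S$ is independent, the total number of spared edges is therefore $\sum_{v\in S}d(v)$, so, in the spirit of Theorem~\ref{T-WIASI14a}, $\varphi(F_{1,n})=(2n+1)-\max_S\sum_{v\in S}d(v)$, the maximum being over independent sets $S$ of the path.

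Then I would solve this maximization by taking the alternating set $S=\{v_0,v_2,v_4,\dots\}$. For even $n$ this set is $\{v_0,v_2,\dots,v_n\}$, of size $\tfrac{n+2}{2}$, containing the two degree-$2$ end vertices and $\tfrac{n-2}{2}$ internal vertices of degree $3$; its degree sum is $2+2+3\cdot\tfrac{n-2}{2}=\tfrac{3n+2}{2}$. Substituting gives $\varphi(F_{1,n})=(2n+1)-\tfrac{3n+2}{2}=\tfrac n2$, as claimed.

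The main obstacle is the optimality step, namely showing that no independent set beats the alternating one. This means ruling out sets that exchange an internal (degree $3$) vertex for configurations built from the degree-$2$ end vertices, and also ruling out the choice $S=\{u\}$, which yields only $d(u)=n+1\le\tfrac{3n+2}{2}$. A clean way to settle this is an exchange argument along the path or an induction on $n$ obtained by deleting an end vertex. I would also flag the parity caveat: repeating the computation for odd $n$ gives $\varphi(F_{1,n})=\tfrac{n+1}{2}$, so the statement valid for all $n$ is $\varphi(F_{1,n})=\lceil n/2\rceil$, which coincides with $\tfrac n2$ precisely when $n$ is even.
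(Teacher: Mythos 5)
Your proposal is essentially correct, and it is worth noting that the paper itself offers no proof of this proposition at all: the result is simply quoted from \cite{GS4}, with only the remark that it follows in view of Theorem \ref{T-WIASI-WO-J1}. The construction implicit in that line of work is the one you recover --- label the apex $u$ with a singleton set and alternate singleton/non-singleton labels along the path --- so your answer agrees with the intended one, but your framing is more systematic: you reduce $\varphi(F_{1,n})$ to the general identity $\varphi(G)=|E(G)|-\max_S\sum_{v\in S}d(v)$ over independent sets $S$ (the non-mono-indexed vertices of any WIASI form an independent set, and conversely any independent set can be realized as the non-mono class), which is exactly the content of Theorem \ref{T-WIASI14a} read as an optimization. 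This buys you two things the paper's citation does not: a verifiable optimality criterion, and the observation that the stated value $\frac{n}{2}$ is only correct for even $n$; for odd $n$ the maximum weighted independent set $\{v_0,v_2,\dots,v_{n-1}\}$ has degree sum $\frac{3n+1}{2}$, giving $\varphi(F_{1,n})=\frac{n+1}{2}$, so the clean statement under the ``length $n$'' convention is $\varphi(F_{1,n})=\lceil n/2\rceil$. That parity caveat is a genuine and warranted correction to the proposition as printed.

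The one step you leave as a sketch --- that no independent set beats the alternating one --- closes in two lines, and you should include it rather than defer to an exchange argument: every vertex of $F_{1,n}$ other than $u$ has degree at most $3$, so any independent $S$ inside the path satisfies $\sum_{v\in S}d(v)\le 3|S|$; for even $n$ the path $v_0,\dots,v_n$ has $n+1$ (odd) vertices, so either $|S|\le \frac{n}{2}$, whence $\sum_{v\in S}d(v)\le\frac{3n}{2}<\frac{3n+2}{2}$, or $|S|=\frac{n+2}{2}$, in which case $S$ is forced to be the set of even-indexed vertices, which contains both degree-$2$ endpoints and has degree sum exactly $\frac{3n+2}{2}$. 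Together with your check that $S=\{u\}$ yields only $n+1\le\frac{3n+2}{2}$, the maximum is $\frac{3n+2}{2}$ and the claimed value $\frac{n}{2}$ follows. With that paragraph added, your proof is complete and, unlike the paper, self-contained.
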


\begin{proposition}\label{P-WIASI-WO-J3}
\cite{GS4} The sparing number of a wheel graph $W_{n+1}=C_n+K_1$ is $\lfloor \frac{n+1}{2} \rfloor$.
\end{proposition}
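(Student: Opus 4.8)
The plan is to realise $W_{n+1}$ as the join $C_n+K_1$ and to read off its sparing number from the independent-set/degree-sum description of a primary WIASI. First I would observe that $K_1$ is trivially a $1$-uniform IASI graph, so by Theorem \ref{T-WIASI-WO-J1} the wheel $W_{n+1}=C_n+K_1$ admits a WIASI and $\varphi(W_{n+1})$ is well defined. I would then invoke the principle underlying Theorems \ref{T-WIASI14} and \ref{T-WIASI14a}: in any WIASI the non-mono-indexed vertices form an independent set (two adjacent non-singletons would force an edge-label of cardinality strictly exceeding the required maximum), and an edge is mono-indexed exactly when both of its end vertices are mono-indexed. Hence labelling an independent set $I$ by distinct non-singleton sets deletes precisely the $\sum_{v\in I}d(v)$ edges incident with $I$ from the mono-indexed count, and a primary WIASI chooses $I$ to maximise this weight. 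Since $|E(W_{n+1})|=2n$, this gives $\varphi(W_{n+1})=2n-\max_{I}\sum_{v\in I}d(v)$, the maximum being over independent sets $I$.

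The optimisation has only two competing types, because the hub $v_0$ is adjacent to every rim vertex. Either $v_0\in I$, which forces $I=\{v_0\}$ and contributes weight $d(v_0)=n$; or $I$ avoids the hub and is an independent set of the rim cycle $C_n$, each of whose vertices has degree $3$, of size at most $\lfloor n/2\rfloor$ and hence weight at most $3\lfloor n/2\rfloor$. I would settle the even case first: when $n$ is even a maximum independent set of $C_n$ is a perfect alternating set, so I label the hub by a singleton and the $n/2$ alternate rim vertices by distinct non-singleton sets. Then no two mono-indexed rim vertices are adjacent, so no rim edge is mono-indexed, and the only mono-indexed edges are the $n/2$ spokes joining the hub to the remaining rim vertices. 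This yields $\varphi(W_{n+1})=2n-3\cdot\tfrac{n}{2}=\tfrac{n}{2}=\lfloor\tfrac{n+1}{2}\rfloor$, and since $\tfrac{n}{2}\le n$ the hub-in-$I$ alternative is never better, confirming the claim for even $n$.

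The main obstacle is the odd case. When $n$ is odd a maximum independent set of the rim has size $(n-1)/2$, and its complement on the rim necessarily contains one pair of adjacent vertices; so after labelling the hub by a singleton, besides the $(n+1)/2$ mono-indexed spokes one rim edge is forced to be mono-indexed, exactly as in the single unavoidable mono-indexed edge of an odd cycle (Observation \ref{O-WIASI7}). Carrying the count through gives $(n+1)/2+1=(n+3)/2$ for this strategy, against $n$ for the hub-in-$I$ strategy, and the crux is to compare these honestly with $\lfloor\tfrac{n+1}{2}\rfloor$ and to decide whether any labelling can avoid the forced rim edge. This is precisely the delicate point: the sanity check $n=3$, where $W_4=K_4$ has sparing number $\tfrac12(4-1)(4-2)=3$ by Theorem \ref{T-WIASI8} rather than $\lfloor\tfrac{4}{2}\rfloor=2$, shows that the odd case is exactly where the argument, and the clean floor expression, must be scrutinised most carefully.
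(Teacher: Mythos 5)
Your framework is sound, and since the paper itself offers no proof of this proposition (it is quoted from \cite{GS4}), your derivation has to stand on its own --- and it does, up to the point where you stop. The reduction $\varphi(W_{n+1})=2n-\max_{I}\sum_{v\in I}d(v)$ over independent sets $I$ is valid (it is exactly the counting principle of Theorem~\ref{T-WIASI14a}), your two candidate types for $I$ are exhaustive because the hub dominates the rim, and your even case is complete and correct: the value $\tfrac{n}{2}=\lfloor\tfrac{n+1}{2}\rfloor$ is attained and optimal. You were also right to base the count on the independent-set/degree-sum description rather than on Theorem~\ref{T-WIASI9}: the edge-bipartization characterization is not equivalent to it (already on $K_4$ it would give $2$ against the correct $\varphi(K_4)=3$ of Theorem~\ref{T-WIASI8}), since the colour classes of $G-E'$ need not be independent in $G$ itself.

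What you leave unfinished is the odd case, and you should finish it, because the conclusion is that the proposition as printed is false for odd $n$, exactly as your $n=3$ sanity check indicates. For odd $n\ge 3$ the maximum weight is $\max\bigl(n,\,3\cdot\tfrac{n-1}{2}\bigr)=\tfrac{3(n-1)}{2}$, so $\varphi(W_{n+1})=2n-\tfrac{3(n-1)}{2}=\tfrac{n+3}{2}$, which exceeds the claimed $\lfloor\tfrac{n+1}{2}\rfloor=\tfrac{n+1}{2}$ by one; the forced mono-indexed rim edge cannot be avoided by any labelling, since the non-singleton rim vertices form an independent set of the odd cycle $C_n$ and hence number at most $\tfrac{n-1}{2}$. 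At $n=3$ this gives $\varphi(W_4)=3=\varphi(K_4)$, consistent with Theorem~\ref{T-WIASI8} and refuting the stated value $2$. So the correct statement is $\varphi(W_{n+1})=\tfrac{n}{2}$ for even $n$ and $\tfrac{n+3}{2}$ for odd $n$; the single floor expression is valid only in the even case. Note also that the paper's proof of Proposition~\ref{P-WIASI-WO-J4} applies the stated formula once per wheel, so the double-wheel value inherits the same discrepancy when $n$ is odd.
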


Using Proposition \ref{P-WIASI-WO-J3},we prove the sparing number of a double wheel. 

\begin{proposition}\label{P-WIASI-WO-J4}
The sparing number of a double wheel graph $DW_{2n+1}=2C_n+K_1$ is 
\begin{equation*}
\varphi(DW_{2n+1}=)
\begin{cases}
n+1 & \text{ if~ $n$ is odd}\\
n & \text{ if~ $n$ is even}.
\end{cases}
\end{equation*}.
\end{proposition}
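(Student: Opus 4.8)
The plan is to realise $DW_{2n+1}$ as the union of two copies of the wheel $W_{n+1}=C_n+K_1$ that share only the central vertex, and then to apply the union formula of Theorem \ref{T-WIASI-WO-U2} together with the known wheel sparing number of Proposition \ref{P-WIASI-WO-J3}. Writing the two rim cycles as $C_n^{(1)}$ and $C_n^{(2)}$ and the common hub as $c$, I would set $W^{(1)}=C_n^{(1)}+\{c\}$ and $W^{(2)}=C_n^{(2)}+\{c\}$, so that $DW_{2n+1}=W^{(1)}\cup W^{(2)}$. The first step is to verify that this is an edge-disjoint union: the spokes and rim edges of $W^{(1)}$ involve only $c$ and the vertices of $C_n^{(1)}$, those of $W^{(2)}$ involve only $c$ and the vertices of $C_n^{(2)}$, and since the two rims are vertex-disjoint the two edge sets meet only in the empty set. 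Consequently $W^{(1)}\cap W^{(2)}$ is the single vertex $c$ carrying no edge.

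Next I would apply Theorem \ref{T-WIASI-WO-U2} to obtain $\varphi(DW_{2n+1})=\varphi(W^{(1)})+\varphi(W^{(2)})-\varphi(W^{(1)}\cap W^{(2)})$. The intersection $W^{(1)}\cap W^{(2)}$ is edgeless, so $\varphi(W^{(1)}\cap W^{(2)})=0$, and since each $W^{(i)}\cong W_{n+1}$, Proposition \ref{P-WIASI-WO-J3} gives $\varphi(W^{(i)})=\lfloor\frac{n+1}{2}\rfloor$. Hence $\varphi(DW_{2n+1})=2\lfloor\frac{n+1}{2}\rfloor$, and a short parity computation closes the argument: when $n$ is even, $n+1$ is odd and $2\lfloor\frac{n+1}{2}\rfloor=2\cdot\frac{n}{2}=n$, while when $n$ is odd, $n+1$ is even and $2\lfloor\frac{n+1}{2}\rfloor=2\cdot\frac{n+1}{2}=n+1$, exactly the two claimed values.

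The step I expect to demand the most care is the legitimacy of the inclusion–exclusion across the shared hub. To invoke Theorem \ref{T-WIASI-WO-U2} I must be sure the primary WIASIs attaining $\varphi(W^{(1)})$ and $\varphi(W^{(2)})$ can be chosen so as to agree on the one overlapping vertex $c$. This is where Theorem \ref{T-WIASI-WO-J1} is useful: in a wheel $C_n+K_1$ the hub is adjacent to every rim vertex, so the $K_1$ part may be taken to be $1$-uniform, which makes $c$ mono-indexed in an optimal wheel labeling; choosing this option in both wheels renders the two labelings automatically compatible at $c$. With that compatibility established, the overlap contributes only the single mono-indexed vertex $c$ and no mono-indexed edge, so subtracting $\varphi(W^{(1)}\cap W^{(2)})=0$ is justified and the count of mono-indexed edges is genuinely additive over the two edge-disjoint wheels, yielding the stated value of $\varphi(DW_{2n+1})$.
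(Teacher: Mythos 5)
Your proof is correct and takes essentially the same route as the paper: both decompose $DW_{2n+1}$ into two copies of the wheel $C_n+K_1$ sharing only the hub, apply Proposition \ref{P-WIASI-WO-J3} to each copy, and double to get $\varphi(DW_{2n+1})=2\lfloor\frac{n+1}{2}\rfloor$, which the parity check turns into the two stated values. Your explicit verification of edge-disjointness and of the compatibility of the two optimal labelings at the shared hub (justifying the appeal to Theorem \ref{T-WIASI-WO-U2}, or equivalently Corollary \ref{C-WIASI-WO-U3}) merely makes rigorous what the paper's proof leaves implicit when it labels the hub by a singleton set and counts each wheel separately.
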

\begin{proof}
A double wheel is of the form $DW_{2n+1}=2C_n+K_1=(C_n\cup  C_n)+K_1$, which can be considered as the union of two isomorphic wheel graphs with the same hub (the vertex $v$ that is not in $C_n$ and is adjacent to all vertices of $C_n$). Therefore, as in the case of wheel graph, an IASI yields minimum number of mono-indexed edges if we label the hub vertex $v$ by singleton sets and the vertices of $C_n$ alternately by singleton and non-singleton sets. Therefore, by Proposition \ref{P-WIASI-WO-J3}, each wheel graph component $C_n+\{v\}$ has $\lfloor \frac{n+1}{2} \rfloor$ mono-indexed edges. Therefore, $\varphi(DW_{2n+1})=2.\lfloor \frac{n+1}{2} \rfloor$. This completes the proof.
\end{proof}

\begin{corollary}
If $G= m C_n+K_1$, then $\varphi(G)=m.\lfloor \frac{n+1}{2} \rfloor$. 
\end{corollary}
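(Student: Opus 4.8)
The plan is to realize $G = mC_n+K_1$ as an edge-disjoint union of $m$ wheels that share a common hub, and then to add up their sparing numbers exactly as was done for the double wheel in Proposition~\ref{P-WIASI-WO-J4}. Writing $G = (C_n^{(1)}\cup\cdots\cup C_n^{(m)})+K_1$, the single vertex $v$ of $K_1$ becomes a hub adjacent to every vertex of every cycle, so $G$ decomposes into the $m$ wheels $W^{(i)} = C_n^{(i)}+\{v\}$ for $i=1,\dots,m$. The key structural fact I would record first is that these wheels are pairwise edge-disjoint: any two of them meet only in the hub $v$, and a single vertex carries no edges, so $W^{(i)}\cap W^{(j)}$ is edgeless for $i\neq j$.

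First I would fix the labeling that realizes the minimum on every wheel simultaneously: label the shared hub $v$ by a singleton set, and label the vertices of each cycle $C_n^{(i)}$ alternately by singleton and non-singleton sets so that no two adjacent vertices receive non-singleton labels. Since the hub is labeled consistently across all wheels, this is a single WIASI of $G$ whose restriction to each $W^{(i)}$ is precisely the primary WIASI of a wheel; by Proposition~\ref{P-WIASI-WO-J3} each such restriction contributes $\lfloor\frac{n+1}{2}\rfloor$ mono-indexed edges.

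To pass from the per-wheel count to $\varphi(G)$, I would invoke the additivity of the sparing number over edge-disjoint unions. Because the pairwise intersections are edgeless, $\varphi(W^{(i)}\cap W^{(j)})=0$, so Theorem~\ref{T-WIASI-WO-U2} (equivalently Corollary~\ref{C-WIASI-WO-U3}) gives, by induction on the number of summands,
\begin{equation*}
\varphi(G)=\varphi\!\left(\bigcup_{i=1}^m W^{(i)}\right)=\sum_{i=1}^m \varphi(W^{(i)})=m\left\lfloor\frac{n+1}{2}\right\rfloor,
\end{equation*}
which is the claimed value; the case $m=2$ recovers Proposition~\ref{P-WIASI-WO-J4}.

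The step I expect to be the main obstacle is justifying that this edge-disjoint additivity really delivers the global minimum rather than merely an upper bound. The inequality $\varphi(G)\le m\lfloor\frac{n+1}{2}\rfloor$ is immediate from exhibiting the labeling above, but the matching lower bound requires that no WIASI of the whole graph can beat the per-wheel optima, despite the shared hub linking all the wheels together. This is exactly what Corollary~\ref{C-WIASI-WO-U3} asserts for edge-disjoint graphs, so the real care lies in verifying its hypotheses inductively: at each stage the partial union $\bigcup_{i=1}^{k}W^{(i)}$ must remain edge-disjoint from $W^{(k+1)}$, which holds since they again share only the hub $v$. Once this bookkeeping is in place the result follows; note that the entire conclusion is only as strong as the wheel formula of Proposition~\ref{P-WIASI-WO-J3} on which it rests.
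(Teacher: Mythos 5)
Your proof is correct and follows essentially the same route as the paper: the paper treats $mC_n+K_1$ as $m$ wheels $C_n+\{v\}$ sharing the hub $v$, labels the hub by a singleton and each cycle alternately, and multiplies the per-wheel count $\lfloor\frac{n+1}{2}\rfloor$ from Proposition~\ref{P-WIASI-WO-J3}, exactly as in its proof of Proposition~\ref{P-WIASI-WO-J4}. Your only addition is making explicit the appeal to the edge-disjointness of the wheels and Corollary~\ref{C-WIASI-WO-U3} (applied inductively) for the lower bound, a step the paper leaves implicit.
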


\subsection{WISASIS of Graph Complements}

A graph $G$ and its complement $\bar{G}$ have the same set of vertices and hence $G$ and $\bar{G}$ have the same set-labels for their corresponding vertices. The set-labels of the vertices in $V(G)$ under a WIASI of $G$ need not form a WIASI for the complement of $G$. A set-labeling of $V(G)$ that defines a WIASI for both the graphs $G$ and its complement $\bar{G}$ may be called a {\em concurrent set-labeling}. The set-labels of the vertex set of $G$ mentioned in this section are concurrent.

The following result discuss about the sparing number of the complement of a WIASI graph $G$.

\begin{proposition}\label{P-WIASI-WO-J5}
\cite{GS4} Let $G$ be a connected weak IASI graph on $n$ vertices. If its complement $\bar{G}$ is also a weak IASI graph, then $\bar{G}$ contains at least $\frac{1}{2}[(n-1)(n-2)-2r]$ mono-indexed edges, where $r=\Delta(G)$, the maximum vertex degree.
\end{proposition}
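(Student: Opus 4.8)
The plan is to exploit the fact that a concurrent set-labeling must serve as a weak IASI for $G$ and for $\bar{G}$ at the same time, and to turn this dual requirement into a sharp restriction on how many vertices may carry non-singleton labels. First I would recall the basic structural fact underlying every WIASI, noted in the excerpt: each edge of a WIASI graph has at least one mono-indexed end-vertex, so the set of non-mono-indexed vertices is independent. Applying this simultaneously to $G$ and to $\bar{G}$ under the same concurrent labeling, the collection of non-mono-indexed vertices is independent in $G$ and in $\bar{G}$ at once. Since any two distinct vertices are adjacent in exactly one of $G$ and $\bar{G}$, such a set can contain at most one vertex. Hence at most one vertex of $G$ receives a non-singleton label; denote it by $w$ (if every vertex is mono-indexed, then every edge of $\bar{G}$ is mono-indexed and the bound holds trivially).

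Next I would translate the count of mono-indexed edges into a count of edges inside a vertex set. Under this labeling an edge of $\bar{G}$ is mono-indexed precisely when both of its end-vertices are mono-indexed, that is, precisely when it avoids $w$. Writing $S=V(G)\setminus\{w\}$, the mono-indexed edges of $\bar{G}$ are exactly the edges of $\bar{G}$ lying inside $S$. Because each of the $\binom{n-1}{2}$ pairs of vertices inside $S$ is an edge of exactly one of $G$ and $\bar{G}$, the number of mono-indexed edges of $\bar{G}$ equals
\begin{equation*}
\binom{n-1}{2}-e_G(S),
\end{equation*}
where $e_G(S)$ denotes the number of edges of $G$ having both ends in $S$.

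The final step is to bound $e_G(S)$ from above. The edges counted by $e_G(S)$ are exactly the edges of $G$ not incident with $w$, so $e_G(S)=|E(G)|-\deg_G(w)$; the aim is to show this quantity is at most $r=\Delta(G)$, which then yields the advertised estimate $\binom{n-1}{2}-r=\frac{1}{2}[(n-1)(n-2)-2r]$. Here I would argue that the relevant concurrent labeling places the single non-singleton label at a suitably chosen vertex, and I would try to control $|E(G)|-\deg_G(w)$ through the interplay between $G$ and $\bar{G}$ that concurrency forces.

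I expect this last bounding step to be the main obstacle. The reduction to ``at most one non-singleton vertex'' and the clean identity $\binom{n-1}{2}-e_G(S)$ are essentially forced by the WIASI structure, but pinning the subtracted term down to exactly $r$ is delicate: $e_G(S)=|E(G)|-\deg_G(w)$ need not be small for an arbitrary choice of $w$, so the argument must either select $w$ with care or bring in an extra constraint relating $|E(G)|$ to $\Delta(G)$ in order to guarantee $e_G(S)\le r$. Verifying that this inequality really holds for the graphs under consideration, rather than only for favourable configurations, is the crux on which the stated bound rests.
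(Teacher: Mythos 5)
Your two structural steps are exactly the right ones, and they coincide with the argument evidently intended by the source (note that this review states Proposition~\ref{P-WIASI-WO-J5} with a citation to \cite{GS4} but supplies no proof of its own): under a concurrent labeling the vertices with non-singleton set-labels form an independent set in both $G$ and $\bar{G}$, hence there is at most one such vertex $w$, and the mono-indexed edges of $\bar{G}$ are precisely the pairs inside $S=V(G)\setminus\{w\}$ that are non-edges of $G$, giving your identity $\binom{n-1}{2}-e_G(S)$ with $e_G(S)=|E(G)|-\deg_G(w)$. But the obstacle you flag at the last step is not merely delicate --- it is fatal, because the inequality $|E(G)|-\deg_G(w)\le \Delta(G)$ cannot be forced by any choice of $w$: the most favourable choice (a minimum-degree vertex) still leaves $e_G(S)=|E(G)|-\delta(G)$, so the stated bound $\binom{n-1}{2}-r$ holds only when $|E(G)|\le \Delta(G)+\delta(G)$, a condition satisfied by stars, $K_3$ or $C_4$ but by almost no other connected graph.

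Concretely, take $G=C_5$, so $n=5$, $r=2$, and the claimed bound is $\frac{1}{2}[4\cdot 3-4]=4$. Label four vertices by the singletons $\{1\},\{2\},\{3\},\{5\}$ and one vertex $w$ by $\{0,8\}$; one checks all induced edge labels are distinct, every edge of $C_5$ and of $\overline{C_5}\cong C_5$ has a mono-indexed end, so this is a concurrent weak IASI, and $\bar{G}$ has exactly $\binom{4}{2}-(5-2)=3<4$ mono-indexed edges. (Similarly $P_6$ gives $7<8$.) So the proposition as stated is false, and your identity in fact yields the correct sharp value $\binom{n-1}{2}-|E(G)|+\delta(G)$. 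The source's bound would require the chain $|E(\bar{G})|\ge \binom{n-1}{2}$ and $\deg_{\bar{G}}(w)\le r$, but connectivity of $G$ gives only $|E(\bar{G})|\le \binom{n-1}{2}$ (equality precisely for trees), and $\deg_{\bar{G}}(w)\le r$ amounts to $\deg_G(w)\ge n-1-\Delta(G)$ --- both inequalities point the wrong way, which is exactly the direction error your analysis exposes. Your honest assessment that the crux is unresolvable as stated is correct; a complete treatment should either add the hypothesis $|E(G)|\le \Delta(G)+\delta(G)$ or replace the bound by the sharp one above.
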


\begin{proposition}\label{P-WIASI-WO-J6}
\cite{GS4} If $G$ is a self-complementary $r$-regular graph on $n$ vertices which admits a weak IASI, then $G$ and $\bar{G}$ contain at least $\frac{1}{2}r(2r-1)$ mono-indexed edges. 
\end{proposition}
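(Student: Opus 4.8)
The plan is to reduce everything to two facts: the arithmetic forced by self-complementarity together with $r$-regularity, and the severe restriction that a \emph{concurrent} set-labeling places on the non-mono-indexed vertices.

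First I would pin down the parameters. Since $G$ is $r$-regular, its complement $\bar G$ is $(n-1-r)$-regular, and the isomorphism $G\cong\bar G$ forces equal degrees, whence $r=n-1-r$, i.e. $n=2r+1$. Consequently $|E(G)|=\tfrac12 nr=\tfrac12(2r+1)r$, and since $\bar G$ is then also $r$-regular on $2r+1$ vertices we get $|E(\bar G)|=\tfrac12(2r+1)r$ as well.

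The heart of the argument is the following observation about any concurrent set-labeling $f$. In a WIASI an edge fails to be mono-indexed exactly when one of its end vertices carries a non-singleton label, so the set $S$ of non-mono-indexed vertices is independent in the graph being labeled. Because $f$ is a WIASI of both $G$ and $\bar G$ with the \emph{same} labels, $S$ must be independent in $G$ and in $\bar G$ simultaneously. But any two distinct vertices are adjacent in exactly one of $G$ and $\bar G$; hence $S$ cannot contain two vertices, and $|S|\le 1$. This is the step I expect to be the crux---everything afterwards is bookkeeping---so I would state and justify it carefully; it is also the same structural observation underlying Proposition~\ref{P-WIASI-WO-J5}.

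Finally I would count. If $S=\emptyset$ then every edge is mono-indexed and the bound is immediate; otherwise $S=\{v\}$, and the only edges that can fail to be mono-indexed are those incident with $v$, namely $d_G(v)=r$ of them in $G$ and $d_{\bar G}(v)=r$ of them in $\bar G$. Hence in each graph the number of mono-indexed edges is at least $|E|-r=\tfrac12(2r+1)r-r=\tfrac12\,r(2r-1)$, which is the claimed bound for both $G$ and $\bar G$. I would close by noting that labeling a single vertex with a non-singleton set realizes a concurrent WIASI attaining this count, so the estimate is sharp and not vacuous.
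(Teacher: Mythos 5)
Your proof is correct, and it is essentially the argument this result rests on: the paper states Proposition~\ref{P-WIASI-WO-J6} only with a citation and gives no internal proof, so there is nothing to compare line by line. Your three steps --- deducing $n=2r+1$ (hence $|E(G)|=|E(\bar{G})|=\tfrac{1}{2}r(2r+1)$) from regular self-complementarity, observing that under the section's standing convention of a \emph{concurrent} set-labeling the non-mono-indexed vertices form a set independent in both $G$ and $\bar{G}$ and hence of size at most one (since any two vertices are adjacent in exactly one of the two graphs), and then counting $|E|-r=\tfrac{1}{2}r(2r-1)$ --- are exactly what the surrounding discussion (Proposition~\ref{P-WIASI-WO-J5} and the concurrency remark) presupposes, and your sharpness remark is a useful addition not made in the paper.
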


%\section{WIASIs of Certain Graph Products}

In this section, we report the admissibility of WIASI by certain products of WIASI graphs.

\subsection{WIASIs of Cartesian Product of WIASI graphs}

Let $G_1$ and $G_2$ be two graphs of order $n_1$ and $n_2$ respectively.The cartesian product $G_1\times G_2$ may be viewed as follows. Make $n_2$ copies of $G_1$, denoted by $G_{1i};~ 1\le i \le n_2$, where $G_{1i}$ corresponds to the vertex $v_i$ of $G_2$. Now, join the corresponding vertices of two copies $G_{1i}$ and $G_{1j}$ if the corresponding vertices $v_i$ and $v_j$ are adjacent in $G_2$. Thus, we view the product $G_1\times G_2$ as a union of $n_2$ copies of $G_1$ and a finite number of edges connecting two copies $G_{1i}$ and $G_{1j}$ of $G_1$ according to the adjacency of the corresponding vertices $v_i$ and $v_j$ in $G_2$, where $1\le i\neq j\le n_2$.

The WIASI of $G_1\times G_2$ is defined by assigning integral multiples of the set-labels of $G_1$ to the copies $G_{1i}$ of $G_1$ for odd $i$ and re-labeling the vertices of $G_{1i}$  corresponding to the non-adjacent mono-indexed vertices of $G_1$ by non-singleton sets and vice versa, for even $i$. Then, we have

\begin{theorem}\label{T-WIASI-GPCP1}
\cite{GS7}The cartesian product of two WIASI graphs admits a WIASI.
\end{theorem}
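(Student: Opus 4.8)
The plan is to exhibit the set-labeling $f$ on $G_1\times G_2$ described just above the statement and then check the three things an IASI must satisfy: injectivity of $f$ on vertices, injectivity of the induced map $f^+$ on edges, and the weakness condition $|f^+(uv)|=\max(|f(u)|,|f(v)|)$ for every edge $uv$. I would organise the verification around the two kinds of edges of $G_1\times G_2$: the \emph{intra-copy} edges lying inside a single copy $G_{1i}$, which correspond to edges of $G_1$, and the \emph{inter-copy} edges joining the copies of a common vertex $u$ across two copies $G_{1i},G_{1j}$ with $v_iv_j\in E(G_2)$.

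First I would record that multiplying every set-label of a WIASI by a fixed positive integer is a cardinality-preserving bijection on each label, hence carries a WIASI of $G_1$ to another WIASI of $G_1$ with the same pattern of mono-indexed vertices. Assigning $f(u)=c_i\,f_1(u)$ on the odd copies, with the scalars $c_i$ chosen distinct and sufficiently spread out, therefore makes each odd copy a WIASI of $G_1$ and forces the numeric ranges of labels in different copies to be disjoint; this settles vertex-injectivity and, together with Lemma~\ref{L-IASI3}, the injectivity and weakness of $f^+$ on every intra-copy edge. On the even copies I would use the complementary labeling, interchanging singleton and non-singleton labels as prescribed, so that a vertex mono-indexed in the odd copies becomes non-mono in the even copies and conversely; by Proposition~\ref{P-WIASI2} the restriction to each copy is again a WIASI of $G_1$. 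The inter-copy edge-labels are the sum sets of the two copy-labels of a common vertex $u$, and a generic choice of the scalars keeps all of these distinct from one another and from the intra-copy labels, which is routine bookkeeping rather than a genuine difficulty.

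The substantive step is the weakness condition on the inter-copy edges, since such an edge joins the two images of the \emph{same} vertex $u$ and the condition demands that at least one of those images be mono-indexed. The odd/even alternation supplies exactly this whenever the two endpoints sit in copies of opposite parity, i.e. whenever the parity of the index $i$ is a proper $2$-colouring of $G_2$. Thus the clean case is $G_2$ bipartite: I would index the copies so that parity agrees with the bipartition, and every inter-copy edge then has one mono-indexed end. I expect the main obstacle to be the remaining case, where $G_2$ is non-bipartite (so some inter-copy edge joins two same-parity copies) or where the complementary labeling fails to be a WIASI of an even copy unless $G_1$ is bipartite. Here I would repair each offending inter-copy edge by re-labeling one of its two endpoints with a singleton set, which only enlarges the set of mono-indexed vertices and hence cannot spoil the WIASI property already established on the copies; that such a consistent repair exists is in any case guaranteed by Theorem~\ref{T-WIASI6}. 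Assembling these pieces yields a WIASI of $G_1\times G_2$ and completes the proof.
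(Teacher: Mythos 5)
Your construction is the same one the paper sketches immediately before Theorem~\ref{T-WIASI-GPCP1} (integral multiples of the $G_1$-labels on the odd copies, the mono/non-mono swap on the even copies), and your organisation of the check into intra-copy and inter-copy edges matches how that sketch is meant to be verified; the paper offers no more detailed proof than this, deferring to \cite{GS7}. You go beyond the paper in two respects. First, you correctly isolate the point the sketch silently skips: the odd/even alternation handles an inter-copy edge only when the adjacent copies $G_{1i},G_{1j}$ have indices of opposite parity, i.e.\ only when parity gives a proper $2$-colouring of $G_2$, so the construction as described is complete only for bipartite $G_2$. Your repair --- demoting one endpoint of each offending inter-copy edge to a fresh singleton label --- is legitimate, since converting labels to singletons can only create mono-indexed ends and therefore can never violate the weakness condition $|f^+(uv)|=\max(|f(u)|,|f(v)|)$, which (by the sumset bound behind Lemma~\ref{L-IASI3}) is exactly the requirement that every edge have a mono-indexed end; injectivity survives by choosing the fresh singletons distinct from all labels in use. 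Second, your appeal to Theorem~\ref{T-WIASI6} exposes that the statement is in fact trivial: the Cartesian product is a graph, and every graph admits a WIASI, so the elaborate copy-by-copy construction earns its keep only insofar as one wants to control the number of mono-indexed edges, i.e.\ the sparing number, which is the paper's implicit agenda. One small correction: your worry that the swapped labeling on an even copy ``fails to be a WIASI unless $G_1$ is bipartite'' is unfounded --- the swap upgrades only mono-indexed vertices that are mutually non-adjacent and downgrades the non-singleton ones, so every edge of the copy retains at least one mono-indexed end whatever the structure of $G_1$; the only genuine obstruction anywhere in the construction is an odd cycle in $G_2$, and your repair step already disposes of it.
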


If a graph $G$ is the cartesian product of two graphs $G_1$ and $G_2$, then $G_1$ and $G_2$ are called the {\em factors} of $G$. A graph is said to be {\em prime} with respect to a given graph product if it is nontrivial and cannot be represented as the product of two non trivial graphs. 

\begin{theorem}\label{T-WIASI-GPCP2}
\cite{GS7} Let $G$ is a non-prime graph which admits a WIASI. Then, every factor of $G$ also admits a weak IASI.
\end{theorem}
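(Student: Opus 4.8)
The plan is to exploit the fact that in the Cartesian product the factors appear as subgraphs, and then to invoke the hereditary nature of the WIASI property established in Proposition~\ref{P-WIASI2}. Since $G$ is non-prime with respect to the Cartesian product, by definition we may write $G=G_1\times G_2$ with both $G_1$ and $G_2$ nontrivial. Let $f$ be the given WIASI of $G$, with induced edge function $f^+$, and let $n_1=|V(G_1)|$, $n_2=|V(G_2)|$.

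First I would make precise the structural observation already recorded in the excerpt's description of the Cartesian product: $G_1\times G_2$ can be viewed as $n_2$ copies $G_{11},G_{12},\dots,G_{1n_2}$ of $G_1$, one for each vertex $v_i$ of $G_2$, together with connecting edges between copies. Each copy $G_{1i}$ is an induced subgraph of $G$ that is isomorphic to $G_1$. Symmetrically, by grouping the vertices according to each fixed vertex of $G_1$, one obtains $n_1$ induced copies of $G_2$ inside $G$. Thus each factor of $G$ is (isomorphic to) a subgraph of $G$.

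Next I would apply Proposition~\ref{P-WIASI2}. Because the existence of a WIASI is a hereditary property, the restriction $f^\ast$ of $f$ to the vertex set of any copy $G_{1i}$ is again a WIASI of that copy; since $G_{1i}\cong G_1$, transporting this labeling along the isomorphism yields a WIASI of $G_1$. The identical argument applied to any copy of $G_2$ yields a WIASI of $G_2$. Hence every factor of $G$ admits a weak IASI, which is the desired conclusion. (Equivalently, one could simply appeal to Theorem~\ref{T-WIASI6}, since every graph admits a WIASI; the subgraph argument is preferable because it exhibits the factor's WIASI as inherited directly from $f$.)

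The step requiring the most care is the structural identification in the second paragraph: I must verify that each copy $G_{1i}$ really is an \emph{induced} subgraph isomorphic to $G_1$, so that the restriction of $f$ indeed gives a labeling of a graph isomorphic to the factor and no extra adjacencies spoil the WIASI condition. This follows immediately from the definition of the Cartesian product, in which the only edges within a fixed copy are exactly the images of the edges of $G_1$. Once this is confirmed, the hereditary property does all the remaining work, so I anticipate no genuine obstacle beyond stating the embedding cleanly.
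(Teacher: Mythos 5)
Your proposal is correct and follows the route the paper evidently intends: since each factor of a Cartesian product embeds as an (induced) layer, i.e.\ a subgraph of $G$, the hereditary property of Proposition~\ref{P-WIASI2} immediately transfers the WIASI of $G$ to each factor (the paper itself only cites \cite{GS7} for this theorem, but its surrounding discussion of the copies $G_{1i}$ is precisely your embedding argument). Your parenthetical observation is also apt: in view of Theorem~\ref{T-WIASI6} the statement is in fact trivially true, and the subgraph argument's only added value is that it exhibits the factor's WIASI as a restriction of the given $f$.
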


%Now, we check the sparing number of the cartesian products of certain graphs.

%\begin{proposition}\label{T-WIASI-GP-CP2}
%\cite{GS7} The sparing number of the planar grid $P_m\times P_n$ is $0$.
%\end{proposition}
%The proof follows from the fact that the cartesian product of two bipartite graphs is also a bipartite graph.

%\begin{theorem}\label{T-NWFN}
%\cite{GS7}   The sparing number of the torus grid $C_m\times C_n$ is 
%\[ \varphi(C_m\times C_n) = \left\{
%  \begin{array}{l l}
%    $0$ & \quad \text{if both $C_m$ and $C_n$ are even}\\
%    $2n$ & \quad \text{if $C_m$ is odd $C_n$ is even}\\
%    $2n$ & \quad \text{if $C_m$ and $C_n$ are odd and $m\le n$}
%  \end{array} \right.\]
%\end{theorem}

%\begin{theorem}\label{T-NME2K}
%\cite{GS7}  The sparing number of the product $K_m\times K_n$ of two complete graphs $K_m$ and $K_n$ is $n(m+n-2)$.
%\end{theorem}

\subsection{Corona of Weak IASI Graphs}

By {\em corona} of two graphs $G_1$ and $G_2$, denoted by $G_1\odot G_2$, is the graph obtained taking one copy of $G_1$ (which has $p_1$ vertices) and $p_1$ copies of $G_2$ and then joining the $i$-th point of $G_1$ to every point in the $i$-th copy of $G_2$. The number of vertices and edges in $G_1\odot G_2$ are $p_1(1+p_2)$ and $q_1+p_1q_2+p_1p_2$ respectively, where $p_i$ and $q_i$ are the number of vertices and edges of the graph $G_i, i=1,2$.

The following theorem reports the admissibility of WIASI by coronas of two graphs.

\begin{theorem}\label{T-WIASI-GP-C4}
\cite{GS7} Let $G_1$ and $G_2$ be two graphs on $m$ and $n$ vertices respectively. Then, $G_1\odot G_2$ admits a WIASI if and only if either $G_1$ is $1$-uniform or it has $r$ copies of $G_2$ that are $1$-uniform, where $r$ is the number of vertices in $G_1$ that are not mono-indexed.
\end{theorem}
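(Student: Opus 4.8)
The plan is to prove both implications from the single governing fact, already noted earlier in the paper, that under any WIASI every edge has at least one mono-indexed end vertex. Concretely, if $A=f(u)$ and $B=f(v)$ are both non-singleton then the integer sum-set obeys $|A+B|\ge |A|+|B|-1>\max(|A|,|B|)$ (since $\min(|A|,|B|)-1\ge 1$), so the WIASI condition is violated. The architecture of the corona is that the $i$-th vertex $v_i$ of $G_1$ is joined to \emph{every} vertex of the $i$-th copy of $G_2$, so it is precisely these join edges that couple the two factors, and the whole argument turns on applying the governing fact to them.

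For the necessity direction I would suppose that $G_1\odot G_2$ admits a WIASI $f$. By Proposition~\ref{P-WIASI2} the restriction of $f$ to $V(G_1)$ is a WIASI of $G_1$, which determines the mono-indexed vertices of $G_1$ and hence the number $r$ of non-mono-indexed ones. If $r=0$ then $G_1$ is $1$-uniform and we are in the first alternative. If $r>0$, take any non-mono-indexed vertex $v_i$; since $v_i$ is adjacent to all $n$ vertices of the $i$-th copy of $G_2$ and no edge may have both ends non-mono-indexed, every vertex of that copy must be mono-indexed. Thus each of the $r$ copies of $G_2$ sitting at a non-mono-indexed vertex of $G_1$ is $1$-uniform, which is the second alternative.

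For sufficiency I would construct an explicit WIASI. Keep the given WIASI on $G_1$; to each copy of $G_2$ attached to a mono-indexed vertex of $G_1$ assign an arbitrary WIASI (one exists by Theorem~\ref{T-WIASI6}), and to each of the $r$ copies attached to a non-mono-indexed vertex assign the hypothesised $1$-uniform labelling. Every edge inside $G_1$ and inside each copy of $G_2$ then already satisfies the WIASI condition, while a join edge $v_iw$ has a mono-indexed end: namely $v_i$ when $v_i$ is mono-indexed, and $w$ when the copy at $v_i$ is $1$-uniform. Hence $|f^+(e)|=\max(|f(u)|,|f(v)|)$ on every edge $e$ and the corona is a WIASI graph.

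The one technical point to discharge, and the step I expect to be the real work, is global injectivity: the $m(1+n)$ vertex labels must be pairwise distinct and the induced edge sum-sets pairwise distinct across all factors at once. I would handle this using the affine freedom of the integer-additive setting, translating the set-labels of the $i$-th copy of $G_2$ into a numerical window disjoint from those of $G_1$ and of the other copies (e.g.\ by adding a sufficiently large integer depending on $i$). Such a translation preserves every cardinality $|f(u)|$ and every sum-set cardinality $|f(u)+f(v)|$, so it leaves the WIASI condition untouched while forcing distinctness. This reduces the full statement to the structural mono-indexing constraint analysed above.
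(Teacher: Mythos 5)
Your proof is correct. The paper offers no proof of Theorem~\ref{T-WIASI-GP-C4} at all (it is quoted from \cite{GS7}), but for the sibling result on joins (Theorem~\ref{T-WIASI-WO-J1}) it sketches precisely your governing idea --- no edge of a WIASI graph can have two non-mono-indexed end vertices, here applied to the edges joining $v_i$ to its copy of $G_2$ --- and both your necessity argument (via the hereditary Proposition~\ref{P-WIASI2} and the forced mono-indexing of every copy attached to a non-mono-indexed $v_i$) and your sufficiency construction carry this out correctly. Your final translation device for global injectivity of $f$ and $f^{+}$ --- shifting the labels of the $i$-th copy by a sufficiently fast-growing $t_i$ so that vertex labels, internal edge sum-sets and join-edge sum-sets occupy pairwise disjoint integer windows while every cardinality is preserved --- is sound, and it explicitly discharges a point that the paper's style of argument leaves entirely implicit.
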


%\begin{theorem}\label{T-WIASI-GP-C1}
%\cite{GS7}  Let $C_m$ be a cycle that admits a weak IASI and let $P_n$ be a path. Then, $C_m\odot P_n$ admits a weak IASI if and only if either $C_m$ or $r$ copies of $P_n$ that are $1$-uniform, where $r$ is the number of vertices of $C_m$ that are not mono-indexed and $P_n\odot C_m$ admits a weak IASI if and only if either $P_n$ is $1$-uniform or $r$ copies of $C_m$ that are $1$-uniform. 
%\end{theorem}

%\begin{theorem}\label{T-WIASI-GP-C2}
%\cite{GS7} Let $C_m$ and $C_n$ be two cycles that admit weak IASI. Then, the corona $C_m\odot C_n$ admits a weak IASI if and only if $C_m$ is $1$-uniform or $r$ copies of $C_m$ are $1$-uniform, where $r$ is the number of vertices in $C_m$ that are not mono-indexed. 
%\end{theorem}

%\begin{theorem}\label{T-WIASI-GP-C3}
%\cite{GS7} Let $C_m$ and $C_n$ be two cycles of lengths $n$ and $m$ respectively, that are weak IASI graphs. If $C_n$ is not 1-uniform, then their corona $C_n\odot C_m$ has at most $\lfloor \frac{m}{2} \rfloor$ copies of $C_n$ that are 1-uniform.
%\end{theorem}

%The following theorem explains the sparing number of the corona of two cycles $C_m\odot C_n$.

%\begin{theorem}\label{T-WIASI-GP-C5}
%\cite{GS7} The sparing number of $C_m\odot C_n$ is
%\begin{equation*}
%\varphi(C_m\odot C_n)=
%\begin{cases}
%\frac{3}{4}mn & ~~\text{if ~$m$~ and ~$n$~ are even}\\ 
%\frac{3}{4}m(n+1) & ~~\text{if ~$m$~ is even and ~$n$~ is odd}\\
%1+\frac{1}{4}(3m-1)n & ~~\text{if ~$m$~ is odd and ~$n$~ is even}\\
%2+\frac{1}{4}(3m-1)(n+1) & ~~\text{if ~$m$~ and ~$n$~ are odd}.
%\end{cases}
%\end{equation*}
%\end{theorem}

\section{WIASIs of Some Associated Graphs}

In this section, we discuss the admissibility of WIASI by certain associated graphs of a given WIASI graph. Line graphs, total graphs, subdivisions, super subdivisions, minors etc. are some of the associated graphs of a given graph.

We consider two types of set-labelings for associated graphs. In the first type, the elements of an associated graph are labeled by the same set-labels of the corresponding elements of the original graph. If an associated graph are obtained by replacing some elements of $G$ by some new elements (which are not in $G$), then the set-labels of the new elements are the same set-labels of the replaced elements of $G$. This IASI is called an {\em induced IASI}. In the second type, each associated graph can be treated as an individual graph and are set-labeled independently without considering the set-labels of the given graph $G$. 

The following results explains whether certain associated graphs admit induced WIASIs.

\begin{proposition}\label{P-WIASI-AG1}
\cite{GS0} $G$ be a WIASI graph. Let a vertex $v$ of $G$ with $d(v)=2$, which is not in a triangle, and $u$ and $w$ be the adjacent vertices of $v$. Then, the graph $H=(G-v)\cup \{uw\}$ admits an induced WIASI if and only if $v$ is not mono-indexed or at least one of the edges $uv$ or $vw$ is mono-indexed. 
\end{proposition}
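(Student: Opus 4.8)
The plan is to analyze the suppression of the degree-$2$ vertex $v$ and track exactly how the set-indexing numbers of the edges change. Let $f$ be the given WIASI of $G$, and let $u, w$ be the neighbours of $v$. Since $v$ is not in a triangle, $u$ and $w$ are non-adjacent in $G$, so the edge $uw$ we introduce in $H=(G-v)\cup\{uw\}$ is genuinely new and not already present. The induced set-labeling on $H$ keeps $f$ on all surviving vertices (in particular $f(u)$ and $f(w)$ are unchanged), and the only edge whose set-label must now be checked afresh is $uw$, with $f^+(uw)=f(u)+f(w)$. Every other edge of $H$ is an edge of $G$, and since WIASI is a hereditary property by Proposition \ref{P-WIASI2}, each such edge already satisfies the weak condition $|f^+(e)|=\max(|f(u')|,|f(v')|)$. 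So the whole question reduces to whether the single new edge $uw$ is weak, i.e. whether $|f(u)+f(w)|=\max(|f(u)|,|f(w)|)$.

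Next I would recall the structural constraint imposed on $u$ and $w$ by the edges $uv$ and $vw$ in $G$. Because $f$ is a WIASI, each of these edges has at least one mono-indexed end vertex. I would split into cases according to the set-indexing number of $v$. If $v$ is \emph{not} mono-indexed, then $|f(v)|\ge 2$, and weakness of both $uv$ and $vw$ forces $u$ and $w$ each to be mono-indexed (since the larger cardinality must come from $v$). Then $|f(u)|=|f(w)|=1$, so $f(u)+f(w)$ is a singleton and trivially $|f(u)+f(w)|=1=\max(|f(u)|,|f(w)|)$; hence $uw$ is weak. If instead $v$ \emph{is} mono-indexed, then I would use the second hypothesis: at least one of $uv,vw$ is mono-indexed, say $uv$. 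A mono-indexed edge has both endpoints mono-indexed (as noted in the excerpt), so $u$ is mono-indexed, $|f(u)|=1$, and then $|f(u)+f(w)|=|f(w)|=\max(|f(u)|,|f(w)|)$ regardless of $|f(w)|$, so $uw$ is again weak.

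For the converse, I would show that if the hypothesis fails then $uw$ cannot be weak. Suppose $v$ is mono-indexed and \emph{neither} $uv$ nor $vw$ is mono-indexed. Then, since $v$ is mono-indexed and edge $uv$ is not mono-indexed, $u$ must be non-mono-indexed (otherwise both endpoints of $uv$ would be mono-indexed, making $uv$ mono-indexed); similarly $w$ is non-mono-indexed. Thus $|f(u)|,|f(w)|\ge 2$, and by Lemma \ref{L-IASI3} a sum set of two non-singleton sets generically exceeds $\max(|f(u)|,|f(w)|)$ — indeed, for the induced labeling inherited from a WIASI one checks that $|f(u)+f(w)|>\max(|f(u)|,|f(w)|)$, so $uw$ violates the weak condition and $H$ admits no induced WIASI. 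This is the one place needing a little care: the induced set-labels are fixed, so I cannot simply relabel, and I must verify that the concrete $f(u), f(w)$ arising from a WIASI genuinely produce an oversized sum set whenever both are non-singleton.

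The main obstacle I anticipate is precisely this converse direction: proving that two non-singleton inherited set-labels must produce a sum set strictly larger than the maximum of their sizes. The inequality $\max(|f(u)|,|f(w)|)\le|f(u)+f(w)|\le|f(u)||f(w)|$ from Lemma \ref{L-IASI3} gives the right ballpark, but the lower bound alone does not force strict inequality. I would therefore need to exploit the particular structure of the set-labels in a WIASI — that non-singleton set-labels in a weak scheme typically arise as arithmetic-progression-type sets whose pairwise sums spread out — to rule out the degenerate coincidence $|f(u)+f(w)|=\max(|f(u)|,|f(w)|)$ for two non-singleton sets, which would require one of them to be singleton. Once that sum-set cardinality fact is pinned down, both directions close cleanly.
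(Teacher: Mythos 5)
Your reduction of the problem is the right one: by Proposition~\ref{P-WIASI2} every surviving edge of $H$ inherits the weak condition from $G$, and since $v$ lies in no triangle the edge $uw$ is genuinely new, so everything hinges on whether $|f(u)+f(w)|=\max(|f(u)|,|f(w)|)$. Both directions of your case analysis are correctly organized. But there is one genuine hole, which you yourself flag and then propose to fill in the wrong way. In the converse you assert that ``one checks'' $|f(u)+f(w)|>\max(|f(u)|,|f(w)|)$ whenever both labels are non-singletons, and you suggest establishing this by exploiting ``arithmetic-progression-type'' structure of set-labels arising in a WIASI. That route is a dead end: the definition of a WIASI places no constraint whatsoever on the internal structure of a non-singleton vertex label, and arithmetic progressions are in fact the \emph{extremal} (sum-minimizing) sets, not a degenerate coincidence to be ruled out. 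What actually closes the gap is the elementary sumset inequality, valid for \emph{all} finite nonempty sets $A,B\subset\mathbb{N}_0$: writing $A=\{a_1<a_2<\cdots<a_n\}$ and $B=\{b_1<b_2<\cdots<b_m\}$, the chain $a_1+b_1<a_1+b_2<\cdots<a_1+b_m<a_2+b_m<\cdots<a_n+b_m$ exhibits $|A|+|B|-1$ distinct elements of $A+B$, so $|A+B|\ge |A|+|B|-1$, with no structural hypothesis needed. Consequently $|f(u)+f(w)|=\max(|f(u)|,|f(w)|)$ forces $\min(|f(u)|,|f(w)|)=1$: an edge satisfies the weak condition if and only if at least one of its end vertices is mono-indexed. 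This is precisely the observation already recorded in this paper (``at least one end vertex of every edge of a WIASI graph is mono-indexed''), which you could have cited outright instead of leaving the key cardinality fact unproven; note that the paper itself states the proposition with a citation to \cite{GS0} and gives no proof here, and the intended argument in the cited source is exactly this endpoint characterization.

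Two smaller points. First, with that characterization in hand your forward direction also tightens up: in Case~1 your parenthetical justification --- that $u$ must be mono-indexed ``since the larger cardinality must come from $v$'' --- is not the real reason; the weak condition fails for \emph{any} edge with two non-singleton ends, even when $|f(u)|>|f(v)|\ge 2$, so $u$ is forced to be mono-indexed by the same sumset bound, independently of which label is larger. The conclusions of both of your forward cases are correct once restated this way. Second, strictly speaking an induced WIASI must also keep the edge function injective, and the new singleton or $|f(w)|$-element label on $uw$ could in principle collide with an existing edge label; you gloss this, but so does the paper's own treatment of induced IASIs, so it does not distinguish your argument from the intended one. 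With the sumset lemma inserted, your proof is complete and matches the standard argument for this proposition.
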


\begin{theorem}\label{T-WIASI-AG2}
The line graph $L(G)$ of a WIASI graph $G$ admits an induced WIASI if and only if at least one edge in each pair of adjacent edges in $G$, is mono-indexed in $G$. 
\end{theorem}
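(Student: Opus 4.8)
The plan is to analyze the induced set-labeling of $L(G)$ directly, exploiting the dictionary between $L(G)$ and $G$. Recall that the vertices of $L(G)$ are the edges of $G$, and that two vertices of $L(G)$ are adjacent precisely when the corresponding edges of $G$ are adjacent (share an end vertex). Under the induced labeling, each vertex $e$ of $L(G)$ carries the set-label $g(e)=f^+(e)$, where $f$ is the given WIASI of $G$. The first step is to record two translations: a vertex $e$ of $L(G)$ is mono-indexed if and only if $|f^+(e)|=1$, that is, if and only if the edge $e$ is mono-indexed in $G$; and an edge of $L(G)$ is exactly an unordered pair of adjacent edges of $G$. With this dictionary in hand, the theorem becomes a statement about when the induced labeling $g$ is a WIASI.

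For the forward implication, I would apply the observation recorded earlier in the paper that every graph admitting a WIASI has at least one mono-indexed end vertex on each of its edges. Applying this to $L(G)$ equipped with the induced WIASI $g$, each edge of $L(G)$ has a mono-indexed end vertex; translating through the dictionary, this says precisely that in every pair of adjacent edges of $G$ at least one is mono-indexed in $G$.

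For the converse, I would assume the mono-indexing hypothesis and verify that $g$ satisfies the conditions of a WIASI. Injectivity of $g$ on $V(L(G))$ is inherited directly from the injectivity of $f^+$, which holds because $f$ is an IASI of $G$. For the weak condition, I would take an arbitrary edge $e_1e_2$ of $L(G)$; by hypothesis at least one of $e_1,e_2$, say $e_1$, is mono-indexed in $G$, so $|g(e_1)|=1$. Then $g^+(e_1e_2)=g(e_1)+g(e_2)$ is a translate of $g(e_2)$, giving $|g^+(e_1e_2)|=|g(e_2)|=\max(|g(e_1)|,|g(e_2)|)$, which is the weak condition required by Lemma~\ref{L-IASI3}. (The same cardinality identity underlies the forward direction as well, since $|A+B|=\max(|A|,|B|)$ forces one of $A,B$ to be a singleton, because $|A+B|\ge|A|+|B|-1$ for finite sets of integers.)

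The step I expect to be the main obstacle is the remaining part of the IASI requirement, namely injectivity of the induced edge-labeling $g^+$ on $E(L(G))$. This does not follow formally from the data of $f$: an edge of $L(G)$ determined by adjacent edges $e_i,e_j$ of $G$ is labeled by the sumset $f^+(e_i)+f^+(e_j)$, and two distinct adjacent pairs could a priori yield equal sumsets. The careful part of the argument is to rule out such coincidences---under the mono-indexing hypothesis each such label is a translate of a single inherited edge-label, so one argues from the injectivity of $f^+$ together with the distinctness of the shared vertices and the translation constants, or, failing a label-free argument, one selects the original WIASI $f$ on a sufficiently spread ground set so that no two of these sumsets collide. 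This is where the bulk of the bookkeeping lies, while the weak cardinality condition remains the genuine structural content that matches the mono-indexing hypothesis exactly.
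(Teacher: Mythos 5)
Your proposal follows essentially the same route as the paper: the paper's entire proof is the two-sentence remark following the theorem statement, namely that adjacency of vertices in $L(G)$ corresponds to adjacency of edges in $G$, and that a vertex of $L(G)$ is mono-indexed exactly when the corresponding edge of $G$ is, so the WIASI requirement that every edge have a mono-indexed end vertex translates verbatim into the stated condition. Your dictionary, your forward direction via that requirement, and your converse via the translation argument (a singleton label forces $|g(e_1)+g(e_2)|=|g(e_2)|$, and conversely $|A+B|=\max(|A|,|B|)$ forces a singleton since $|A+B|\ge |A|+|B|-1$ for finite sets of integers) are precisely the content the paper compresses into that remark, and they are correct.

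Where you go beyond the paper is the injectivity of the induced edge function $g^+$ on $E(L(G))$, which you rightly flag as not formally guaranteed: two distinct pairs of adjacent edges of $G$ could a priori receive equal sumsets, since each induced edge label is merely a translate of an inherited edge label of $G$. The paper is entirely silent on this point; it treats the mono-indexing translation as the whole proof, implicitly assuming the inherited labels behave injectively. So the obstacle you identify is a genuine gap in the paper's own treatment rather than a defect of your approach---but be aware that your fallback repair, re-choosing $f$ on a ``sufficiently spread'' ground set, sits uneasily with the paper's definition of an induced IASI, under which the set-labels of $L(G)$ are fixed to be the $f^+$-labels of the \emph{given} $f$. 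That repair is legitimate only under an existential reading of the theorem (some WIASI of $G$ realizing the stated mono-indexing pattern induces a WIASI of $L(G)$), which, given the paper's loose conventions, is plausibly the intended one; if instead $f$ is held fixed, the injectivity of $g^+$ must be verified or imposed as a hypothesis, and neither you nor the paper actually does so.
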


We know that the adjacency of two vertices in the line graph $L(G)$ corresponds to the adjacency of corresponding edges in the graph $G$. Hence, the proof of this theorem is an immediate consequence of the fact that a vertex in $L(G)$ is mono-indexed if the corresponding edge in $G$ is mono-indexed.

\begin{theorem}\label{T-WIASI-AG3}
The total graph $T(G)$ of a weak IASI graph $G$ admits an induced WIASI if and only if the WIASI defined on $G$ is $1$-uniform.
\end{theorem}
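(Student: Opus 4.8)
The plan is to exploit the structure of the total graph together with the fundamental cardinality constraint that forces a mono-indexed end vertex on every edge of a WIASI graph. Recall that $V(T(G)) = V(G) \cup E(G)$, and two elements are adjacent in $T(G)$ exactly when they are adjacent or incident in $G$; thus $T(G)$ has three kinds of edges, namely those arising from adjacent vertices of $G$, those arising from adjacent edges of $G$, and those arising from a vertex of $G$ incident with an edge of $G$. Under the induced labeling, a vertex $v$ retains its label $f(v)$ of set-indexing number $|f(v)|$, while an edge $e = uw$ becomes a vertex of $T(G)$ whose label is $f^+(e) = f(u)+f(w)$, of set-indexing number $\max(|f(u)|,|f(w)|)$ because $f$ is a WIASI on $G$. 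The quantitative engine of the argument is the sumset bound $|A+B| \ge |A|+|B|-1$, which gives $|A+B| > \max(|A|,|B|)$ whenever $|A|,|B| \ge 2$; equivalently, the weak condition $|f^+(xy)| = \max(|f(x)|,|f(y)|)$ can hold on an edge $xy$ of $T(G)$ only if at least one of $x, y$ is mono-indexed.

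For the forward implication I would argue by contradiction. Suppose $T(G)$ admits an induced WIASI but some vertex $v$ of $G$ is not mono-indexed, so $|f(v)| \ge 2$. Since $G$ has no isolated vertices, $v$ is incident with some edge $e = vw$, and hence $v$ and $e$ are adjacent in $T(G)$. The end vertex $e$ has set-indexing number $\max(|f(v)|,|f(w)|) \ge |f(v)| \ge 2$, so both end vertices of the incidence edge joining $v$ and $e$ are non-mono-indexed. By the remark above, the weak condition then fails on this edge, contradicting the assumption. Therefore every vertex of $G$ is mono-indexed; consequently every edge $uw$ satisfies $|f^+(uw)| = \max(|f(u)|,|f(w)|) = 1$, so $f$ is $1$-uniform.

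For the converse, suppose $f$ is $1$-uniform. Then $\max(|f(u)|,|f(w)|) = 1$ for every edge $uw$, which forces $|f(u)| = |f(w)| = 1$; as $G$ has no isolated vertices, every vertex of $G$ is mono-indexed, and hence every edge of $G$ is mono-indexed as well. Thus every vertex of $T(G)$ carries a singleton set-label, every edge of $T(G)$ has both end vertices mono-indexed, and the sum set of two singletons is again a singleton, so $|f^+(xy)| = 1 = \max(|f(x)|,|f(y)|)$ for every edge $xy$ of $T(G)$. The injectivity required of an IASI is inherited from the injectivity of $f$ and $g_f$ on $G$, after a harmless choice of the underlying singletons that avoids coincidences between vertex-labels and edge-labels. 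Hence the induced labeling is a WIASI of $T(G)$.

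The main obstacle, and the point I would emphasize, is isolating which family of edges of $T(G)$ drives the equivalence: the vertex-vertex adjacencies merely reproduce the WIASI condition already present in $G$, and the edge-edge adjacencies are comparatively mild, but the incidence edges joining a vertex $v$ to an edge $e = vw$ are unavoidable for every vertex (as $G$ has no isolated vertices) and simultaneously constrain the vertex label $f(v)$ together with the edge label $f^+(e)$, whose set-indexing number is at least $|f(v)|$. It is precisely this coupling, combined with the sumset inequality, that forces every vertex to be mono-indexed and thereby collapses the whole labeling to the $1$-uniform case.
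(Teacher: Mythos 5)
Your proof is correct and takes essentially the same route as the paper: the paper's one-sentence argument is precisely your forward direction, namely that a non-mono-indexed vertex $v$ of $G$ and an incident edge $e=vw$, whose set-indexing number $\max(|f(v)|,|f(w)|)\ge |f(v)|\ge 2$, are adjacent vertices of $T(G)$ with neither mono-indexed, which violates the weak condition on that incidence edge. Your explicit treatment of the converse, including the injectivity caveat about choosing the underlying singletons to avoid label coincidences, fills in a direction the paper leaves entirely implicit, but it is a completion rather than a different approach.
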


The proof follows from the fact that if $G$ is not $1$-uniform, then the adjacency of vertices in $T(G)$ corresponding to an edge and its end vertex that are not mono-indexed in $G$ makes a contradicts the definition of a WIASI. 

A {\em subdivision} of a graph $G$ is the graph obtained by introducing a new vertex to some or all edges of $G$. If a new vertex is introduced to all edges of $G$, then the new graph is said to be a {\em complete subdivision of $G$}. Therefore,

\begin{theorem}\label{T-WIASI-AG4}
The complete subdivision of a WIASI graph $G$ admits an induced WIASI  if and only if $G$ is $1$-uniform.
\end{theorem}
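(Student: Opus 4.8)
The plan is to prove the biconditional in Theorem~\ref{T-WIASI-AG4} by analyzing the structure of a complete subdivision and determining exactly when the induced set-labeling remains a WIASI. Recall that the complete subdivision $S(G)$ is obtained by inserting a new vertex $w_e$ on every edge $e=uv$ of $G$, so that each original edge $uv$ is replaced by a path $u\,w_e\,v$. Under the induced IASI, the new vertex $w_e$ inherits the set-label of the replaced edge $e$, that is $f(w_e)=f^+(uv)=f(u)+f(v)$, while the original vertices retain their labels. Thus every edge of $S(G)$ is of the form $u\,w_e$ or $w_e\,v$, joining an original vertex to a subdivision vertex.

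First I would treat the easy direction. If $G$ is $1$-uniform, then every edge $e=uv$ of $G$ satisfies $|f^+(uv)|=1$, and by Lemma~\ref{L-IASI3} this forces $|f(u)|=|f(v)|=1$; hence every original vertex is mono-indexed and every subdivision vertex $w_e$ is also mono-indexed since $|f(w_e)|=|f^+(uv)|=1$. Consequently every vertex of $S(G)$ is mono-indexed, so $S(G)$ is $1$-uniform and trivially satisfies the WIASI condition $|f^+(xy)|=\max(|f(x)|,|f(y)|)$ on each edge. This shows $1$-uniformity of $G$ is sufficient.

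Next I would prove the contrapositive of the necessity direction: if $G$ is not $1$-uniform, then the induced labeling fails to be a WIASI on $S(G)$. If $G$ is not $1$-uniform, there exists an edge $e=uv$ with $|f^+(uv)|>1$, so the subdivision vertex $w_e$ is not mono-indexed, $|f(w_e)|\ge 2$. By the observation recorded before Proposition~\ref{P-WIASI2}, a WIASI requires at least one end vertex of every edge to be mono-indexed; I would examine the edge $u\,w_e$ (and symmetrically $v\,w_e$) of $S(G)$. Since $w_e$ is not mono-indexed, the WIASI condition on this edge would force $u$ to be mono-indexed. The crux is then to show that at least one of $u,v$ is necessarily non-mono-indexed, producing an edge of $S(G)$ with both ends non-mono-indexed. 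Here I would invoke Lemma~\ref{L-IASI3} again: from $|f^+(uv)|>1$ we get $\max(|f(u)|,|f(v)|)\ge |f^+(uv)|>1$, so at least one of $u,v$ — say $v$ — is itself non-mono-indexed. Then the edge $v\,w_e$ of $S(G)$ joins two non-mono-indexed vertices, and by the structural characterization of WIASIs one checks directly that $|f^+(v\,w_e)|>\max(|f(v)|,|f(w_e)|)$, violating the defining inequality.

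The step I expect to be the main obstacle is the final inequality verification for the edge $v\,w_e$, namely confirming that when both $v$ and $w_e=f(u)+f(v)$ are non-mono-indexed the sumset $f(v)+\bigl(f(u)+f(v)\bigr)$ genuinely exceeds $\max(|f(v)|,|f(u)+f(v)|)$ in cardinality. This requires a careful sumset-cardinality argument rather than a purely combinatorial one, since one must rule out degenerate arithmetic-progression configurations where sumsets could collapse; I would handle it by appealing to the weak-IASI characterization that a non-trivial sum of two non-singleton sets can never achieve the weak bound, so that $S(G)$ cannot admit a WIASI unless every such problematic edge is absent, which happens precisely when $G$ is $1$-uniform.
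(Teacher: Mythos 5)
Your proof is correct and takes essentially the same route as the paper's: the paper's proof is exactly your necessity argument (pick a non-mono-indexed edge $e=uv$, WLOG with $v$ non-mono-indexed; the subdivision vertex $w_e$ inherits the non-singleton label $f^+(uv)$, so the edge $vw_e$ has both ends non-mono-indexed, contradicting the WIASI requirement), with the sufficiency direction left implicit just as you spell it out. Two minor notes: your appeal to Lemma~\ref{L-IASI3} in the form $\max(|f(u)|,|f(v)|)\ge |f^+(uv)|$ reverses that lemma's inequality --- the correct justification is the WIASI equality $|f^+(uv)|=\max(|f(u)|,|f(v)|)$ or the upper bound $|f^+(uv)|\le |f(u)|\,|f(v)|$ --- and the final step you flag as the main obstacle needs no delicate analysis, since for finite sets of integers $|A+B|\ge |A|+|B|-1 > \max(|A|,|B|)$ whenever $|A|,|B|\ge 2$, so no degenerate arithmetic-progression configurations can collapse the sumset to the weak bound.
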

\begin{proof}
Let $e=uv$ be an edge of $G$. If $e$ is not mono-indexed, then either $u$ or $v$ is not mono-indexed. Without loss of generality, let $v$ is not mono-indexed. Now, introduce a new vertex $w$ to the edge $uv$. Then, the edge $uv$ will be replaced by two edges $uw$ and $vw$. If we label $w$ by the same set-label of  $e$, then the edge $wv$ has both its end vertices not mono-indexed. Therefore, the IASI is not a WIASI.
\end{proof}

\section{WIASIs of Some Graphs and Graph Powers}

In this section, we mention the sparing number of the graphs with some structural properties and their powers. For all graph classes mentioned here, refer to \cite{BLS}, \cite{BHLL} and \cite{GCO}.

The $r$-th power of a simple graph $G$ is the graph $G^r$ whose vertex set is $V$, two distinct vertices being adjacent in $G^r$ if and only if their distance in $G$ is at most $r$.
First, note the following theorem on graph powers.

\begin{theorem}\label{T-Gdiam}
\cite{EWW} Let $G$ be a graph with diameter $d$. Then, $G^d$ is a complete graph.
\end{theorem}

The following theorems explains about the sparing number of arbitrary powers of paths and cycles.

\begin{theorem}\label{T-WIAISI-GT-P1}
\cite{GS9} Let $P_{n-1}$ be a path graph on $n$ vertices. Then, its spring number is  $\frac{r-1}{2(r+1)}[r(2n-1-r)+2i]$.
\end{theorem}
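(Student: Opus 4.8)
The plan is to reduce the computation of $\varphi(P_{n-1}^r)$ to an extremal problem about independent sets. In any primary WIASI, the vertices carrying non-singleton set-labels must be pairwise non-adjacent (two adjacent non-singleton vertices would produce an edge whose set-indexing number exceeds the maximum of the two cardinalities), so they form an independent set $I$, and an edge is mono-indexed exactly when neither of its end vertices lies in $I$. Since $I$ is independent, no edge has both ends in $I$, so the number of edges meeting $I$ is $\sum_{v\in I} d(v)$ and hence $\varphi(P_{n-1}^r)=|E(P_{n-1}^r)|-\max_I \sum_{v\in I} d(v)$, the maximum ranging over all independent sets $I$; this is exactly the content of Theorem \ref{T-WIASI14a} once the optimal $I$ is known to be a maximum independent set. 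First I would record the two ingredients this formula needs: the edge count and the degree sequence of $P_{n-1}^r$.

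Counting edges by their difference, the number of pairs $\{u,v\}$ with $|u-v|=d$ is $n-d$ for $1\le d\le r$, so $|E(P_{n-1}^r)|=\sum_{d=1}^{r}(n-d)=rn-\binom{r+1}{2}$. The degree of vertex $v$ is $d(v)=\min(v-1,r)+\min(n-v,r)$, which equals $2r$ for every interior vertex (those with $r<v\le n-r$) and tapers off near the two ends. The next step is to identify the independent set maximizing $\sum_{v\in I} d(v)$. An independent set of $P_{n-1}^r$ is precisely a set of indices in which consecutive chosen values differ by at least $r+1$, so a maximum independent set has $\lceil n/(r+1)\rceil$ vertices, obtained by taking every $(r+1)$-th vertex. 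Because $P_{n-1}^r$ is a unit-interval graph, an exchange argument shows that sliding any chosen vertex toward the interior never decreases its degree and never destroys independence, so an equally spaced maximum independent set simultaneously maximizes the incident-degree sum, which is what legitimizes the use of Theorem \ref{T-WIASI14a}.

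Finally I would evaluate $\max_I\sum_{v\in I} d(v)$ for the equally spaced set and subtract it from $rn-\binom{r+1}{2}$. Writing $n=q(r+1)+i$ with $0\le i\le r$, the chosen vertices split into interior vertices each contributing $2r$ and a bounded number of boundary vertices whose degrees depend on $i$; collecting the interior contributions produces the factor $\tfrac{r-1}{r+1}$ appearing in the claimed formula, while the boundary vertices contribute the residual term governed by $i$. The hard part will be precisely this boundary bookkeeping: one must track how many chosen vertices fall within distance $r$ of each end, how their reduced degrees depend on the residue $i\equiv n \pmod{r+1}$, and which offset of the spaced independent set is optimal for that residue class. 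This case analysis on $i$ is where the parameter $i$ in the statement originates and is the only delicate step; once the boundary degrees are summed correctly, simplifying $rn-\binom{r+1}{2}-\sum_{v\in I}d(v)$ and grouping over residue classes should yield the stated closed form.
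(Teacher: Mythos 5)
Your reduction is the right one, and almost certainly the intended one: the paper states this theorem without proof (it is quoted from \cite{GS9}), and the natural route is exactly $\varphi(P_{n-1}^r)=|E(P_{n-1}^r)|-\max_I\sum_{v\in I}d(v)$ over independent sets $I$, with $|E(P_{n-1}^r)|=rn-\binom{r+1}{2}$ and $d(v)=\min(v-1,r)+\min(n-v,r)$, all of which you have correctly. But your proposal stops precisely at the step that decides the theorem, and that step does not go the way you assume. First, the exchange argument is false as stated: sliding a chosen vertex toward the interior can destroy independence (it may come within distance $r$ of the next chosen vertex), and when $n=q(r+1)+i$ with $1\le i\le r$ the optimal $I$ is in fact forced to push its extreme vertices to the ends of the path. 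With $|I|=q+1$ the total positional slack is only $i-1<2r$, so the two extreme chosen vertices jointly lose at least $2r-(i-1)$ from the interior degree $2r$, giving $\max_I\sum_{v\in I}d(v)=2rq+i-1$; the alternative $|I|=q$ loses only $r-i$ but has one fewer vertex, giving $2rq-r+i$, smaller by $r-1$. So the optimum does sit at a maximum independent set, but establishing this requires exactly the trade-off computation you defer, not a sliding heuristic; it is also what legitimizes invoking Theorem \ref{T-WIASI14a}, since in general $\max_I\sum_{v\in I}d(v)$ need not be attained at a maximum-cardinality $I$.

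Second, and more seriously, carrying out the boundary bookkeeping does \emph{not} ``yield the stated closed form.'' With $i=n\bmod (r+1)$ (the convention of the companion result, Theorem \ref{T-WIAISI-GT-P2}), the computation above gives $\varphi(P_{n-1}^r)=\frac{r-1}{2(r+1)}\left[2rn+2i-(r+1)(r+2)\right]$ for $i\ge 1$, and $\frac{r-1}{2(r+1)}\,r(2n-1-r)$ for $i=0$; the formula in the statement agrees with this only when $i=0$ and exceeds the true value by $r-1$ otherwise. A minimal counterexample: take $r=2$, $n=4$. Then $P_3^2$ has $5$ edges, and labeling vertices $1$ and $4$ with non-singleton sets leaves only the edge $\{2,3\}$ mono-indexed, so $\varphi(P_3^2)=1$, whereas the stated formula with $i=1$ gives $\frac{1}{6}[2(8-1-2)+2]=2$. (The cycle formula of Theorem \ref{T-WIAISI-GT-P2} checks out under the same method, so the discrepancy is specific to the path statement, which is in any case garbled: it never defines $i$ nor mentions the power $r$.) The gap in your proposal is therefore not routine bookkeeping left to the reader: the deferred case analysis is the entire content of the theorem, and done honestly it shows the printed formula cannot be reproduced as it stands.
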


\begin{theorem}\label{T-WIAISI-GT-P2}
Let $C_n$ be a cycle on $n$ vertices and let $r$ be a positive integer less than $\lfloor \frac{n}{2} \rfloor $. Then the sparing number of the the $r$-th power of $C_n$ is given by $\varphi(C_n^r)=\frac{r}{r+1}((r-1)n+2i) ~~\text{if} ~ n\equiv i~(mod~(r+1))$.
\end{theorem}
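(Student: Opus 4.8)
The plan is to reduce the computation of $\varphi(C_n^r)$ to the general principle of Theorem \ref{T-WIASI9}, namely that the sparing number equals the minimum number of edges whose removal leaves a bipartite graph, and then to count those edges by a careful labeling of the vertices of $C_n^r$. First I would fix the vertex set $\{v_0, v_1, \dots, v_{n-1}\}$ of $C_n$ arranged cyclically, so that in $C_n^r$ two vertices $v_a$ and $v_b$ are adjacent exactly when their cyclic distance is at most $r$. The natural labeling strategy, following the pattern used for odd cycles and wheels earlier in the paper, is to partition the vertices into consecutive blocks of length $r+1$ and, within each block, label exactly one vertex by a singleton (mono-indexed) set and the remaining $r$ vertices by distinct non-singleton sets. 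Because any two non-singleton vertices that are adjacent would force a non-weak edge, I must show that this block structure guarantees that the non-singleton vertices form an independent set in $C_n^r$, which holds precisely because within any window of $r+1$ consecutive vertices the chosen singleton separates the non-singleton vertices from those in the neighboring block at cyclic distance $\le r$.

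Next I would count the mono-indexed edges produced by this labeling. Writing $n = q(r+1) + i$ with $0 \le i \le r$, the vertices split into $q$ full blocks plus a residual block of size $i$; the mono-indexed edges are exactly those joining two singleton-labeled vertices, i.e. edges between the designated singleton vertex of one block and singleton vertices of nearby blocks lying within cyclic distance $r$. Each singleton vertex is adjacent in $C_n^r$ to the singletons of the blocks on either side that fall within distance $r$, and since consecutive singletons are spaced $r+1$ apart, only a controlled number of such pairs arise. Carrying out this count over the $q$ full blocks contributes a term proportional to $(r-1)n$, while the residual block of size $i$ contributes the correction $2i$, and after collecting terms and dividing by $r+1$ one obtains $\frac{r}{r+1}\left((r-1)n + 2i\right)$.

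The remaining and essential step is the matching lower bound: I must verify that no labeling can do better, i.e. that the proposed number of mono-indexed edges is forced. For this I would appeal to Theorem \ref{T-WIASI9} directly and argue that any bipartition of $V(C_n^r)$ after edge deletion cannot avoid more edges than the formula predicts, equivalently that $C_n^r$ contains that many edge-disjoint obstructions to bipartiteness (odd structures) in the relevant window. The cleanest route is to show the labeling above is \emph{primary}, meaning the independent set of non-singleton vertices it uses is of maximum possible size subject to the adjacency constraints of $C_n^r$; then the number of mono-indexed edges is minimized automatically.

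The hard part will be this lower-bound/optimality argument: proving that the block labeling is genuinely optimal requires understanding the independence structure of the circulant-type graph $C_n^r$ and ruling out cleverer labelings, especially handling the cyclic wrap-around when $r+1 \nmid n$ (the case $i \neq 0$), where the residual block interacts with the first block and can create or destroy mono-indexed edges in a way not present in the path case of Theorem \ref{T-WIAISI-GT-P1}. I expect the hypothesis $r < \lfloor n/2 \rfloor$ to be exactly what is needed to keep $C_n^r$ from collapsing into a complete graph (cf. Theorem \ref{T-Gdiam}), so that the block argument remains valid and the formula does not degenerate.
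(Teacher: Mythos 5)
Your construction is inverted, and this is fatal. In a weak IASI, at least one end of every edge must be mono-indexed, so the vertices carrying non-singleton labels must form an \emph{independent} set of $C_n^r$. In your scheme each block of $r+1$ consecutive vertices contains $r$ non-singleton vertices; but any two vertices within such a block are at cyclic distance at most $r$ and hence adjacent in $C_n^r$, so these $r$ vertices are pairwise adjacent and the labeling is not a weak IASI at all. The claim that "the chosen singleton separates the non-singleton vertices" has no content here: adjacency in the power $C_n^r$ is determined solely by distance in $C_n$, not by the labels of intermediate vertices. Your edge count collapses for the same reason: your singletons are spaced $r+1$ apart, so no two of them are adjacent in $C_n^r$, and your labeling would produce \emph{zero} mono-indexed edges, contradicting the very formula you are trying to prove.

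The correct construction is the mirror image of yours: with $n=q(r+1)+i$, place exactly \emph{one} non-singleton vertex in each block of $r+1$ consecutive vertices (so $q=\frac{n-i}{r+1}$ of them, pairwise at distance at least $r+1$, hence independent) and label all remaining vertices by singletons. Since $r<\lfloor \frac{n}{2}\rfloor$ makes $C_n^r$ a $2r$-regular graph with $nr$ edges, and every edge not incident to the independent set $I$ is mono-indexed, this yields $\varphi(C_n^r)\le nr-2rq=\frac{r}{r+1}\bigl((r-1)n+2i\bigr)$. Your lower-bound plan is sound in spirit and is exactly what completes the argument: because the graph is regular, the number of non-mono-indexed edges under any weak IASI is $2r|I|$, so minimizing mono-indexed edges amounts to maximizing $|I|$, and the known fact $\alpha(C_n^r)=\lfloor \frac{n}{r+1}\rfloor$ gives the matching bound — but this cannot rescue the construction as written. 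Note also that the paper states this theorem without proof (it is quoted from \cite{GS9}), so your attempt must stand on its own, and as written it fails at the construction step.
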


Invoking Theorem \ref{T-Gdiam}, we verify the existence of weak IASI for complete bipartite graphs.

\begin{theorem}\label{T-WIAISI-GT-P3}
The sparing number of the square of a complete bipartite graph $K_{m,n}$ is $\frac{1}{2}(m+n-1)(m+n-1)$.
\end{theorem}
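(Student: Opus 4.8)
The plan is to reduce this to two results already in hand: Theorem~\ref{T-Gdiam}, which says that the $d$-th power of a diameter-$d$ graph is complete, and Theorem~\ref{T-WIASI8}, which gives the sparing number of a complete graph. The first step I would take is to determine the diameter of $K_{m,n}$. Assuming $m+n \ge 3$ so that at least one part contains two vertices, every vertex of one part is adjacent to every vertex of the other; hence two vertices lying in the same part are joined by a path of length $2$ through any vertex of the opposite part, while two vertices in different parts are already adjacent. Thus no pair is farther than distance $2$, and same-part pairs realize distance $2$, so $\mathrm{diam}(K_{m,n}) = 2$.

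With the diameter pinned down, the second step is a direct appeal to Theorem~\ref{T-Gdiam}: since $K_{m,n}$ has diameter $2$, its square $K_{m,n}^2$ is a complete graph. The power operation leaves the vertex set unchanged and $|V(K_{m,n})| = m+n$, so I would conclude $K_{m,n}^2 = K_{m+n}$.

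The final step is a single substitution. By Theorem~\ref{T-WIASI8}, the sparing number of $K_N$ is $\tfrac{1}{2}(N-1)(N-2)$; putting $N = m+n$ gives $\varphi\!\left(K_{m,n}^2\right) = \tfrac{1}{2}(m+n-1)(m+n-2)$.

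Because the argument merely chains two previously proved theorems, I do not expect a genuine analytic obstacle. The only points demanding care are the edge cases: for $m=n=1$ the graph $K_{1,1}$ is a single edge of diameter $1$, yet its square is still $K_2 = K_{m+n}$, so the conclusion survives. I would also flag that the formula as printed in the statement, $\tfrac{1}{2}(m+n-1)(m+n-1)$, appears to contain a typographical slip; to agree with Theorem~\ref{T-WIASI8} the second factor should read $(m+n-2)$, which is precisely the value the derivation above produces.
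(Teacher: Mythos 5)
Your proposal is correct and follows exactly the route the paper intends: the theorem is stated immediately after the remark ``Invoking Theorem~\ref{T-Gdiam}, we verify\ldots'', so the paper's (unwritten) proof is precisely your chain $\mathrm{diam}(K_{m,n})=2$, hence $K_{m,n}^2=K_{m+n}$ by Theorem~\ref{T-Gdiam}, hence $\varphi(K_{m,n}^2)=\frac{1}{2}(m+n-1)(m+n-2)$ by Theorem~\ref{T-WIASI8}. You are also right that the printed formula $\frac{1}{2}(m+n-1)(m+n-1)$ is a typographical slip, as confirmed by the analogous Theorem~\ref{T-WIAISI-GC-P4}, where the square of a complete split graph on $r+s$ vertices yields $\frac{1}{2}(r+s-1)(r+s-2)$.
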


First we consider a sun graph, defined in \cite{BLS}. 

An {\em $n$-sun} or a {\em trampoline} is a chordal graph on $2n$ vertices, where $n\ge 3$, whose vertex set can be partitioned into two sets $U = \{u_1,u_2,c_3,\ldots, u_n\}$ and $W = \{w_1,w_2,w_3,\ldots, w_n\}$ such that $W$ is an independent set of $G$ and $w_j$ is adjacent to $u_i$ if and only if $j=i$ or $j=i+1~(mod ~ n)$. {\em A complete sun} is a sun $G$ where the induced subgraph $\langle U \rangle$ is complete.

\begin{theorem}\label{T-WIASI-GC1}
\cite{GS6} The sparing number of a complete sun graph $S_{n}$ is $\frac{1}{2}n(n-1)$.
\end{theorem}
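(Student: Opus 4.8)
The plan is to reduce the computation of $\varphi(S_n)$ to a vertex-cover optimisation and then exhibit an explicit primary WIASI meeting the bound. By Theorem~\ref{T-WIASI9}, $\varphi(S_n)$ equals the number of edges one must delete to reach a bipartition, and by Theorem~\ref{T-WIASI14a} this is realised by choosing the non-mono-indexed vertices to form an independent set $I$ with $\sum_{v\in I} d(v)$ as large as possible, giving $\varphi(S_n)=|E(S_n)|-\max_{I}\sum_{v\in I}d(v)$. So first I would record the elementary data of the complete sun: the $K_n$ on $U$ together with the two edges $w_ju_{j-1},\ w_ju_j$ for each $j$ give $|E(S_n)|=\binom{n}{2}+2n=\tfrac12 n(n+3)$, while the degrees are $d(u_i)=n+1$ (the $n-1$ clique edges plus the two sun edges at $u_i$) and $d(w_j)=2$.

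For the upper bound I would exhibit the natural labelling suggested by the structure. Since $\langle U\rangle=K_n$, at most one vertex of $U$ may be non-mono-indexed; the obvious candidate is therefore to label every $u_i$ by a distinct singleton and every $w_j$ by a distinct non-singleton set. This is a legitimate WIASI because $W$ is independent and the two neighbours $u_{j-1},u_j$ of each $w_j$ are mono-indexed, so every edge still carries a mono-indexed end. Under it the only mono-indexed edges are the $\binom{n}{2}$ edges of the clique on $U$, every sun edge being spared, which gives $\varphi(S_n)\le \tfrac12 n(n-1)$.

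The hard part, and where I expect the real obstacle, is the matching lower bound, i.e.\ proving that no independent set of non-mono-indexed vertices beats the choice $I=W$. Any independent set meets the clique $U$ in at most one vertex, so the only genuine alternative to $I=W$ (of total degree $2n$) is $I=\{u_i\}\cup W'$ with $W'\subseteq W\setminus\{w_i,w_{i+1}\}$, of total degree $(n+1)+2|W'|\le (n+1)+2(n-2)=3n-3$. Admitting a single $u_i$ thus spares the $n-1$ clique edges at $u_i$ at the cost of forbidding the two sun vertices $w_i,w_{i+1}$ from being non-mono-indexed, and the whole theorem turns on showing this exchange never helps. I would make this comparison explicit, also ruling out cleverer uses of the triangles $\{u_{j-1},u_j,w_j\}$; controlling precisely this trade-off is the crux on which the claimed value $\tfrac12 n(n-1)$ rests, and it is exactly the step I would scrutinise most carefully before trusting the stated formula.
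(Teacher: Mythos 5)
You have, in fact, found the problem rather than left a gap, and you should have trusted your own arithmetic one line further. First, a point of comparison: the paper offers no proof of this theorem at all --- it is quoted from \cite{GS6} --- so the only test available is internal consistency with the paper's other results, and your reduction passes it: since every edge of a WIASI graph must have a mono-indexed end, the non-mono-indexed vertices form an independent set $I$, and the mono-indexed edges are exactly those missing $I$, so $\varphi(S_n)=|E(S_n)|-\max_{I}\sum_{v\in I}d(v)$, precisely the mechanism behind Theorem \ref{T-WIASI14a}. (Your gloss of Theorem \ref{T-WIASI9} as ``delete edges to reach a bipartition'' is slightly off --- one colour class must be independent in $G$ itself, not merely in $G-E'$, which is why the sparing number of $K_4$ is $3$ and not the max-cut value $2$ --- but the formula you actually use is the correct one.) Your data are also correct: $|E(S_n)|=\binom{n}{2}+2n$, $d(u_i)=n+1$, $d(w_j)=2$, and the labelling with $I=W$ gives the upper bound $\varphi(S_n)\le\binom{n}{2}$.

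The step you deferred as ``the crux'' is not merely unproved; it is false, and your own displayed inequality shows it. For $I=\{u_i\}\cup\bigl(W\setminus\{w_i,w_{i+1}\}\bigr)$ you computed the degree sum $(n+1)+2(n-2)=3n-3$, while $I=W$ gives $2n$; but $3n-3>2n$ whenever $n>3$, so the exchange \emph{does} help for every $n\ge 4$, sparing $n-3$ additional edges and yielding $\varphi(S_n)\le \tfrac{1}{2}n(n+3)-(3n-3)=\tfrac{1}{2}(n^2-3n+6)=\binom{n}{2}-(n-3)$, strictly below the claimed value. Concretely, in $S_4$ take $I=\{u_1,w_3,w_4\}$: this is independent (the $W$-neighbours of $u_1$ are $w_1,w_2$), it meets $9$ of the $14$ edges, and a generic choice of set-labels makes this a valid WIASI with only $5$ mono-indexed edges, whereas the theorem asserts $6$. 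Since at most one vertex of the clique $U$ can lie in $I$ and its $W$-neighbourhood has size $2$, the two candidates you listed are exhaustive, so the stated formula $\tfrac{1}{2}n(n-1)$ is correct only at $n=3$ (where $3n-3=2n$). So the honest conclusion of your argument is a disproof of the statement as recorded for $n\ge 4$ --- either the theorem in \cite{GS6} carries hypotheses or a definition of $S_n$ not reproduced here, or the value itself is wrong --- and the only defect in your write-up is stopping at ``scrutinise this step'' instead of performing the comparison you had already set up.
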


The sparing number of a sun graph, that is not complete, is yet to be estimated because of the uncertainty in the adjacency pattern between $U$ and $W$. The sparing number of the square of a complete sun graph is given in the following theorem.

\begin{theorem}\label{T-WIASI-GC-P1}
\cite{GS9} Let $G$ be the complete sun graph on $2n$ vertices. Then, the sparing number of $G^2$ is 
\begin{equation*}
\varphi(G^2)=
\begin{cases}
n^2+1 & \mbox{if ~ $n$ is odd}\\
\frac{n}{2}(2n-1) & \mbox{if ~ $n$ is even}.
\end{cases}
\end{equation*}
\end{theorem}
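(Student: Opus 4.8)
The plan is to first pin down the precise structure of $G^{2}$ and then reduce the computation of $\varphi(G^{2})$ to a maximum weighted independent set problem. Write $U=\{u_1,\dots,u_n\}$ and $W=\{w_1,\dots,w_n\}$ for the two parts of the complete sun $G$, so that $\langle U\rangle=K_n$, $W$ is independent, and $w_j\sim u_j,u_{j-1}$ (indices mod $n$). In $G$ every pair of $u$-vertices is at distance $1$, each $w_j$ is at distance $\le 2$ from every $u_i$, two consecutive $w$-vertices $w_j,w_{j\pm1}$ share a common $u$-neighbour and are therefore at distance $2$, while two non-consecutive $w$-vertices are at distance $3$. First I would record the consequences for $G^{2}$: the set $U$ remains a clique $K_n$, every $w_j$ becomes adjacent to every $u_i$, and the $w$-vertices induce exactly the cycle $C_n$ (only consecutive $w$'s are joined). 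Thus $G^{2}$ is the graph on $U\cup W$ obtained from $K_n$ on $U$, $C_n$ on $W$, and all $n^{2}$ edges of the complete bipartite graph between $U$ and $W$.

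Next I would compute the two numerical ingredients. Each $u_i$ is adjacent to the other $n-1$ vertices of $U$ and to all $n$ vertices of $W$, so $d_{G^{2}}(u_i)=2n-1$; each $w_j$ is adjacent to all $n$ vertices of $U$ and to its two cycle-neighbours, so $d_{G^{2}}(w_j)=n+2$. Summing degrees gives $|E(G^{2})|=\tfrac12\big(n(2n-1)+n(n+2)\big)=\tfrac12(3n^{2}+n)$.

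To evaluate the sparing number I would use the characterisation that, under a primary WIASI, the vertices carrying non-singleton set-labels form an independent set $I$, and the mono-indexed edges are precisely those with both ends outside $I$; since $I$ is independent, minimising their number means choosing $I$ to maximise $\sum_{v\in I}d_{G^{2}}(v)$, whence $\varphi(G^{2})=|E(G^{2})|-\max_{I}\sum_{v\in I}d_{G^{2}}(v)$, exactly as in Theorem \ref{T-WIASI14a} and the independence-number results. The key structural step is to identify this optimal $I$. Because $U$ is a clique, $I$ contains at most one vertex of $U$; and because each $u_i$ is adjacent to every $w_j$, once $I$ meets $U$ it can contain no vertex of $W$. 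Hence $I$ is either a single $u$-vertex (weight $2n-1$) or an independent set of the cycle $C_n$ induced on $W$ (weight $|I|(n+2)$ with $|I|\le\lfloor n/2\rfloor$). For $n\ge 3$ the latter option dominates, so the optimum is a maximum independent set of $C_n$, giving $\max_{I}\sum_{v\in I}d_{G^{2}}(v)=\lfloor n/2\rfloor(n+2)$.

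Finally I would substitute and split on the parity of $n$. For odd $n$, $\lfloor n/2\rfloor=\tfrac{n-1}{2}$ gives $\varphi(G^{2})=\tfrac12(3n^{2}+n)-\tfrac{(n-1)(n+2)}{2}=n^{2}+1$; for even $n$, $\lfloor n/2\rfloor=\tfrac{n}{2}$ gives $\varphi(G^{2})=\tfrac12(3n^{2}+n)-\tfrac{n(n+2)}{2}=\tfrac{n}{2}(2n-1)$, which are the two claimed values. I expect the main obstacle to be the structural reduction in the third paragraph, namely arguing carefully from the distance pattern in the complete sun that the $W$-vertices induce precisely $C_n$ (so their independence number is $\lfloor n/2\rfloor$ and not larger) and that no independent set can profitably combine $U$- and $W$-vertices; once the optimal $I$ is correctly identified, the degree and edge counts together with the parity case-split are routine.
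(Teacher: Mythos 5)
Your proposal is correct, and its arithmetic checks out: $G^{2}$ is indeed $K_n$ on $U$ plus $C_n$ on $W$ plus all $U$--$W$ edges, giving $|E(G^{2})|=\tfrac12(3n^{2}+n)$, and subtracting $\lfloor n/2\rfloor(n+2)$ yields $n^{2}+1$ for odd $n$ and $\tfrac{n}{2}(2n-1)$ for even $n$. Note, however, that the paper itself states this theorem without proof, citing \cite{GS9}, so there is no in-text proof to compare against line by line; the closest internal benchmark is the paper's proof of the gear-graph power result (Theorem \ref{T-WIAISI-GC-P9}), which proceeds in exactly the spirit you adopt: compute degrees in the power, sum them to count edges, compare the candidate labelings (a single high-degree clique/hub vertex versus an independent family of low-degree outer vertices), and subtract. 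Where you genuinely improve on that template is in framing the choice as a maximum-weight independent set problem via $\varphi(G^{2})=|E(G^{2})|-\max_{I}\sum_{v\in I}d_{G^{2}}(v)$ (the logic underlying Theorem \ref{T-WIASI14a}): the paper's analogous proofs merely compare two ad hoc options and ``adopt the second,'' whereas your structural argument (at most one vertex of the clique $U$, and any $U$-vertex excludes all of $W$) exhaustively rules out every other independent set, which is the step the paper's style leaves implicit. One small imprecision: at $n=3$ the single-$u$ option ties rather than loses, since $2n-1=5=\lfloor 3/2\rfloor(3+2)$, so you should say the cycle option is optimal (weakly) for all $n\ge 3$ rather than that it strictly dominates; the maximum value, and hence the formula, is unaffected.
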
 

We now extend Theorem \ref{T-WIASI-GC-P1} to arbitrary powers of a complete sun graph.

\begin{theorem}\label{T-WIASI-GC-P1g}
For $n\ge 4$, the sparing number of an arbitrary power of the complete sun graph $G$ is $\varphi(G^r)=(n-1)(2n-1)$,where $r\ge 3$.
\end{theorem}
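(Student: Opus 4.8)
The plan is to show that for $n \ge 4$ the complete sun graph $G$ on $2n$ vertices has diameter exactly $3$, so that by Theorem \ref{T-Gdiam} the power $G^r$ collapses to the complete graph $K_{2n}$ for every $r \ge 3$. The value of $\varphi(G^r)$ then drops out of the already-known sparing number of a complete graph, with no further labeling argument required.

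First I would compute the pairwise distances in $G$. Since the induced subgraph $\langle U \rangle$ is complete, any two vertices $u_i, u_j \in U$ are at distance $1$. Each spoke vertex $w_j$ is adjacent only to $u_j$ and $u_{j-1}$ (indices mod $n$); hence $w_j$ reaches any $u_i$ in at most two steps $w_j \to u_j \to u_i$, and reaches any other spoke $w_k$ in at most three steps $w_j \to u_j \to u_k \to w_k$. Thus $\operatorname{diam}(G) \le 3$. To see that the diameter equals $3$, I would exhibit a pair of spokes with no common neighbour: for $n \ge 4$ the neighbourhoods $N(w_1) = \{u_1, u_n\}$ and $N(w_3) = \{u_2, u_3\}$ are disjoint, so $w_1$ and $w_3$ are non-adjacent (as $W$ is independent) with no common neighbour, and are therefore at distance exactly $3$.

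With $\operatorname{diam}(G) = 3$ established, Theorem \ref{T-Gdiam} gives that $G^3$ is the complete graph on its $2n$ vertices, i.e. $G^3 = K_{2n}$. Since taking higher powers of a graph can only add edges while $K_{2n}$ already contains all of them, $G^r = K_{2n}$ for every $r \ge 3$. Applying Theorem \ref{T-WIASI8} with $2n$ vertices then yields
\[
\varphi(G^r) = \varphi(K_{2n}) = \tfrac{1}{2}(2n-1)(2n-2) = (n-1)(2n-1),
\]
as claimed.

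The only genuinely delicate point is the diameter computation, and in particular the role of the hypothesis $n \ge 4$: this is precisely the condition guaranteeing the existence of two spoke vertices whose two-element neighbourhoods in $U$ are disjoint, forcing a distance-$3$ pair. For $n = 3$ every pair of spokes shares a neighbour, the diameter drops to $2$, and the collapse to $K_{2n}$ would already occur at $G^2$; the stated range $r \ge 3$ together with $n \ge 4$ is exactly what makes the uniform answer $(n-1)(2n-1)$ hold, and is consistent with the genuinely piecewise behaviour of $\varphi(G^2)$ recorded in Theorem \ref{T-WIASI-GC-P1}.
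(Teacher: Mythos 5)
Your proof is correct and follows essentially the same route as the paper: establish that the complete sun graph has diameter $3$, invoke Theorem \ref{T-Gdiam} to conclude $G^r=K_{2n}$ for $r\ge 3$, and apply Theorem \ref{T-WIASI8} to get $\varphi(G^r)=\frac{1}{2}(2n-1)(2n-2)=(n-1)(2n-1)$. Your distance computation is in fact more careful than the paper's, which mistakenly asserts that $w_i$ is \emph{adjacent} to $w_{i-1}$ and $w_{i+1}$ (impossible since $W$ is independent; the correct distance is $2$, via a common neighbour in $U$), whereas you correctly exhibit a distance-$3$ pair of spokes and explain the role of the hypothesis $n\ge 4$.
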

\begin{proof}
For $1 \le i \le n$, a vertex $w_i$ in $W$ is at a distance at most $2$ from all the vertices of $V$ and adjacent to the vertices $w_{i-1}$ and $w_{i+1}$ of $W$ and at a distance $3$ from all other vertices of $W$. That is, the diameter of $G$ is $3$. Therefore, by Theorem \ref{T-Gdiam}, $G^r$, for all $r\ge 3$,  is a complete graph on $2n$ vertices. Hence, by Theorem \ref{T-WIASI8}, the sparing number of $G^r$ is $\varphi(G^r)=\frac{1}{2}(2n-1)(2n-2)=(n-1)(2n-1)$.
\end{proof}

A {\em split graph} is defined in \cite{GCO} as a graph in which the vertices can be partitioned into a clique $K_r$ and an independent set $S$. A split graph is said to be a {\em complete split graph} if every vertex of the independent set $S$ is adjacent to every vertex of the the clique $K_r$ and is denoted by $K_S(r,s)$, where $r$ and $s$ are the orders of $K_r$ and $S$ respectively.

The sparing number of a split graph is given by

\begin{theorem}\label{T-WIAISI-GC-P2}
\cite{GS6} The split graph sparing number of $G$ is equal to the number of triangles in $G$ containing the vertex that is not mono-indexed in its clique. 
\end{theorem}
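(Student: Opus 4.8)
The plan is to mirror the argument behind Proposition \ref{P-WIASI8a} for complete graphs, exploiting the fact that a split graph is assembled from a clique $K_r$ and an independent set $S$. I would begin from the basic structural property of a WIASI recalled just after Theorem \ref{T-WIASI3}: in any WIASI the vertices carrying non-singleton set-labels form an independent set, and an edge is mono-indexed precisely when both of its end vertices are mono-indexed. Consequently, writing $I$ for the set of non-singleton vertices, the mono-indexed edges are exactly the edges of the induced subgraph $\langle V(G)\setminus I\rangle$; deleting them leaves only edges between $I$ and $V(G)\setminus I$, which is bipartite, so by Theorem \ref{T-WIASI9} the primary WIASI is the one choosing $I$ to minimise the number of such internal edges.

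Next I would use the clique to pin down the shape of the optimal $I$. Since any two vertices of $K_r$ are adjacent, $I$ can contain at most one clique vertex; write $v$ for this distinguished vertex (the unique vertex of $K_r$ that is not mono-indexed). Because $S$ is independent, every vertex of $S$ non-adjacent to $v$ may also be placed in $I$ without violating independence, and doing so only covers more edges. Thus, for a fixed clique representative $v$, the maximal admissible choice is forced to be $I=\{v\}\cup(S\setminus N_S(v))$, where $N_S(v)$ denotes the neighbours of $v$ in $S$; this assigns distinct non-singleton labels to $v$ and to every $S$-vertex missed by $v$, and singleton labels to the remaining $r-1$ clique vertices together with $N_S(v)$. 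I would verify directly that this is a genuine WIASI, consistent with the independence/covering count of Theorem \ref{T-WIASI14}.

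The heart of the proof is a bijection between the mono-indexed edges of this labeling and the triangles of $G$ containing $v$. The mono-indexed edges split into two families: edges inside $K_r\setminus\{v\}$, and edges joining a vertex of $N_S(v)$ to a clique vertex other than $v$. To an edge $ab$ with $a,b\in K_r\setminus\{v\}$ I associate the clique triangle $\{v,a,b\}$, and to an edge $aw$ with $a\in K_r\setminus\{v\}$ and $w\in N_S(v)$ I associate the triangle $\{v,a,w\}$; in each case $v$ is adjacent to both end vertices (by the clique property for $a,b$, and by $w\in N_S(v)$ for $w$), so the image is a triangle through $v$. Conversely, any triangle through $v$ has its two other vertices among the neighbours of $v$, hence in $(K_r\setminus\{v\})\cup N_S(v)=V(G)\setminus I$, so its edge opposite to $v$ is mono-indexed; and since $S$ is independent no triangle can use two $S$-vertices, so these two families exhaust all triangles at $v$. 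Hence the number of mono-indexed edges equals the number of triangles containing $v$.

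Finally, since the primary WIASI minimises the mono-indexed edges, $v$ is chosen to be the clique vertex lying in the fewest triangles, and $\varphi(G)$ equals the number of triangles of $G$ containing that vertex, as claimed. I expect the main obstacle to be the optimality argument of the second step: confirming that releasing exactly one clique vertex, rather than none, genuinely yields the primary WIASI. The bijection itself is clean, but one must rule out the competing configuration $I=S$ that keeps all clique vertices mono-indexed and produces $\tbinom{r}{2}$ mono-indexed edges, which amounts to showing that the minimum triangle count over clique vertices never exceeds $\tbinom{r}{2}$.
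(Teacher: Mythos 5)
The paper states this theorem without proof (it is quoted from \cite{GS6}), so there is no in-paper argument to compare against; judged on its own merits, your plan contains a genuine gap, and in fact you located it yourself but misjudged its severity. Your construction and bijection are sound: for the labeling in which the clique vertex $v$ together with the $S$-vertices outside $N_S(v)$ carry the non-singleton labels, the mono-indexed edges are exactly the edges inside $(K_r\setminus\{v\})\cup N_S(v)$, and these correspond bijectively to the triangles through $v$, giving the count $t(v)=\binom{r-1}{2}+e\bigl(K_r\setminus\{v\},\,N_S(v)\bigr)$. The fatal problem is the closing step: the inequality $\min_{v}t(v)\le\binom{r}{2}$, which you propose to prove in order to rule out the competing configuration $I=S$, is simply false. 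Take any complete split graph $K_S(r,s)$ with $r\ge 2$ and $s\ge 2$: there $N_S(v)=S$ for every clique vertex, so $t(v)=\binom{r-1}{2}+s(r-1)>\binom{r-1}{2}+(r-1)=\binom{r}{2}$, whereas labeling all of $S$ by non-singleton sets leaves only the $\binom{r}{2}$ clique edges mono-indexed. Concretely, in $K_S(3,2)$ every clique vertex lies in $5$ triangles, yet $\varphi=3$. This is corroborated by the paper itself: Theorem~\ref{T-WIAISI-GC-P3} and the remark following it give $\varphi(K_S(r,s))=\frac{1}{2}r(r-1)$, the sparing number of the maximal clique alone, achieved precisely by the configuration you hoped to exclude.

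What your analysis actually establishes is the unconditional formula $\varphi(G)=\min\bigl(\binom{r}{2},\,\min_{v\in K_r}t(v)\bigr)$, and the theorem as stated is correct only under the implicit hypothesis that the primary WIASI leaves some clique vertex non-mono-indexed, i.e.\ that $\min_{v}t(v)\le\binom{r}{2}$; in the complete split case with $s\ge 2$ the primary WIASI mono-indexes the entire clique and the phrase ``the vertex that is not mono-indexed in its clique'' refers to nothing. Under that hypothesis your bijection already completes the proof and the optimality worry dissolves into the assumption; without it, no proof can exist because the literal statement fails. So your argument is the right one for the defensible reading of the theorem, but the step you flagged as the main obstacle is not merely hard --- it is false, and a correct write-up must either add the hypothesis explicitly or replace the conclusion by the minimum formula above.
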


The following theorem estimates the sparing number of a complete split graph $G$.

\begin{theorem}\label{T-WIAISI-GC-P3}
\cite{GS6} The sparing number of a complete split graph $K_S(r,s)$ is $\frac{1}{2}r(r-1)$. 
\end{theorem}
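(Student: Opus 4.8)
The plan is to reduce the computation of $\varphi(K_S(r,s))$ to a statement about independent sets, using the foundational structural fact recorded earlier in the excerpt: under any WIASI the vertices receiving non-singleton set-labels must form an independent set (no edge may join two non-mono-indexed vertices), and an edge is mono-indexed precisely when both of its end vertices are mono-indexed. Consequently, if $I$ denotes the set of non-singleton vertices, then the mono-indexed edges are exactly the edges of the induced subgraph $\langle V\setminus I\rangle$, so minimising the number of mono-indexed edges amounts to choosing an independent set $I$ leaving as few edges as possible inside its complement. First I would fix notation for $K_S(r,s)$: clique vertices $\{u_1,\dots,u_r\}$ spanning $K_r$, an independent set $S=\{w_1,\dots,w_s\}$, every $w_j$ adjacent to every $u_i$, so that $|E|=\binom{r}{2}+rs$.

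For the upper bound I would exhibit the natural labelling, taking $I=S$: assign distinct non-singleton sets to the vertices of $S$ and distinct singletons to the clique vertices. Every edge of $K_S(r,s)$ is either a clique edge $u_iu_j$ or a cross edge $u_iw_j$, so every edge has a mono-indexed (clique) end; since adding a singleton to any set $A$ yields a sum set of cardinality $|A|$, the weak condition $|f^+(uv)|=\max(|f(u)|,|f(v)|)$ holds on every edge, and this is a legitimate WIASI (its existence being guaranteed in general by Theorem \ref{T-WIASI6}). The only edges with both ends mono-indexed are the $\binom{r}{2}$ clique edges, so $\varphi(K_S(r,s))\le\binom{r}{2}=\tfrac12 r(r-1)$.

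The substantive part is the matching lower bound, showing no independent set $I$ can do better. Because $K_r$ is complete, $I$ contains at most one clique vertex, and I would split into two cases. If $I$ contains no clique vertex, all $r$ clique vertices survive in $V\setminus I$, so all $\binom{r}{2}$ clique edges are mono-indexed. If $I$ contains a clique vertex $v$, then $I$ can contain no vertex of $S$ (each $w_j$ is adjacent to $v$), forcing $I=\{v\}$, and the edges inside $V\setminus I$ number $\binom{r-1}{2}+(r-1)s$; the identity $\binom{r-1}{2}+(r-1)s-\binom{r}{2}=(r-1)(s-1)\ge 0$ then shows this quantity is at least $\binom{r}{2}$. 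In either case at least $\binom{r}{2}$ edges are mono-indexed, so the upper bound is tight and $\varphi(K_S(r,s))=\tfrac12 r(r-1)$.

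I expect the main obstacle to be the optimality direction — specifically ruling out the temptation to promote a clique vertex into $I$ — which the elementary inequality $(r-1)(s-1)\ge 0$ settles cleanly. As an alternative packaging I could instead invoke the independence-number formulation of the sparing number from Section 2.3: with $I=S$ as the maximum independent set, $\alpha(K_S(r,s))=s$, and every vertex of $S$ of degree $r$, one gets $\varphi=|E|-\sum_{v\in I}d(v)=\bigl(\binom{r}{2}+rs\bigr)-rs=\binom{r}{2}$, reproducing the same value; the case analysis above is precisely what certifies that $S$ is the optimal choice of $I$.
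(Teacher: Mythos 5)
Your proof is correct; note, however, that the paper itself offers no proof of this theorem to compare against --- it is quoted from \cite{GS6} (listed as submitted) and stated without argument. Judged on its merits, your independent-set reduction is sound: by Lemma \ref{L-IASI3} an edge has set-indexing number $1$ exactly when both ends do, so the non-singleton vertices form an independent set $I$ and the mono-indexed edges are precisely those induced on $V\setminus I$; your case analysis (either $I$ avoids the clique, leaving all $\binom{r}{2}$ clique edges mono-indexed, or $I=\{v\}$ for a clique vertex, leaving $\binom{r-1}{2}+(r-1)s=\binom{r}{2}+(r-1)(s-1)\ge\binom{r}{2}$ such edges, with $s\ge 1$) correctly exhausts the maximal independent sets of $K_S(r,s)$ and establishes optimality, which is the step the paper's framework leaves implicit. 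Your route also meshes with the surrounding machinery: it is equivalent to invoking Theorem \ref{T-WIASI14}, since the clique $K_r$ is a minimum vertex cover of $K_S(r,s)$ and the mono-indexed edges are then exactly the edges inside the cover; and it passes the natural sanity checks --- at $s=1$ your two cases tie, consistent with $K_S(r,1)\cong K_{r+1}$ and Theorem \ref{T-WIASI8}, and your value $\frac{1}{2}r(r-1)$ is indeed the sparing number of $K_{r+1}$, the maximal clique of $K_S(r,s)$ for $s\ge 1$, exactly as the remark following the theorem asserts. The only loose thread is the injectivity requirements on $f$ and $f^{+}$ in your upper-bound labelling, which you dispatch by appeal to Theorem \ref{T-WIASI6}; that is the same level of rigor the paper itself adopts throughout, so it is not a gap relative to the source.
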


It can be noted that the sparing number of a complete split graph is equal to the sparing number of the maximal clique in it.

\begin{theorem}\label{T-WIAISI-GC-P4}
\cite{GS9} Let $G=K_S(r,s)$ be a complete split graph without isolated vertices. Then, the sparing number of $G^2$ is $\frac{1}{2}[(r+s-1)(r+s-2)]$, where $r=|V(K_r)|$ and $s=|S|$.
\end{theorem}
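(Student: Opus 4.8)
The plan is to reduce the problem to the sparing number of a complete graph. The key structural observation is that squaring a complete split graph merges its independent set into the clique, so that $G^2$ is complete on all $r+s$ vertices; the stated formula then drops out of Theorem~\ref{T-WIASI8}.

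First I would fix notation for the structure of $G=K_S(r,s)$: write $V(K_r)=\{c_1,\dots,c_r\}$ for the clique and $S=\{x_1,\dots,x_s\}$ for the independent set, recalling that every $x_j$ is adjacent to every $c_i$ while the $x_j$ are pairwise non-adjacent. I would then examine pairwise distances in $G$. Any two clique vertices are adjacent, and every clique vertex is adjacent to every independent vertex, so all such pairs are at distance $1$. Consequently the only non-adjacent pairs in $G$ are the pairs $x_j,x_k$ of distinct independent vertices.

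Next I would invoke the hypothesis that $G$ has no isolated vertices: since an independent vertex is adjacent only to clique vertices, the absence of isolated vertices forces $r\ge 1$. Fixing any clique vertex $c_i$, the path $x_j\,c_i\,x_k$ then shows $d_G(x_j,x_k)=2$. Hence every pair of distinct vertices of $G$ is at distance at most $2$, so the diameter of $G$ is at most $2$. Invoking Theorem~\ref{T-Gdiam} (or, more robustly, simply noting that every pair of $G$-non-adjacent vertices lies within distance $2$ and is therefore joined in the square) yields $G^2=K_{r+s}$.

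Finally I would apply Theorem~\ref{T-WIASI8} with $n=r+s$ to conclude $\varphi(G^2)=\varphi(K_{r+s})=\tfrac12(r+s-1)(r+s-2)$. I do not anticipate a genuine obstacle; the only point needing care is that it is precisely the no-isolated-vertices condition ($r\ge 1$) that guarantees a common neighbour for any two independent vertices and hence collapses $S$ into the clique upon squaring. The boundary cases $s\le 1$, where $G$ is already complete and $G^2=G$, still satisfy the formula, so they require no separate treatment.
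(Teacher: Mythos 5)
Your proposal is correct and matches the paper's method exactly: although the paper defers this particular proof to the cited reference \cite{GS9}, the argument it uses for the analogous results (Theorems \ref{T-WIASI-GC-P1g} and \ref{T-WIAISI-GC-P7}) is precisely your reduction --- observe the diameter is at most $2$, conclude $G^2=K_{r+s}$ via Theorem \ref{T-Gdiam}, and apply Theorem \ref{T-WIASI8}. Your added care about the no-isolated-vertices hypothesis forcing $r\ge 1$ (hence a common neighbour for any two vertices of $S$) is a sound and welcome refinement.
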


The diameter of a split graph is $3$ and hence the cube and higher powers of $G$ are complete graphs of order $r+s$. Hence, we have

\begin{theorem}\label{T-WIAISI-GC-P5}
The sparing number of an arbitrary power of a split graph $G$ is $\varphi(G^r)=\frac{1}{2}[(r+s-1)(r+s-2)]$.
\end{theorem}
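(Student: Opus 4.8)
The plan is to mimic the proof of the analogous statement for complete sun graphs (Theorem~\ref{T-WIASI-GC-P1g}): establish that every split graph has diameter at most $3$, conclude via Theorem~\ref{T-Gdiam} that any power $G^{r}$ with exponent at least $3$ is the complete graph on its $r+s$ vertices, and then read off the sparing number directly from Theorem~\ref{T-WIASI8}.

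First I would establish the diameter bound. Let $G$ be a split graph with clique $K_r$ on vertex set $Q$ and independent set $S$, and assume $G$ has no isolated vertices. Any two vertices of $Q$ are adjacent, hence at distance $1$. Each vertex of $S$, not being isolated, has a neighbour in $Q$ (every neighbour lies in $Q$, since $S$ is independent), so a vertex of $S$ is at distance at most $2$ from each vertex of $Q$. Finally, for two vertices $u,v\in S$, choose clique neighbours $a,b\in Q$ of $u$ and $v$ respectively; since $a$ and $b$ are equal or adjacent, the path through $a$ and $b$ shows that $d(u,v)\le 3$. Thus the diameter of $G$ is at most $3$.

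Next I would invoke Theorem~\ref{T-Gdiam}. Since the diameter $d$ of $G$ satisfies $d\le 3$, the power $G^{d}$ is complete, and every higher power contains $G^{d}$; hence $G^{r}$ is the complete graph $K_{r+s}$ on $r+s$ vertices for every exponent $r\ge 3$. Applying Theorem~\ref{T-WIASI8} with $n=r+s$ then gives $\varphi(G^{r})=\frac{1}{2}(r+s-1)(r+s-2)$, as claimed. The exponent-$2$ case for a complete split graph is already covered by Theorem~\ref{T-WIAISI-GC-P4}, which yields the same value, so the formula is consistent across all the relevant powers.

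The only real obstacle here is notational rather than mathematical: the symbol $r$ is overloaded, denoting both the exponent of the graph power and the order of the clique $K_r$. I would make explicit that the $r$ in $G^{r}$ is the exponent while the $r$ appearing in $r+s$ is the clique order. I would also spell out the two tacit hypotheses that make the statement correct, namely that the exponent is at least $3$ (so that the diameter argument forces the power to be genuinely complete) and that $G$ has no isolated vertices (so that the diameter bound of $3$ actually holds).
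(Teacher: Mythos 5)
Your proposal is correct and takes essentially the same route as the paper, which likewise asserts the diameter bound for split graphs, invokes Theorem~\ref{T-Gdiam} to conclude that the cube and higher powers are complete graphs of order $r+s$, and reads off the value from Theorem~\ref{T-WIASI8}. You merely supply the details the paper leaves implicit: the actual proof that the diameter is at most $3$, the restriction to exponents at least $3$ (with the square case deferred to Theorem~\ref{T-WIAISI-GC-P4}), the no-isolated-vertices hypothesis, and the clash between $r$ as power exponent and $r$ as clique order.
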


A {\em windmill graph}, denoted by $W(n,k)$, is defined in \cite{JAG}, as an undirected graph constructed for $n\ge 2$ and $r\ge 2$ by joining $k$ copies of the complete graph $K_n$ at a shared vertex.

\begin{theorem}\label{T-WIAISI-GC-P6}
\cite{GS6} The sparing number of a windmill graph $W(n,k)$ is $\frac{k}{2}(n-1)(n-2)$.
\end{theorem}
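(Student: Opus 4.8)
The plan is to exploit the natural decomposition of the windmill graph into edge-disjoint complete graphs. By definition, $W(n,k)$ consists of $k$ copies of $K_n$, say $K_n^{(1)}, \ldots, K_n^{(k)}$, all sharing a single hub vertex $v_0$. Since distinct copies meet only at $v_0$ and share no edge, we may write $W(n,k) = \bigcup_{i=1}^k K_n^{(i)}$ with $E(K_n^{(i)}) \cap E(K_n^{(j)}) = \emptyset$ whenever $i \neq j$; that is, the copies are pairwise edge-disjoint even though they share the hub.

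First I would record this decomposition and observe that for each $i$ the intersection $\left(\bigcup_{j<i} K_n^{(j)}\right) \cap K_n^{(i)}$ is exactly the hub $v_0$, which carries no edges and therefore has sparing number $0$. Next I would apply Corollary \ref{C-WIASI-WO-U3}, extended from two graphs to $k$ graphs by a routine induction whose inductive step is furnished by Theorem \ref{T-WIASI-WO-U2} together with the edgelessness of the intersection. This gives $\varphi(W(n,k)) = \sum_{i=1}^k \varphi(K_n^{(i)})$. Invoking Theorem \ref{T-WIASI8}, each copy contributes $\varphi(K_n) = \frac{1}{2}(n-1)(n-2)$, and summing over the $k$ copies yields $\varphi(W(n,k)) = \frac{k}{2}(n-1)(n-2)$, as claimed.

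The step I expect to be the main obstacle is justifying that the optimal WIASIs on the individual copies can be chosen compatibly on the shared hub, so that Theorem \ref{T-WIASI-WO-U2} delivers the sum of the individual sparing numbers rather than only an upper bound. I would resolve this by designating $v_0$ as the unique non-mono-indexed vertex in every copy: label $v_0$ by a fixed non-singleton set and all $k(n-1)$ non-hub vertices by distinct singleton sets. Under this single WIASI the only mono-indexed edges are those joining two non-hub vertices within a common copy, so each copy carries exactly $\binom{n-1}{2} = \frac{(n-1)(n-2)}{2}$ mono-indexed edges; the labelings agree on $v_0$ and the per-copy minima are attained simultaneously, confirming that the minimum over all WIASIs of $W(n,k)$ is precisely $\frac{k}{2}(n-1)(n-2)$.
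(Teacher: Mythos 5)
Your proposal is correct, and it assembles exactly the machinery this paper provides: the paper itself states Theorem~\ref{T-WIAISI-GC-P6} only by citation to \cite{GS6} with no proof given, but your route --- decompose $W(n,k)$ into $k$ edge-disjoint copies of $K_n$ meeting in the edgeless hub, extend Corollary~\ref{C-WIASI-WO-U3} to $k$ summands via Theorem~\ref{T-WIASI-WO-U2}, and invoke Theorem~\ref{T-WIASI8} per copy --- is precisely the pattern the paper uses for Eulerian graphs in Theorem~\ref{T-WIASI-WO-U4}, so it is the natural in-framework argument. Two remarks. First, you were right to flag the compatibility issue as the main obstacle: the stated equality in Theorem~\ref{T-WIASI-WO-U2} silently assumes the optimal labelings can be chosen to agree on common vertices, and your explicit labeling (hub non-singleton, all $k(n-1)$ other vertices distinct singletons) settles the upper bound unconditionally, while the lower bound follows because, by the hereditary property (Proposition~\ref{P-WIASI2}), any WIASI of $W(n,k)$ restricts to a WIASI of each copy, each copy then carries at least $\frac{1}{2}(n-1)(n-2)$ mono-indexed edges by Theorem~\ref{T-WIASI8}, and edge-disjointness lets these counts add --- you could state this restriction step explicitly rather than leaving it inside the appeal to the corollary. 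Second, a small observation strengthening your compatibility paragraph: the alternative choice of leaving the hub mono-indexed and giving each copy its own non-hub non-singleton vertex yields the same count of $\binom{n-1}{2}$ mono-indexed edges per copy, so the minimum is robust to which vertex of each clique is spared; this is consistent with Proposition~\ref{P-WIASI8a}, which identifies $\varphi(K_n)$ with the number of triangles through the non-mono-indexed vertex. The only cosmetic quibble is your claim that the hub ``has sparing number $0$'': under the paper's convention that graphs have no isolated vertices, $\varphi(K_1)$ is not formally defined, but since a single vertex carries no edges the intersection contributes nothing either way, so this does not affect the argument.
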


 The following theorem establishes the sparing number of an arbitrary power of a windmill graph.

\begin{theorem}\label{T-WIAISI-GC-P7}
The sparing number of an arbitrary power of a windmill graph $G=W(n,k)$ is $\varphi(G^{r}) =\frac{1}{2}(k(n-1)(k(n-1)-1)$, for $r\ge 2$.
\end{theorem}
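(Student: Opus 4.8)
The plan is to follow exactly the strategy already used for the complete sun graph (Theorem \ref{T-WIASI-GC-P1g}) and the split graph (Theorem \ref{T-WIAISI-GC-P5}): determine the diameter of $W(n,k)$, invoke Theorem \ref{T-Gdiam} to conclude that sufficiently high powers are complete graphs, and then read off the answer from the sparing number formula for complete graphs (Theorem \ref{T-WIASI8}). The entire content reduces to a diameter computation and a vertex count.

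First I would pin down the diameter of $W(n,k)$. The shared central vertex belongs to every copy of $K_n$ and is therefore adjacent to all of the non-central vertices, so it lies at distance $1$ from every other vertex. Any two non-central vertices lying in the same copy of $K_n$ are adjacent, hence at distance $1$, while any two non-central vertices in distinct copies are joined by the length-$2$ path through the central vertex and by no shorter path. Consequently $\mathrm{diam}(W(n,k))=2$ for all $k\ge 2$. By Theorem \ref{T-Gdiam}, $G^{r}$ is then a complete graph for every $r\ge 2$.

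Next I would count the vertices of $G$. The $k$ copies of $K_n$ together contribute $kn$ vertices, but all $k$ copies share the single central vertex, so the total order is $N=kn-(k-1)=k(n-1)+1$. Applying Theorem \ref{T-WIASI8} to the complete graph $K_N$ gives $\varphi(G^{r})=\tfrac12(N-1)(N-2)$, and substituting $N-1=k(n-1)$ and $N-2=k(n-1)-1$ yields the asserted value $\tfrac12\,k(n-1)\bigl(k(n-1)-1\bigr)$.

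There is no genuinely hard step; the only points demanding care are the vertex count, where one must identify the single shared vertex across all $k$ blades rather than over-counting it once per copy, and the verification that the diameter equals exactly $2$ (not larger), so that the hypothesis $r\ge 2$ already suffices to collapse $G^{r}$ to a complete graph and make Theorem \ref{T-Gdiam} applicable.
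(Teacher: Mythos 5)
Your proposal is correct and follows essentially the same route as the paper: establish that $\mathrm{diam}(W(n,k))=2$, invoke Theorem \ref{T-Gdiam} to conclude $G^{r}$ is complete for $r\ge 2$, and apply Theorem \ref{T-WIASI8} to $K_{k(n-1)+1}$. In fact your vertex count $N=k(n-1)+1$ is more careful than the paper's, which misstates the order of the complete graph as $n(k-1)+1$ (an evident transposition slip, since its final formula uses $k(n-1)$ correctly).
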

\begin{proof}
Since the windmill graph $G$ is the graph obtained by joining $k$ copies of $K_n$ with one common vertex, every pair of vertices in $G$ are at a distance $2$. That is, the diameter of a windmill graph is $2$. Therefore, $G^2$ is a complete graph on $n(k-1)+1$. Therefore, by Theorem \ref{T-WIASI8}, $\varphi(G^{r}) =\frac{1}{2}k(n-1)(k(n-1)-1)$, for $r\ge 2$.
\end{proof}

Now, we proceed to the theorem on the sparing number of squares of wheel graphs. 

\begin{proposition}\label{T-WIAISI-GC-P8}
The sparing number of the square of a wheel graph on $n+1$ vertices is $\frac{1}{2}n(n-1)$.
\end{proposition}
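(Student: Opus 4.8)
The plan is to identify the structure of $W_{n+1}^2$ by first computing the diameter of the wheel $W_{n+1}$, and then to invoke the earlier results on graph powers and complete graphs, exactly as was done for windmill graphs in Theorem \ref{T-WIAISI-GC-P7}. Recall that the wheel graph $W_{n+1}=C_n+K_1$ consists of a hub vertex $v$ that is adjacent to every vertex of the cycle $C_n$ on vertices $u_1,u_2,\ldots,u_n$, together with the cycle edges $u_iu_{i+1}$ (indices modulo $n$).

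First I would verify that $\mathrm{diam}(W_{n+1})\le 2$. Indeed, the hub $v$ is at distance $1$ from every cycle vertex, and any two cycle vertices $u_i,u_j$ are either adjacent on the cycle (distance $1$) or joined through the hub by the path $u_i\,v\,u_j$ (distance $2$). Hence no two vertices of $W_{n+1}$ are separated by more than $2$, so the diameter is $2$ for $n\ge 4$ (and $1$ in the degenerate case $W_4=K_4$). Next, by Theorem \ref{T-Gdiam}, since the diameter $d$ satisfies $d\le 2$, the square $W_{n+1}^2$ is a complete graph; as $W_{n+1}$ has $n+1$ vertices, this gives $W_{n+1}^2=K_{n+1}$. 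Finally, applying Theorem \ref{T-WIASI8} to $K_{n+1}$ (with $m=n+1$), the sparing number equals $\tfrac{1}{2}\bigl((n+1)-1\bigr)\bigl((n+1)-2\bigr)=\tfrac{1}{2}n(n-1)$, as claimed.

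The argument is essentially routine once the diameter has been pinned down, and it parallels the reductions used for the square of a complete bipartite graph (Theorem \ref{T-WIAISI-GT-P3}) and for powers of split graphs (Theorem \ref{T-WIAISI-GC-P5}). The only point demanding a little care — the main, though minor, obstacle — is confirming that every pair of non-adjacent cycle vertices lies at distance exactly $2$; this rests entirely on the hub being universally adjacent, so that the two-step detour $u_i\,v\,u_j$ is always available. Once this universal adjacency is noted, the collapse of $W_{n+1}^2$ to $K_{n+1}$ is immediate and the sparing number follows directly from the complete-graph formula.
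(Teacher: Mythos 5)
Your proof is correct and follows essentially the same route as the paper, which likewise observes that the wheel graph has diameter $2$, so that by Theorem \ref{T-Gdiam} the square $W_{n+1}^2$ collapses to $K_{n+1}$, whence Theorem \ref{T-WIASI8} gives $\varphi(W_{n+1}^2)=\frac{1}{2}n(n-1)$. Your write-up merely fills in the distance verification (including the degenerate case $W_4=K_4$) that the paper leaves implicit.
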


The proof of the above theorem follows from the fact that the diameter of wheel graphs is $2$. Then Theorem holds for any power $W_{n+1}^r$ for $r\ge 2$.

The {\em bipartite wheel graph} or a {\em gear graph} is defined in \cite{BLS} as a graph with a vertex added between each pair of adjacent graph vertices of the outer cycle of a wheel graph. The number of vertices in a gear graph $G=BW_n$ is $2n+1$.  The gear graph is bipartite and hence its sparing number is $0$. 

\begin{theorem}\label{T-WIAISI-GC-P9}
The sparing number of the $r$-th  power of a gear graph $BW_n$ is 
\begin{equation*}
\varphi(BW_n^r)=
\begin{cases}
\frac{1}{2}[(n+1)^2+5] & ~~\text{if}~~ r=2, n ~\text{is even}\\
\frac{1}{2}[(n+1)^2] & ~~\text{if}~~ r=2, n ~\text{is odd}\\
 n(n+1) & ~~\text{if}~~ r=3, n ~\text{is even}\\
 \frac{1}{2}(2n^2+3n+3) & ~~\text{if}~~ r=3, n ~\text{is odd}\\
n(2n-1) & ~~\text{if}~~ r=4.
\end{cases}
\end{equation*}
\end{theorem}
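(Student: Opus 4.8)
The plan is to handle the three ranges $r=2$, $r=3$ and $r=4$ separately, in each case first reading off the adjacency structure of $BW_n^r$ from the distances in the gear graph and then locating the independent set of vertices that should carry the non-singleton labels. Write $h$ for the hub, $u_1,\dots,u_n$ for the rim vertices of the underlying wheel and $v_1,\dots,v_n$ for the subdivision vertices, where $v_i$ sits between $u_i$ and $u_{i+1}$ (indices mod $n$); then $E(BW_n)$ consists of the $n$ spokes $hu_i$ together with the $2n$ edges of the subdivided outer cycle $u_1v_1u_2v_2\cdots u_nv_nu_1$. First I would record the pairwise distances: $d(h,u_i)=1$, $d(h,v_i)=2$, $d(u_i,u_j)=2$ for $i\neq j$, $d(u_i,v_j)\in\{1,3\}$ (the value $1$ occurring exactly when $u_i$ is an endpoint of the subdivided edge containing $v_j$), and $d(v_i,v_j)=2$ when $v_i,v_j$ are consecutive and $4$ otherwise. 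In particular the diameter of $BW_n$ equals $4$.

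The computational engine is the characterisation behind Theorems \ref{T-WIASI9} and \ref{T-WIASI14a}: in any WIASI the non-singleton vertices form an independent set $I$, the mono-indexed edges are precisely those induced on $V\setminus I$, and hence $\varphi(BW_n^r)=|E(BW_n^r)|-\max_I\sum_{v\in I}d(v)$, the maximum taken over independent sets $I$ of $BW_n^r$; it suffices to maximise over maximal independent sets, since enlarging $I$ while keeping it independent only increases the degree sum. So for each $r$ I would (i) list and count the edges of $BW_n^r$, and (ii) exhibit the independent set maximising the incident-degree sum and check that the resulting $\varphi$ matches the stated formula.

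For $r=2$ the new edges make the rim a clique $K_n$, join $h$ to every vertex, and join consecutive intermediate vertices into a cycle $C_n$, while no new rim--intermediate edges arise; this gives $|E(BW_n^2)|=\tfrac12(n^2+9n)$, with $\deg u_i=n+2$, $\deg v_i=5$, $\deg h=2n$. Since $\{h,u_1,\dots,u_n\}$ is a clique, an independent set contains at most one of these vertices, and the optimal choice takes a single rim vertex $u_k$ together with a maximum independent set of the intermediate cycle avoiding its two neighbours $v_{k-1},v_k$, of size $\lceil\tfrac{n-2}{2}\rceil$; substituting reproduces $\tfrac12[(n+1)^2]$ for odd $n$ and $\tfrac12[(n+1)^2+5]$ for even $n$. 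For $r=3$ every rim--intermediate distance is at most $3$, so the rim and the intermediates form a complete bipartite graph $K_{n,n}$ (on top of the rim clique, the hub joined to all, and the intermediate cycle); here $|E(BW_n^3)|=\tfrac12(3n^2+5n)$ and $\deg v_i=n+3$, and because every rim vertex and the hub are adjacent to every intermediate, any independent set is either a single clique vertex or lies entirely among the intermediates, so the optimum is a maximum independent set of the intermediate cycle of size $\lfloor n/2\rfloor$, yielding $n(n+1)$ for even $n$ and $\tfrac12(2n^2+3n+3)$ for odd $n$. Finally, for $r=4$ the diameter being $4$ forces $BW_n^4$ to be $K_{2n+1}$ by Theorem \ref{T-Gdiam}, whence Theorem \ref{T-WIASI8} gives $\varphi=\tfrac12(2n)(2n-1)=n(2n-1)$.

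The routine part is the edge counting and the arithmetic of substituting the independent-set sizes. The genuine obstacle is the optimality claim for $r=2$ and $r=3$: one must prove that the displayed independent set truly maximises $\sum_{v\in I}d(v)$, i.e. that no cleverer mixture of rim, hub and intermediate vertices covers more edges. The clean route is to exploit that $\{h,u_1,\dots,u_n\}$ induces a clique in both powers, which caps the high-degree contribution at a single term, and then to reduce the residual optimisation to a maximum independent set in the intermediate cycle (a path, once a chosen rim vertex's two neighbours are deleted). A careful parity-sensitive comparison of the two competing configurations, ``one rim vertex plus intermediates'' against ``intermediates only,'' is what pins down the exact constants, and this comparison, together with checking the small values of $n$ at which the above distance pattern degenerates, is where the care is required.
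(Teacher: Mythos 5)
Your proposal is correct and follows essentially the same route as the paper's proof: you compute the same degree sequences and edge counts for $BW_n^2$ and $BW_n^3$ (degrees $n+2$, $5$, $2n$ and $|E|=\tfrac12 n(n+9)$ for $r=2$; degrees $2n$, $n+3$ and $|E|=\tfrac12 n(3n+5)$ for $r=3$), place the non-singleton labels on one rim vertex plus an independent set of intermediates for $r=2$ and on intermediates alone for $r=3$, subtract the covered degree sum with the same parity split, and invoke Theorems \ref{T-Gdiam} and \ref{T-WIASI8} for $r\ge 4$. If anything, your explicit formulation $\varphi=|E|-\max_I\sum_{v\in I}d(v)$ over independent sets $I$, together with your insistence on verifying optimality of the chosen configuration against the competing ones, is more careful than the paper's own argument, which only compares two candidate labelings and asserts the winner.
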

\begin{proof}
Let $G=BW_n$, where $u$ be the central vertex of $G$, $A=\{v_1,v_2,\ldots v_n\}$ be the set of vertices of $G$ which are adjacent to $u$ and $B=\{w_1,w_2,w_3, \ldots, w_n\}$ be the vertices of $G$ which are not adjacent to $u$.

\noindent {\em Case 1:} If $r=2$.

In $G^2$, all vertices in both $A$ and $B$ are adjacent to $u$ and all edges in $A$ are adjacent to each other. A vertex $v_i$ is adjacent to all other $n-1$ vertices in $A$, to $u$ and to two vertices $w_{i-1}$ and $w_{i+1}$ of $B$. Therefore, in $G^2$, $d(v_i)=n+2$ for all $v_i$ in $A$.  The vertex $w_i$ is adjacent to the vertices $w_{i+1}$ and $w_{i-1}$ of $B$ and to the vertices $v_i$ and $v_{i+1}$ of $A$. Therefore, $d(w_i)=5$ in $G^2$. Therefore, the number of edges in $G^2$ is $|E|=\frac{1}{2}\sum_{v\in V(G)}d(v)=\frac{1}{2}[n(n+2)+5n+2n]=\frac{1}{2}n(n+9)$.

If we label $u$ by a non-singleton set, then no other vertex in $G$ can be labeled by a non-singleton set. Therefore, the number of edges that are not mono-indexed, in this case is $2n$. If we label $u$ by a singleton set, then exactly one vertex of $A$, say and $\lfloor \frac{n-1}{2} \rfloor$ vertices, of $B$, except $w_1$ and $w_n$, can be labeled by distinct non-singleton sets. Therefore, the number mono-indexed edges, in this case, is $n+2+5\lfloor \frac{n-1}{2} \rfloor$, which is greater than $2n$ for all $n>1$. Hence, we adopt the second option. Then, the minimum number of mono-indexed edges in $G$ is $\frac{1}{2}n(n+9)-(n+2+5\lfloor \frac{n-1}{2} \rfloor)$. Here we have two cases.

\noindent {\em Subase 1.1:} If n is even, then the number of mono-indexed edges is $\frac{1}{2}n(n+9)-(n+2+5\frac{n-2}{2})=\frac{1}{2}[(n+1)^2+5]$.

\noindent {\em Subcase 1.2:} If n is odd, then the number of mono-indexed edges is $\frac{1}{2}n(n+9)-(n+2+5\frac{(n-1)}{2})=\frac{1}{2}[(n+1)^2]$.

\noindent {\em Case 2:} If $r=3$.

In $G^3$, every vertex of the set $A$ is adjacent to every other vertices in $V(G)$. Hence, $d(v)=2n$ for all $v\in A$. also, $d(u)=2n$. Now, every vertex $w_i$ is adjacent to $u$, every vertex of the set $A$ and to the vertices $w_{i+1}$ and $w_{i-1}$ of $B$. Therefore, $d(v)=n+3$ for all $v\in B$. Then, $|E|=\frac{1}{2}[2n+2n^2+n(n+3)]=\frac{1}{2}n(3n+5)$.

Since every vertex of $A\cup \{u\}$ is adjacent to all other vertices in $G$, only one vertex in this set is eligible for a non-singleton set-label. If we are to label the possible vertices of $B$ by non-singleton set-labels, then $\lfloor \frac{n}{2} \rfloor$ vertices are eligible for such labeling. Here we adopt the second option. Therefore, the number of edges in $G^3$, that are not mono-indexed, is $\lfloor \frac{n}{2} \rfloor(n+3)$. Then, the number of mono-indexed edges in $G^3$ is $\frac{1}{2}n(3n+5)-\lfloor \frac{n}{2} \rfloor(n+3)$. Here we have two cases.

\noindent {\em Subase 2.1:} If n is even, then the number of mono-indexed edges is $\frac{1}{2}n(3n+5)-\frac{1}{2}n(n+3)=n(n+1)$.

\noindent {\em Subcase 2.2:} If n is odd, then the number of mono-indexed edges is $\frac{1}{2}n(3n+5)-\frac{1}{2}(n-1)(n+3)=\frac{1}{2}(2n^2+3n+3)$.

\noindent {\em Case 3:} If $r\ge 4$.

The distance between any pair of vertices in a gear graph $G$ is a at most $4$. Therefore, for $r\ge 4$, $G^r$ is a complete graph. Hence, by Theorem \ref{T-WIASI8}, the sparing number of $G^r$ is $n(2n-1)$.
\end{proof}

\section{Weakly Uniform IASI Graphs}

A WIASI $f$ is said to be {\em $k$-uniform} if $f^+(e)=k$ for all edges $e$ of $G$. In this context, we say that $G$ admits a {\em $k$-uniform WIASI} ($k$-UWIASI, in short) or a {\em weakly $k$-uniform IASI}. If graph $G$ admits a weakly $k$-uniform IASI , then one end vertex of every edge of $G$ must be mono-indexed and the other end vertex must be set-labeled by distinct non-singleton sets of cardinality $k$ , for a unique positive integer $k>1$. If $G$ has $k$-uniform IASI for all values of $k$, then $G$ is said to have an {\em arbitrarily $k$-uniform IASI} or simply an {\em arbitrary Uniform IASI}. 

The following theorem characterises $k$-uniform WIASI graphs.

\begin{theorem}\label{T-WIASI-WU1}
\cite{GS1} For any positive integer $k>1$, a graph $G$ admits a weakly $k$-uniform IASI if and only if $G$ is bipartite.
\end{theorem}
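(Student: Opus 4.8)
The plan is to treat the two implications separately, with the forward direction resting on a sum-set cardinality bound and the converse on an explicit numerical labeling.

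For the forward direction, suppose $G$ admits a weakly $k$-uniform IASI $f$ with $k>1$. The governing fact is that for finite nonempty sets $A,B\subset \mathbb{N}_0$ one has $|A+B|\ge |A|+|B|-1$, which follows by ordering both sets increasingly and exhibiting $|A|+|B|-1$ strictly increasing sums. Combined with the defining identity $|f^+(uv)|=\max(|f(u)|,|f(v)|)$ of a WIASI, this forces, on every edge $uv$, the smaller of $|f(u)|$ and $|f(v)|$ to equal $1$; that is, at least one end vertex of each edge is mono-indexed, which is the observation already recorded in the text preceding Proposition \ref{P-WIASI2}. Since here $\max(|f(u)|,|f(v)|)=k>1$, the other end vertex cannot be mono-indexed, so every edge has exactly one mono-indexed end and one non-singleton end. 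Writing $M$ for the set of mono-indexed vertices and $N$ for the set of non-singleton vertices, we have $M\cap N=\emptyset$, $M\cup N=V(G)$, and every edge joins a vertex of $M$ to a vertex of $N$; thus $(M,N)$ is a bipartition and $G$ is bipartite.

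For the converse, assume $G$ is bipartite with bipartition $(X,Y)$. I would label each vertex of $X$ by a distinct singleton $\{a_i\}$ and each vertex of $Y$ by a distinct $k$-element set $B_j$, the simplest choice being a block of $k$ consecutive integers, $B_j=\{d_j,d_j+1,\dots,d_j+k-1\}$. For any edge $uv$ with $u\in X$ and $v\in Y$ one then gets $f^+(uv)=a_i+B_j$, again a block of $k$ consecutive integers, so $|f^+(uv)|=k=\max(1,k)$ automatically; this makes $f$ a WIASI that is $k$-uniform. What remains is injectivity of $f^+$: two edges $x_iy_j$ and $x_{i'}y_{j'}$ receive identical labels precisely when $a_i+d_j=a_{i'}+d_{j'}$. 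Since there are only finitely many vertices and edges, I would fix the integers $a_i$ and $d_j$ greedily, equivalently imposing a Sidon-type separation so that all pairwise sums $a_i+d_j$ are distinct, which is possible because $\mathbb{N}_0$ is infinite and only finitely many equalities must be avoided. This yields the desired weakly $k$-uniform IASI.

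The forward direction is essentially immediate once the sum-set bound is invoked, so I expect the main obstacle to lie in the converse: arranging the numerical labels so that the induced edge-label map $f^+$ stays injective while every edge cardinality is pinned to exactly $k$. Guaranteeing distinctness of all the sums $a_i+d_j$ is the one genuine, if routine, technical point, and everything else is bookkeeping.
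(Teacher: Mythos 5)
Your proof is correct and follows essentially the same route as the paper's (cited) argument: the forward direction rests on the forced mono-indexed endpoint of every edge, so the singleton and non-singleton vertices form a bipartition, and the converse labels one part by distinct singletons and the other by distinct $k$-element sets. Your only addition is making explicit, via the sum-set bound $|A+B|\ge |A|+|B|-1$ and the Sidon-type choice of the integers $a_i$ and $d_j$, two injectivity details that the paper leaves implicit.
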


\begin{proposition}\label{P-WIASI-WU1a}
A graph $G$ admits a UWIASI if and only if its sparing number is $0$.
\end{proposition}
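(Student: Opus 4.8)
The plan is to reduce the proposition to the characterization of bipartiteness, since both ``admitting a UWIASI'' and ``having sparing number $0$'' turn out to be equivalent to $G$ being bipartite. Recall that a UWIASI is precisely a weakly $k$-uniform IASI for some integer $k>1$. Thus Theorem \ref{T-WIASI-WU1} already supplies one half of the chain for free: $G$ admits a UWIASI if and only if $G$ is bipartite. Consequently the entire statement collapses to establishing the equivalence
\[
G \text{ is bipartite} \iff \varphi(G)=0,
\]
after which the two bi-implications can simply be chained.

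For the forward direction of this auxiliary equivalence I would invoke Observation \ref{O-WIASI4}, which asserts that the sparing number of any bipartite graph is $0$. For the converse I would appeal to Theorem \ref{T-WIASI9}: if $E'$ is a minimal subset of $E(G)$ with $G-E'$ bipartite, then $\varphi(G)=|E'|$; hence assuming $\varphi(G)=0$ forces $E'=\emptyset$, so that $G=G-E'$ is itself bipartite. Combining everything gives the result: $G$ admits a UWIASI $\iff$ $G$ is bipartite (Theorem \ref{T-WIASI-WU1}) $\iff$ $\varphi(G)=0$ (Observation \ref{O-WIASI4} together with Theorem \ref{T-WIASI9}).

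The only genuinely substantive point is the converse implication $\varphi(G)=0 \Rightarrow G \text{ bipartite}$, which I expect to be the main obstacle and which one should justify carefully rather than treat as routine. If one prefers not to cite Theorem \ref{T-WIASI9}, this can be argued directly from the structure of a WIASI realizing zero mono-indexed edges: every edge of a WIASI graph has at least one mono-indexed end vertex, while $\varphi(G)=0$ means no edge has both ends mono-indexed. Hence each edge has \emph{exactly} one mono-indexed end, so the set of mono-indexed vertices and the set of non-mono-indexed vertices constitute the two parts of a bipartition of $G$, confirming that $G$ is bipartite.
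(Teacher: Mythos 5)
Your proof is correct and takes essentially the same route as the paper, which likewise dismisses the proposition as an immediate consequence of Theorem \ref{T-WIASI-WU1} (UWIASI $\iff$ bipartite) and Observation \ref{O-WIASI4} (bipartite $\Rightarrow \varphi(G)=0$). The only difference is that you explicitly justify the converse implication $\varphi(G)=0 \Rightarrow G$ bipartite --- via Theorem \ref{T-WIASI9}, or directly by observing that every edge then joins a mono-indexed vertex to a non-mono-indexed one, yielding a bipartition --- a step the paper leaves implicit, so your version is if anything more complete.
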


The proof of this theorem is obvious as a consequence of Theorem \ref{T-WIASI-WU1} and Observation \ref{O-WIASI4}.

If $G$ admits a uniform WIASI, then one partition of its vertex can be labeled by non-singleton sets of cardinality $k$, for any positive value of $k$. Therefore, we have

\begin{proposition}\label{P-WIASI-WU3}
\cite{GS1} If a graph $G$ admits a uniform WIASI, then it has an arbitrary uniform IASI.  
\end{proposition}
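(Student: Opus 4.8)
The plan is to reduce the statement to Theorem \ref{T-WIASI-WU1}, which already characterises weakly $k$-uniform IASI graphs as exactly the bipartite graphs. First I would exploit the hypothesis: if $G$ admits a uniform WIASI, then by definition it admits a $k_0$-uniform WIASI for some fixed integer $k_0 > 1$. Applying the forward direction of Theorem \ref{T-WIASI-WU1} to this labeling immediately yields that $G$ is bipartite.

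Next, I would fix an arbitrary positive integer $k$ and let $(V_1, V_2)$ be a bipartition of $V(G)$. The idea is to build a $k$-uniform IASI by assigning distinct singleton sets to the vertices of $V_1$ and distinct $k$-element sets to the vertices of $V_2$. For any edge $uv$ with $u \in V_1$ and $v \in V_2$, the sum set $f^+(uv) = f(u) + f(v)$ is a translate of $f(v)$ by the single integer in $f(u)$, so $|f^+(uv)| = k = \max(|f(u)|, |f(v)|)$. This verifies both the cardinality condition of a WIASI and the $k$-uniformity in one stroke.

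The step I expect to be the main obstacle is establishing that the induced edge function $f^+$ is injective, so that the proposed labeling is genuinely an IASI and not merely a function satisfying the cardinality equation. This requires choosing the integers in the singleton labels of $V_1$ and the $k$-element labels of $V_2$ with sufficient separation so that no two distinct edges $uv$ and $u'v'$ yield the same translated set. I would resolve this either by a direct construction (drawing the singleton values from a rapidly growing sequence and spacing the $k$-element sets so their translates never coincide) or, more economically, by invoking the backward implication of Theorem \ref{T-WIASI-WU1}, which already guarantees that a bipartite graph admits a $k$-uniform WIASI for every $k > 1$; the degenerate case $k = 1$ is covered by labeling every vertex with a suitably chosen singleton.

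Finally, since a $k$-uniform WIASI is in particular a $k$-uniform IASI and $k$ was arbitrary, $G$ admits a $k$-uniform IASI for every positive integer $k$. By the definition of an arbitrary uniform IASI, this is precisely the desired conclusion, so the proposition follows.
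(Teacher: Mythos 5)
Your proof is correct and takes essentially the same route as the paper: the paper likewise deduces bipartiteness from the hypothesis via Theorem~\ref{T-WIASI-WU1} and then, for any $k$, labels one partition class by distinct singletons and the other by distinct $k$-element sets to obtain a $k$-uniform WIASI. Your explicit attention to the injectivity of $f^+$ and to the degenerate case $k=1$ merely fills in details the paper leaves implicit.
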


The hereditary property of uniform WIASI, like the WIASI of any graph structure, is established in the following theorem.

\begin{theorem}\label{T-WIASI-WU4}
\cite{GS1} Any subgraph of a uniform WIASI also admit a uniform WIASI. That is, the existence of uniform WIASI is hereditary.
\end{theorem}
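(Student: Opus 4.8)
The statement to prove is Theorem~\ref{T-WIASI-WU4}: any subgraph of a uniform WIASI graph also admits a uniform WIASI, so the property of admitting a uniform WIASI is hereditary.

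The plan is to reduce this to two facts that are already available in the excerpt. First, by Proposition~\ref{P-WIASI-WU1a}, a graph $G$ admits a uniform WIASI (equivalently a UWIASI) if and only if its sparing number $\varphi(G)$ is $0$; and by Observation~\ref{O-WIASI4} (together with Theorem~\ref{T-WIASI-WU1}) this happens precisely when $G$ is bipartite. So ``admits a uniform WIASI'' is, for $k>1$, the same condition as ``$G$ is bipartite.'' I would therefore first translate the claim into the purely structural assertion: if $G$ is bipartite, then every subgraph $H$ of $G$ is bipartite. This is the crux, but it is immediate, since a bipartition $(X,Y)$ of $V(G)$ restricts to a bipartition $(X\cap V(H),\,Y\cap V(H))$ of $V(H)$, and no edge of $H$ can join two vertices of the same part because no edge of $G$ does.

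Second, I would give the construction explicitly so that the conclusion is a genuine uniform WIASI on $H$ rather than merely an existence claim. Let $f$ be the uniform WIASI on $G$, say $k$-uniform with $k>1$. By the characterization following Theorem~\ref{T-WIASI-WU1}, one part of the bipartition of $G$ is labeled by singleton sets and the other part by distinct non-singleton sets of cardinality $k$. Define $f^\ast$ on $V(H)$ to be the restriction of $f$ to $V(H)$. By Proposition~\ref{P-WIASI2} (the hereditary property of ordinary WIASIs), $f^\ast$ is already a WIASI of $H$. It remains only to check uniformity: every edge $uv$ of $H$ is an edge of $G$, so $|(f^\ast)^+(uv)| = |f^+(uv)| = k$, which gives that $f^\ast$ is a $k$-uniform WIASI of $H$.

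I do not expect a genuine obstacle here, since both ingredients are already in hand: the main content is simply observing that the defining condition for a uniform WIASI is bipartiteness, a manifestly hereditary property, and that restriction preserves the common edge-cardinality $k$. The only point to handle carefully is the degenerate situation where $H$ has isolated vertices or no edges; in that case uniformity is vacuous, and one still obtains a valid labeling by assigning $V(H)$ distinct singleton or non-singleton sets as needed, so the statement holds trivially. Thus the entire argument runs: uniform WIASI $\Leftrightarrow$ bipartite $\Rightarrow$ every subgraph bipartite $\Rightarrow$ every subgraph admits a uniform WIASI, with the restriction $f^\ast$ of $f$ serving as the explicit witness.
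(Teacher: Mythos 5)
Your proposal is correct, and in fact it contains two complete proofs. The second one --- restricting $f$ to $V(H)$, citing Proposition~\ref{P-WIASI2} to conclude that the restriction $f^\ast$ is a WIASI of $H$, and then observing that every edge $uv$ of $H$ is an edge of $G$, so $|(f^\ast)^+(uv)|=|f^+(uv)|=k$ --- is exactly the paper's approach: Theorem~\ref{T-WIASI-WU4} is the uniform analogue of the hereditary Proposition~\ref{P-WIASI2} and is established by the same restriction device. Your first reduction, ``admits a uniform WIASI $\Leftrightarrow$ bipartite'' (via Theorem~\ref{T-WIASI-WU1}, Proposition~\ref{P-WIASI-WU1a} and Observation~\ref{O-WIASI4}) followed by the elementary fact that bipartiteness is hereditary, is a genuinely different and equally valid route; note only that it depends on the paper's convention that uniform WIASIs have $k>1$ (for $k=1$ every graph admits a $1$-uniform IASI, so the bipartite equivalence would fail there), a point you correctly flag. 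As for what each approach buys: the restriction argument is constructive and preserves the specific uniformity constant $k$ of the original labeling, while the bipartiteness route gives no explicit labels but yields slightly more, namely that $H$ admits a $k'$-uniform WIASI for \emph{every} $k'>1$, consistent with Proposition~\ref{P-WIASI-WU3} on arbitrary uniform IASIs. Either half of your write-up would stand alone; keeping both is redundant but harmless.
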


Equivalent to Theorem \ref{T-WIASI-WU4}, we can say that no supergraph of a non-uniform WIASI does not admit a uniform WIASI. 

We now proceed to check whether the graph operations and products of uniform WIASIs admit uniform WIASIs.

\begin{theorem}\label{T-WIASI-WU5}
The disjoint union of two uniform WIASI graphs also admits a uniform WIASI.
\end{theorem}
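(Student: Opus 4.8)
The plan is to reduce the statement to the bipartite characterisation of uniform WIASIs already established in Theorem \ref{T-WIASI-WU1}. For $k>1$ that theorem says a graph admits a weakly $k$-uniform IASI precisely when it is bipartite, so uniformity and bipartiteness are interchangeable in this setting. First I would invoke Theorem \ref{T-WIASI-WU1} on each factor: since $G_1$ and $G_2$ each admit a uniform WIASI, each of them is bipartite. The disjoint union of two bipartite graphs is again bipartite (take the bipartition of each factor and amalgamate the two classes componentwise), so $G_1\cup G_2$ is bipartite. Applying Theorem \ref{T-WIASI-WU1} once more in the reverse direction then produces a weakly $k$-uniform IASI on $G_1\cup G_2$, which is exactly the required uniform WIASI.

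An equivalent and equally short route proceeds through the sparing number. By Proposition \ref{P-WIASI-WU1a}, a graph admits a UWIASI if and only if its sparing number is $0$; hence $\varphi(G_1)=\varphi(G_2)=0$. A disjoint union is in particular edge disjoint, so Corollary \ref{C-WIASI-WO-U3} gives $\varphi(G_1\cup G_2)=\varphi(G_1)+\varphi(G_2)=0$, and Proposition \ref{P-WIASI-WU1a} applied in the other direction delivers a UWIASI on the union. Either argument settles the theorem in a couple of lines.

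If instead one wants an explicit labelling rather than an existence argument, the natural attempt is to fix a common cardinality $k$ — possible because Proposition \ref{P-WIASI-WU3} guarantees each factor carries a $k$-uniform WIASI for every $k$ — to choose $k$-uniform WIASIs $f_1$ of $G_1$ and $f_2$ of $G_2$, and then to amalgamate them as in Theorem \ref{T-WISASI-WO-U1}. Here the only real difficulty is injectivity: since the two vertex sets are disjoint, $f_1$ and $f_2$ may still assign the same set to a vertex of $G_1$ and a vertex of $G_2$, and the induced edge labels may likewise clash, so the naive amalgamation need not be a set-indexer at all.

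The remedy, and the one technical point worth checking, is to translate: replace each label $f_2(v)$ by $f_2(v)+\{N\}=\{a+N : a\in f_2(v)\}$ for a constant $N$ larger than every integer occurring in the labels of $G_1$. Translation by a fixed constant preserves every cardinality, so each $|f_2(v)+\{N\}|$ and each edge sumset cardinality is unchanged and the labelling stays $k$-uniform on the $G_2$-part; at the same time every element of a $G_2$-label now exceeds every element of a $G_1$-label, forcing both the vertex labels and the induced edge labels of the two components to be disjoint, whence the combined map is injective. I expect this collision-avoidance step to be the main obstacle, while the uniformity of the edge sumsets transfers from the factors automatically.
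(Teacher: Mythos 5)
Your proposal is correct, and your third route is essentially the paper's own proof --- except that yours is more careful. The paper proves Theorem \ref{T-WIASI-WU5} precisely by the piecewise amalgamation of $f_1$ and $f_2$, citing the construction of Theorem \ref{T-WISASI-WO-U1}, and then asserts that the result is ``clearly'' a UWIASI; it addresses neither of the two points you isolate. First, the factors may a priori carry uniform WIASIs of different uniformities $k_1\neq k_2$, in which case the amalgamation is a WIASI but not a uniform one, so one must first fix a common $k$ --- which, as you note, Proposition \ref{P-WIASI-WU3} licenses. Second, on a \emph{disjoint} union the labelings $f_1$ and $f_2$ are chosen independently and may assign the same set-label to vertices in different components (the paper's remark about common vertices forcing $f_1=f_2$ only covers non-disjoint unions), so the naive amalgamation need not be injective on vertices or edges. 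Your translation repair is sound: replacing $f_2(v)$ by $f_2(v)+\{N\}$ with $N$ exceeding every integer in the $G_1$-labels preserves all cardinalities (hence $k$-uniformity and the weak condition), puts every element of a translated $G_2$ vertex label at or above $N$ while $G_1$ vertex labels lie below $N$, and puts every element of a translated $G_2$ edge label at or above $2N$ while $G_1$ edge labels lie strictly below $2N$, so the combined vertex and edge maps are globally injective. Your first two routes --- via the bipartite characterization of Theorem \ref{T-WIASI-WU1} (disjoint unions of bipartite graphs are bipartite), or via Proposition \ref{P-WIASI-WU1a} together with Corollary \ref{C-WIASI-WO-U3} --- are shorter and equally valid; they trade the explicit labelling for a pure existence argument. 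In short, there is no gap on your side; the gap, such as it is, lies in the paper's one-line proof, and your explicit version closes it.
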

\begin{proof}
The proof similar to the proof of Theorem \ref{T-WISASI-WO-U1} given in \cite{GS4}. If $f_1$ and $f_2$ are the UWIASIs of the graphs $G_1$ and $G_2$ respectively, then the function $f$ defined on $G_1\cup G_2$, by
\begin{equation*}
f(v)=
\begin{cases}
f_1(v) & ~~\text{if}~~ v\in V(G_1)\\
f_2(v) & ~~\text{if}~~ v\in V(G_2).
\end{cases}
\end{equation*}
Clearly, $f$ is a UWIASI on $G=G_1\cup G_2$.
\end{proof}

In view of Theorem \ref{T-WIASI-WU5}, the sparing number of the disjoint union of two UWIASI graphs is $0$. It can be noticed that the union (need not be the disjoint union) of two UWIASI graphs, admits a UWIASI if, among the common vertices of $G_1$ and $G_2$, no two vertices in the same partition of $G_1$ are adjacent in $G_2$ and vice versa.

Next, we proceed to determine the sparing number of the join of two UWIASI graphs.

\begin{theorem}\label{T-WIASI-WU6}
Let the (bipartite) graphs $G_1(X_1,Y_1,E_1)$ and $G_2(X_2,Y_2,E_2)$ admit UWIASI, where $|X_i|=m_i$, $|Y_i|=n_i$ and  $|E(G_i)|=q_i$ for $i=1,2$. Then, the sparing number of $G_1+G_2$ is

\begin{equation*}
\varphi(G_1+G_2)=
\begin{cases}
q_1+m_2(m_1+n_1) & ~~\text{if}~~ m_1+n_1 \le m_2+n_2, m_2\le n_2\\
q_1+n_2(m_1+n_1) & ~~\text{if}~~ m_1+n_1 \le m_2+n_2, m_2\ge n_2\\
q_2+m_1(m_2+n_2) & ~~\text{if}~~ m_1+n_1 \ge m_2+n_2, m_1\le n_1\\
q_2+n_1(m_2+n_2) & ~~\text{if}~~ m_1+n_1 \ge m_2+n_2, m_1\ge n_1.
\end{cases}
\end{equation*}
\end{theorem}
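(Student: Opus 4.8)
The plan is to convert the statement into an extremal question about independent sets and then exhibit an explicit optimal labeling. First I would recall the mechanism behind the sparing number: in any WIASI the non-mono-indexed vertices form an independent set (every edge needs a mono-indexed end), and an edge is mono-indexed exactly when both of its ends are. Hence, if $I$ is the set of vertices carrying non-singleton labels, the mono-indexed edges are precisely the edges of $G$ lying inside $V(G)\setminus I$, so their number equals $|E(G)|-\sum_{v\in I}d(v)$ (the sum counts, without repetition, the edges meeting $I$ because $I$ is independent). Minimising the number of mono-indexed edges is therefore the same as choosing an independent set $I$ maximising $\sum_{v\in I}d(v)$, which is exactly the degree form of the sparing number already recorded in Theorem~\ref{T-WIASI14a} and the independence-number theorem that follows it. Since $G_1$ and $G_2$ admit UWIASIs they are bipartite (Theorem~\ref{T-WIASI-WU1}), and bipartiteness is the only feature of the factors I would use.

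Next I would analyse the independent sets of $G=G_1+G_2$. Because every vertex of $V(G_1)$ is adjacent to every vertex of $V(G_2)$, no independent set can meet both $V(G_1)$ and $V(G_2)$; thus $I\subseteq V(G_1)$ or $I\subseteq V(G_2)$, and in either case $I$ is independent in the corresponding bipartite factor. In $G$ the degree of a vertex $v\in V(G_1)$ is $d_{G_1}(v)+(m_2+n_2)$ and that of $v\in V(G_2)$ is $d_{G_2}(v)+(m_1+n_1)$, so the additive boost rewards taking $I$ as large as possible. The natural extremal choices are the four partite classes $X_1,Y_1,X_2,Y_2$. For instance, labelling the class $Y_2$ by distinct non-singleton sets of one fixed cardinality covers all $q_2$ edges of $G_2$ together with the $n_2(m_1+n_1)$ join edges issuing from $Y_2$, leaving $|E(G)|-q_2-n_2(m_1+n_1)=q_1+m_2(m_1+n_1)$ mono-indexed edges, which is the first case. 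The other three cases arise by instead expanding $X_2$, $Y_1$ or $X_1$, each computation being identical in form; within a fixed factor one always prefers its larger partite class (whence the conditions comparing $m_2$ with $n_2$ and $m_1$ with $n_1$), while the comparison of $m_1+n_1$ with $m_2+n_2$ is meant to record which factor supplies the class to be expanded.

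The main obstacle is the optimality argument, not the arithmetic. I would have to establish two things: that a whole partite class beats every other independent set, and that the factor selected by the stated size comparison really yields the smaller count. The first point is the delicate one: a sparse bipartite factor can possess an independent set larger than $\max(m_i,n_i)$ — precisely when its smaller side is not saturated by a matching, by K\"onig's theorem — and such a set may carry a larger degree-sum than any full partite class, so one must argue this cannot improve on the claimed value or else fold it into the hypotheses. The second point reduces to comparing the two candidate degree-sums $q_1+\max(m_1,n_1)(m_2+n_2)$ and $q_2+\max(m_2,n_2)(m_1+n_1)$ and checking that the size comparison selects the larger of them; I would verify this on small instances, since the bare inequality between $m_1+n_1$ and $m_2+n_2$ does not by itself dominate the $q_i$ terms and may need strengthening. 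Finally I would confirm admissibility: labelling one partite class by sufficiently spread-out non-singleton sets and every other vertex by a distinct singleton makes each edge weak and keeps both $f$ and $f^{+}$ injective, so the constructed assignment is a genuine WIASI attaining the stated count.
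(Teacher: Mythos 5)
Your reduction of the problem to maximising $\sum_{v\in I}d_G(v)$ over independent sets $I$ of the join is sound, and your candidate labelings coincide with the paper's own construction: the paper labels every vertex of the smaller-order factor by singletons, labels the larger partite class of the other factor by non-singleton sets, counts the resulting mono-indexed edges, and then simply asserts ``the number of mono-indexed edges thus obtained is the minimum'' with no supporting argument. So the optimality step you isolate as the main obstacle is precisely the step the paper skips; nothing in the paper's proof would fill the gap you identified.

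More importantly, both of your reservations are fatal rather than unfinished business: the theorem is false as printed, so the gap cannot be closed. For your first concern, take $G_1=K_2$ and $G_2$ the double star obtained from adjacent vertices $c_1,c_2$ by attaching two pendant leaves to each. Then $m_1=n_1=1$, $q_1=1$, $m_2=n_2=3$, $q_2=5$, and the formula predicts $\varphi(G_1+G_2)=q_1+m_2(m_1+n_1)=7$; but labelling the four leaves by distinct non-singleton sets --- an independent set of size $4>\max(m_2,n_2)$, which exists exactly because $\nu(G_2)=2<\min(m_2,n_2)$, as your K\"onig remark anticipates --- leaves only the six edges among $\{u_1,u_2,c_1,c_2\}$ mono-indexed, so $\varphi(G_1+G_2)\le 6$. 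For your second concern, take $G_1=K_{3,3}$ and $G_2=P_8$: then $m_1+n_1=6\le 8=m_2+n_2$ and the predicted value is $q_1+m_2(m_1+n_1)=9+4\cdot 6=33$, whereas placing the non-singleton class inside $G_1$ yields $q_2+m_1(m_2+n_2)=7+3\cdot 8=31$; here both factors have perfect matchings, so the failure lies in the case condition itself --- as you suspected, comparing $m_1+n_1$ with $m_2+n_2$ cannot dominate the $q_i$ terms. The correct general statement is the one your reformulation gives, $\varphi(G_1+G_2)=|E(G_1+G_2)|-\max_I\sum_{v\in I}d(v)$ over independent sets $I$, each necessarily contained in one factor; the displayed formula recovers this only under extra hypotheses (for instance, complete bipartite factors), and your refusal to paper over the optimality step is the correct conclusion.
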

\begin{proof}
Let $G_1+G_2$ is not a bipartite graph and hence by Theorem \ref{T-WIASI-WU1}, $G_1+G_2$ does not admit a UWIASI and contains some mono-indexed edges. By Theorem \ref{T-WIASI-WO-J1}, either $G_1$ or $G_2$ must be $1$-uniform. 

\noindent {\em Case 1:} Let $G_1$ has the minimum number of vertices. That is, let $m_1+n_1 \le m_2+n_2$. Since we require the minimum number of mono-indexed edges, label all the vertices of $G_1$ by distinct singleton sets. Then all $q_1$ edges of $G_1$ are mono-indexed. The vertices of one partition in $G_2$ can be labeled non-singleton sets. 

\noindent {\em Subcase 1.1:} If $m_2\le n_2$, then label the vertices in $X_2$ by singleton sets. Then, the mono-indexed edges of $G_1+G_2$ are the edges of $G_1$ and the edges between $X_2$ and $V(G_1)$. Therefore, the number of mono-indexed edges thus obtained is the minimum. Hence, $\varphi(G_1+G_2)=q_1+m_2(m_1+n_1)$.

\noindent {\em Subcase 1.2:} If $m_2\ge n_2$, then label the vertices in $Y_2$ by singleton sets so that we get minimum number of mono-indexed edges between $G_1$ and $G_2$. Then, the mono-indexed edges of $G_1+G_2$ are the edges of $G_1$ and the edges between $Y_2$ and $V(G_1)$. Hence, $\varphi(G_1+G_2)=q_1+n_2(m_1+n_1)$.

\noindent {\em Case 2:} If $G_2$ has the minimum number of vertices, (that is, if $m_1+n_1 \le m_2+n_2$), label all the vertices of $G_2$ by distinct singleton sets. Then, all $q_2$ edges of $G_1$ are mono-indexed. The vertices of one partition in $G_1$ can be labeled non-singleton sets. 

\noindent {\em Subcase 2.1:} If $m_1\le n_1$, then label the vertices in $X_1$ by singleton sets. Then, the mono-indexed edges of $G_1+G_2$ are the edges of $G_2$ and the edges between $X_1$ and $V(G_2)$. Therefore, $\varphi(G_1+G_2)=q_2+m_1(m_2+n_2)$.

\noindent {\em Subcase 2.2:} If $m_1\ge n_1$, then label the vertices in $Y_1$ by singleton sets. Then, the mono-indexed edges of $G_1+G_2$ are the edges of $G_2$ and the edges between $Y_1$ and $V(G_2)$. Therefore, $\varphi(G_1+G_2)=q_2+n_1(m_2+n_2)$.
\end{proof}

Next, we go through admissibility of UWIASI by the cartesian product of two UWIASI graphs and estimates its sparing number.

\begin{proposition}\label{T-WIASI-WU7}
The cartesian product of two UWIASI graphs also admits a UWIASI and its sparing number is $0$.
\end{proposition}
\begin{proof}
Let $G_1$ and $G_2$ be two UWIASI graphs. Then , by Theorem \ref{T-WIASI-WU1}, both $G_1$ and $G_2$ are bipartite. Since the cartesian product of two bipartite graphs is bipartite, $G_1\times G_2$ is also bipartite. Therefore, by Theorem \ref{T-WIASI-WU1}, $G_1\times G_2$ admits a UWIASI. Hence, by Proposition \ref{P-WIASI-WU1a}, the sparing number of $G_1\times G_2$ is $0$.
\end{proof}

The following theorems establish the admissibility of UWIASIs by certain associated graphs of WIASI graphs.

The following theorem is on the subdivisions of WIASI graphs.

\begin{theorem}\label{T-WIASI-WU8}
The complete subdivision of a (WIASI) graph admits a uniform WIASI.
\end{theorem}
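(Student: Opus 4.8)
The plan is to reduce the statement to Theorem \ref{T-WIASI-WU1}, which characterizes the graphs admitting a weakly $k$-uniform IASI (for any integer $k>1$) as precisely the bipartite graphs. Hence the entire task collapses to verifying a single structural fact: that the complete subdivision of an arbitrary graph $G$ is bipartite. Since every graph admits a WIASI by Theorem \ref{T-WIASI6}, the parenthetical hypothesis on $G$ imposes no restriction, so I would prove the statement for an arbitrary $G$.

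First I would describe the complete subdivision $S$ of $G$ explicitly. Its vertex set is $V(G)\cup\{w_e : e\in E(G)\}$, where for each edge $e=uv$ of $G$ a new vertex $w_e$ is introduced and $e$ is replaced by the two edges $uw_e$ and $w_ev$. The key observation, which I would state immediately, is that every edge of $S$ joins an original vertex of $G$ to a subdivision vertex; no edge of $S$ has both ends among the original vertices, nor both ends among the subdivision vertices.

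Next I would exhibit the bipartition directly: set $X=V(G)$ and $Y=\{w_e : e\in E(G)\}$. By the observation above, each edge of $S$ has exactly one end in $X$ and one end in $Y$, so no two vertices lying in the same part are adjacent. Therefore $(X,Y)$ is a valid bipartition and $S$ is bipartite. Applying Theorem \ref{T-WIASI-WU1}, since $S$ is bipartite it admits a weakly $k$-uniform IASI for every integer $k>1$; in particular, $S$ admits a uniform WIASI, which is exactly the claim. (Equivalently, one could route the conclusion through Observation \ref{O-WIASI4} and Proposition \ref{P-WIASI-WU1a}, noting that a bipartite graph has sparing number $0$ and hence admits a UWIASI.)

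There is essentially no obstacle here: the only point that requires any care is the correct identification of the bipartition, after which the bipartiteness of $S$ is transparent and the result follows at once from the earlier characterization. The concrete uniform labeling can even be made explicit if desired, by assigning distinct singletons to the vertices of $X$ and distinct non-singleton sets of a fixed cardinality $k$ to the vertices of $Y$, but invoking Theorem \ref{T-WIASI-WU1} makes this unnecessary.
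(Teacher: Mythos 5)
Your proposal is correct and follows exactly the paper's route: the paper likewise dispatches the theorem by noting that the complete subdivision of any graph is bipartite and invoking the characterization of uniform WIASI graphs (Theorem \ref{T-WIASI-WU1}). Your explicit bipartition $\bigl(V(G),\{w_e : e\in E(G)\}\bigr)$ merely fills in the detail the paper leaves implicit.
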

The proof of this theorem follows from the fact that the complete subdivision of any graph is a bipartite graph.

A {\em super subdivision} graph of a graph $G$ is the graph obtained by replacing every edge of $G$ by a complete bipartite graph $K_{2,m}$. It is clear, from its definition,  that a super subdivision of a graph is bipartite. Therefore, we have

\begin{theorem}\label{T-WIASI-WU9}
The super subdivision of a (WIASI) graph admits a uniform WIASI.
\end{theorem}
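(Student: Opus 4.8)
The plan is to reduce the claim to the bipartiteness characterisation of uniform WIASI graphs supplied by Theorem \ref{T-WIASI-WU1}, which asserts that for any positive integer $k>1$ a graph admits a weakly $k$-uniform IASI if and only if it is bipartite. Thus it suffices to prove that the super subdivision of an arbitrary graph $G$ is bipartite; the conclusion that it admits a uniform WIASI then follows at once. This mirrors exactly the strategy used for Theorem \ref{T-WIASI-WU8} on complete subdivisions.

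To exhibit a bipartition, I would take the two colour classes to be $V(G)$, the set of original vertices, on one side, and the set $W$ of all newly introduced vertices on the other side. Recall that each edge $e=uv$ of $G$ is replaced by a copy of $K_{2,m}$ in which the two endpoints $u,v$ constitute the part of size $2$ and $m$ fresh vertices constitute the part of size $m$, every fresh vertex being joined to both $u$ and $v$. The key step is to verify that every edge of the super subdivision joins a vertex of $V(G)$ to a vertex of $W$: within each copy of $K_{2,m}$ the only edges run between the $2$-side, which consists of original vertices, and the $m$-side, which consists of new vertices, so no two original vertices are adjacent and no two new vertices are adjacent within a single gadget. Since distinct copies of $K_{2,m}$ share only original vertices and never any new vertex, no adjacency between two members of $W$ can arise across different gadgets either.

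Consequently $(V(G),W)$ is a valid bipartition, and the super subdivision is bipartite. Applying Theorem \ref{T-WIASI-WU1} with any $k>1$, for instance by labelling one colour class with distinct singleton sets and the other with distinct non-singleton sets of common cardinality $k$, yields a weakly $k$-uniform IASI, that is, a uniform WIASI. I do not anticipate any genuine obstacle: the entire content lies in the bipartiteness verification, and that is forced by the structure of $K_{2,m}$. The only point demanding a moment of care is confirming that new vertices belonging to different edge-gadgets are never adjacent, which holds because each new vertex is incident solely to the two endpoints of the single edge it subdivides.
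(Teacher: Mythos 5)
Your proof is correct and follows essentially the same route as the paper, which also deduces the result from the bipartiteness of the super subdivision together with Theorem \ref{T-WIASI-WU1}; your explicit verification that $(V(G),W)$ is a bipartition simply fills in a detail the paper treats as clear from the definition.
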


\begin{theorem}\label{T-WIASI-WU10}
The graph obtained by contracting an edge of a UWIASI graph does not admit a UWIASI.
\end{theorem}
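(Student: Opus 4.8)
The plan is to avoid the set-labels entirely and reduce the statement to a purely structural question about bipartiteness. By Theorem \ref{T-WIASI-WU1}, a graph admits a UWIASI if and only if it is bipartite, and equivalently, by Proposition \ref{P-WIASI-WU1a}, if and only if its sparing number is $0$. Hence the assertion that the contraction $G/e$ admits no UWIASI is the same as the assertion that $G/e$ is not bipartite, i.e. that $G/e$ contains an odd cycle. I would therefore translate the whole argument into the language of cycle parity and produce a single odd cycle in the contracted graph.

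Next I would fix the contracted edge $e = uv$ and use the bipartition $(X,Y)$ of the UWIASI graph $G$ that Theorem \ref{T-WIASI-WU1} guarantees. Since $uv \in E(G)$ and $G$ is bipartite, $u$ and $v$ lie in opposite parts, say $u \in X$ and $v \in Y$; let $w$ be the vertex of $G/e$ obtained by identifying $u$ and $v$. The key observation is that any $u$--$v$ path $P$ in $G-e$ necessarily has odd length, because every path joining the two parts of a bipartite graph alternates sides, and the shortest such alternative path has length at least $3$ (a common neighbour of $u$ and $v$ would have to lie in both $X$ and $Y$). Upon contraction the two endpoints of $P$ coincide at $w$, so $P$ becomes a genuine odd cycle in $G/e$. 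Exhibiting one such path $P$ certifies that $G/e$ is non-bipartite, and the reduction in the first step then finishes the proof.

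The main obstacle is precisely the existence of that path $P$, that is, the implicit requirement that $e$ not be a bridge. If $e$ is a bridge of $G$, then $u$ and $v$ fall into different components of $G-e$, no $u$--$v$ path survives, no odd cycle is created, and $G/e$ stays bipartite and hence continues to admit a UWIASI; the smallest witness is $P_3 = a\,b\,c$, whose contraction of $ab$ is a single edge. Thus the statement as written is false for bridge edges, and the odd-cycle construction goes through only when $e$ lies on some cycle of $G$ (for instance when $G$ is $2$-edge-connected). I would accordingly carry out the argument above under the hypothesis that $e$ is a non-bridge edge, where the odd cycle is immediate, and flag the bridge case as the genuine gap that the blanket statement overlooks.
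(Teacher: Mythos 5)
Your reduction is the same as the paper's: the paper proves this theorem in one line by invoking Theorem \ref{T-WIASI-WU1} and asserting that ``the graph obtained by contracting an edge of a bipartite graph will not be a bipartite graph.'' What you add is the actual mechanism the paper leaves unstated --- with $u\in X$, $v\in Y$, any $u$--$v$ path in $G-e$ has odd length at least $3$, and identifying its endpoints turns it into an odd cycle in $G/e$ --- and that argument is sound as far as it goes.

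The more important point is that your bridge objection is correct, and the defect it exposes lies in the paper, not in your proof. If $e=uv$ is a bridge, then $G-e$ splits into a component containing $u$ and a component containing $v$; flipping the bipartition classes on the second component places $u$ and $v$ on the same side, so after identification $G/e$ is still bipartite, and by Theorem \ref{T-WIASI-WU1} (or Proposition \ref{P-WIASI-WU1a}) it still admits a UWIASI. Trees give an infinite family of counterexamples, your $P_3$ being the smallest. So the theorem as stated, and the paper's blanket one-line justification, are both false; the correct statement is that contracting an edge that lies on a cycle (equivalently, a non-bridge edge) of a UWIASI graph yields a graph admitting no UWIASI, since every non-bridge edge of a bipartite graph lies on an even cycle, which contracts to an odd cycle. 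Your proposal proves exactly this corrected statement and correctly isolates the bridge case as the genuine gap.
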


The graph obtained by contracting an edge of a bipartite graph will not be a bipartite graph and hence the proof is evident.

\section{Scope for Further Studies}
So far, we have reviewed the studies about weak integer additive set-indexers of certain graphs and their sparing numbers. We also propose some new results in this paper. Certain problems regarding the admissibility of weak IASI by various other graph classes, graph operations and graph products and finding the corresponding sparing numbers are still open.

More properties and characteristics of different types of IASIs, both uniform and non-uniform, are yet to be investigated. There are some more open problems regarding the necessary and sufficient conditions for various graphs and graph classes to admit certain IASIs.

\section*{Acknowledgement}
The authors dedicate this work to the memory of Professor Belamannu Devadas Acharya who introduced the concept of integer additive set-indexers of graphs.

\end{document}